\theoremstyle{plain} 
\newtheorem{theorem}{\indent\sc Theorem}[section]
\newtheorem{lemma}[theorem]{\indent\sc Lemma}
\newtheorem{corollary}[theorem]{\indent\sc Corollary}
\newtheorem{proposition}[theorem]{\indent\sc Proposition}
\theoremstyle{definition} 
\newtheorem{definition}[theorem]{\indent\sc Definition}
\newtheorem{remark}[theorem]{\indent\sc Remark}
\newtheorem{example}[theorem]{\indent\sc Example}
\newtheorem{observation}[theorem]{\indent\sc Observation}
\begin{document}

\title[Minimal singular metrics of a line bundle]
{Minimal singular metrics of a line bundle admitting no Zariski decomposition} 

\author[T. Koike]{Takayuki Koike} 

\subjclass[2010]{ 
Primary 32J25; Secondary 32J27, 14C20. 
}
%
\keywords{ 
Minimal singular metrics, Zariski decompositions, Nakayama example, 
Kiselman numbers, Lelong numbers, non-nef loci, 
multiplier ideal sheaves. 
}
\address{
Mathematical Institute \endgraf
The University of Tokyo \endgraf
3-8-1 Komaba, Tokyo \endgraf
Japan
}
\email{tkoike@ms.u-tokyo.ac.jp}

\maketitle

\begin{abstract}
We give a concrete expression of a minimal singular metric on a big line bundle 
on a compact K\"ahler manifold which is the total space of a toric bundle over a complex torus. 
In this class of manifolds, Nakayama constructed examples which have line bundles admitting no Zariski decomposition 
even after modifications. 
As an application, we discuss the Zariski closedness of non-nef loci. 
\end{abstract}

\section{Introduction}
We consider the positivity of a big holomorphic line bundle over a compact K\"ahler complex manifold. 
Especially, we are interested in the information related to the obstruction to the nef-ness of the line bundle. 
Our main result is the explicit construction of a minimal singular metric, or a singular hermitian metric on $L$ with minimal singularities, of a big line bundle $L$ 
when the manifold $X$ is the total space of a smooth projective toric bundle over a complex torus (Theorem \ref{main_theorem}). 
\par
In order to state our main theorem in general form, we have to define some terminology. 
So in this section, we introduce our result only when $(X, L)$ is a Nakayama example (\cite[IV \S 2.6]{N}), 
which is one of the most important examples when we study the obstruction to the nef-ness of the line bundle, since it admits no Zariski decomposition even after modifications. 
Let $E_1$ be a sufficiently general smooth elliptic curve such as $\mathbb{C}/(\mathbb{Z}+(\pi+\sqrt{-1})\mathbb{Z})$, $E_2$ a copy of $E_1$, 
and $z_j$ a coordinate of $E_j$ for $j=1, 2$. 
Let us fix an integer $a>1$, points $p_1\in E_1, p_2\in E_2$, and define the three line bundles $L_j (j=0, 1, 2)$ over $V=E_1\times E_2$ by
\begin{eqnarray}
L_0&=&\mathcal{O}_V(2F_1-4F_2+2\Delta), \nonumber \\
L_1&=&\mathcal{O}_V((a-1)F_1+(a-1)F_2+(a+2)\Delta),  \nonumber \\
L_2&=&\mathcal{O}_V((a+3)F_1+(a-3)F_2+a\Delta), \nonumber
\end{eqnarray}
where $F_1$ stands for the prime divisor $\{p_1\}\times E_2\subset V$, $F_2$ stands for the prime divisor $E_1\times \{p_2\}\subset V$, and
$\Delta$ stands for the prime divisor $\{(x, y)\in E\times E\mid x=y\}$.
Then there exists a hermitian metric $h_j$ over $L_j$ whose curvature tensor $\Theta_{h_j}\in c_1(L_j)$ is a harmonic form and
each $h_j$ can be denoted as $h_{j}(\xi, \eta)_{(z_1, z_2)}=e^{-\varphi_j(z_1, z_2)}\xi\overline{\eta}$, where
\begin{eqnarray}
\varphi_0(z_1, z_2)&=&
(z_1, z_2)
\left( \begin{array}{cc}
4 & -2 \\
-2 & -2
\end{array} \right)
\overline{
\left(\hskip-2mm \begin{array}{c}
z_1\\
z_2
\end{array} \hskip-2mm\right)
}\nonumber \\
\varphi_1(z_1, z_2)&=&
(z_1, z_2)
\left( \begin{array}{cc}
2a+1 & -(a+2) \\
-(a+2) & 2a+1
\end{array} \right)
\overline{
\left( \hskip-2mm\begin{array}{c}
z_1\\
z_2
\end{array} \hskip-2mm\right)
}\nonumber \\
\varphi_2(z_1, z_2)&=&
(z_1, z_2)
\left( \begin{array}{cc}
2a+3 & -a \\
-a & 2a-3
\end{array} \right)
\overline{
\left( \hskip-2mm\begin{array}{c}
z_1\\
z_2
\end{array} \hskip-2mm\right)}, \nonumber
\end{eqnarray}
on each small open subset $U$ of $V$ with appropriate local trivialization $s^j$ of $L_j$ on $U$. 
Let us define the variety $X$ as the total space of a $\mathbb{P}^2$-bundle $\pi\colon \mathbb{P}(L_0\oplus L_1\oplus L_2)\to V$ over $V$ and $L=\mathcal{O}_{\mathbb{P}(L_0\oplus L_1\oplus L_2)}(1)$. 
Let $U$ be a sufficiently small open set of $V$.  
We use the function 
\begin{eqnarray}
([x_0; x_1; x_2], z_1, z_2)&\mapsto&[x_0s_0(z_1, z_2); x_1s_1(z_1, z_2); x_2s_2(z_1, z_2)]\nonumber \\
&\in&(\mathbb{C}s^0(z_1, z_2)\oplus\mathbb{C}s^1(z_1, z_2)\oplus\mathbb{C}s^2(z_1, z_2))^*/\mathbb{C}^*=\pi^{-1}(z_1, z_2)\nonumber
\end{eqnarray}
as a coordinates system on $\pi^{-1}(U)$, where $s_j$ is a dual section of $s^j$. 
Using these coordinates, our main result applied to this example can be stated as follows: 

\begin{theorem}
Let $(X, L)$ be the above example, which is introduced by Nakayama \cite{N} and admits no Zariski decomposition even after modifications. 
There is a minimal singular metric $h_{\rm min}$ on $L$ 
whose local weight function $\psi$ is continuous on $X\setminus\mathbb{P}(L_0)$ and 
is written as 
\[\psi=\log\max_{(\alpha, \beta)\in H}\,(|x_1|^{2\alpha}\cdot|x_2|^{2\beta})+O(1)\]
at each point in $\mathbb{P}(L_0)$ with local coordinates $(x_1, x_2, z_1, z_2)=([1; x_1; x_2], z_1, z_2)$, 
where $H=\{(\alpha, \beta)\in\mathbb{R}^2\mid\alpha, \beta\geq 0,\ a^2(\alpha+\beta)^2=(1-\alpha)^2+(1-\beta)^2\}$. 
\end{theorem}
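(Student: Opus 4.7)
My approach is to specialize the paper's main theorem (Theorem \ref{main_theorem}) to this specific Nakayama example and perform the resulting algebraic computation to identify the curve $H$. First, I recognize $X = \mathbb{P}(L_0 \oplus L_1 \oplus L_2) \to V$ as a smooth projective toric bundle (a $\mathbb{P}^2$-bundle) over the complex torus $V = E_1 \times E_2$, with fiber polytope for $L = \mathcal{O}(1)$ the standard simplex $\Sigma = \{(\alpha, \beta) \in \mathbb{R}^2_{\geq 0} : \alpha + \beta \leq 1\}$. The decomposition $\pi_* L^m = \bigoplus_{i+j+k=m} L_0^i L_1^j L_2^k$ assigns to each lattice point $(j/m, k/m) \in \Sigma$ the $\mathbb{R}$-line bundle $L(\alpha, \beta) = L_0^{1-\alpha-\beta} L_1^\alpha L_2^\beta$ on $V$, carrying a translation-invariant metric with constant curvature matrix $M(\alpha, \beta) = (1-\alpha-\beta) M_0 + \alpha M_1 + \beta M_2$. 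Sections coming from $H^0(V, L(\alpha,\beta)^{\otimes m})$ produce, in the chart $x_0 = 1$, local weight contributions of the form $\alpha \log|x_1|^2 + \beta \log|x_2|^2 + O(1)$.

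By the main theorem, the local weight of $h_{\min}$ in that chart should be, up to an $O(1)$ bounded term, the upper envelope
\[
\psi(x_1, x_2, z) \sim \sup_{(\alpha, \beta) \in \mathcal{A}} \bigl( \alpha \log|x_1|^2 + \beta \log|x_2|^2 + \phi_{\alpha, \beta}(z) \bigr),
\]
where $\mathcal{A} \subset \Sigma$ is the admissible region produced by the main theorem and each $\phi_{\alpha, \beta}$ is a translation-invariant psh weight on $L(\alpha, \beta)$. Because the $\phi_{\alpha, \beta}$ are locally bounded on a Zariski-open subset of $V$, the envelope reduces (up to a $z$-independent bounded term) to the support function of $\mathcal{A}$ evaluated at the covector $(\log|x_1|^2, \log|x_2|^2)$. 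For $|x_1|, |x_2| \leq 1$ this covector lies in the third quadrant, so the supremum is realized on the outer boundary $\partial \mathcal{A}$, and the task becomes identifying $\partial \mathcal{A}$ with the curve $H$ by direct algebraic manipulation using the explicit matrices $M_0, M_1, M_2$.

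Continuity of $\psi$ on $X \setminus \mathbb{P}(L_0)$ then follows from the observation that the envelope's supremum is locally attained wherever $(x_1, x_2) \neq (0, 0)$, i.e., off $\mathbb{P}(L_0)$; at the other boundary strata $\{x_1 = 0\}$ or $\{x_2 = 0\}$ alone, only one of the logarithmic factors blows up and the support-function analysis still yields a locally bounded envelope after comparing the active part of $\partial \mathcal{A}$. The main obstacle will be the algebraic identification $\partial \mathcal{A} = H$: the curve $H$ is symmetric under $\alpha \leftrightarrow \beta$, yet the input matrices $M_1$ and $M_2$ are not invariant under this swap, so the conic equation $a^2(\alpha+\beta)^2 = (1-\alpha)^2 + (1-\beta)^2$ must emerge from a nontrivial cancellation in the admissibility condition, which I expect to unfold by careful substitution and completion of squares, keeping track of the interplay between the fiber simplex $\Sigma$ and the non-trivial signatures of the $M_j$'s (most critically, the mixed signature of $M_0$, which is the source of the non-nefness driving Nakayama's example).
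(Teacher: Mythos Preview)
Your approach is correct and coincides with the paper's own: Theorem~1.1 is obtained precisely by specializing Theorem~\ref{main_theorem} (together with Lemma~\ref{lem1}) to the $\mathbb{P}^2$-bundle of Example~\ref{example3_2}, and then computing $\Box_{\rm Nef}(L_0,h)$ explicitly via the nef-cone description of the abelian surface $V$ given in Section~6. Two small points to tighten in the write-up: the continuity assertion on $X\setminus\mathbb{P}(L_0)$ is not settled by the chart $\sigma_1$ alone---you must also inspect $\sigma_2$ and $\sigma_3$, where the verification reduces to checking that $e^1,e^2\in\Box_{\rm Nef}(L_0,h)$ (equivalently that $L_1,L_2$ are nef); and $H$ is not all of $\partial\mathcal{A}$ but only the nef-boundary arc of $\partial\Box_{\rm Nef}(L_0,h)$ facing the origin, which is the piece on which the supremum is actually attained when $|x_1|,|x_2|<1$.
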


This expression enables us to compute the multiplier ideal sheaf $\mathcal{J}(h_{\rm min}^t)$ 
for each positive number $t$, whose stalk at $x_0\in X$ is defined by 
\[\mathcal{J}(h_{\rm min}^t)_{x_0}=\{f\in\mathcal{O}_{X, x}\mid |f|^2e^{-t\varphi_{\rm min}}\text{ \ is\ integrable\ around\ }x_0\}, \]
where $\varphi_{\rm min}$ is the local weight function of $h_{\rm min}$ around $x_0$. 

\begin{corollary}
$\mathcal{J}(h_{\rm min}^t)$ is trivial at any point in $X\setminus\mathbb{P}(L_0)$. 
For a point $x_0\in\mathbb{P}(L_0)$, the stalk $\mathcal{J}(h_{\rm min})_{x_0}$ of the multiplier ideal sheaf is the ideal of $\mathcal{O}_{X, x_0}$ which is generated by the polynomials
\[\{x_1^px_2^q\mid (p+1, q+1)\in {\rm Int}(S_t)\cap\mathbb{Z}^2\}, \] 
where we denote by $S_t$ the set $\{(t\alpha, t\beta)\in\mathbb{R}^2\mid\alpha, \beta\geq 0,\ a^2(\alpha+\beta)^2\geq(1-\alpha)^2+(1-\beta)^2\}$ (
For the shape of $S_t$ in this case, see Figure \ref{example2}). 
\end{corollary}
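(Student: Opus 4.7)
The plan is to reduce the multiplier-ideal computation to a toric integrability problem in $(x_1, x_2)$, and then to identify the resulting convergence region with $\text{Int}(S_t)$ via a support-function argument. For the first assertion, the theorem above ensures that $\psi$ is continuous on $X\setminus\mathbb{P}(L_0)$, so $e^{-t\psi}$ is locally bounded, every holomorphic germ is locally square integrable against it, and $\mathcal{J}(h_{\rm min}^t)$ is trivial there. For the second assertion, fix $x_0\in\mathbb{P}(L_0)$, choose coordinates $(x_1, x_2, z_1, z_2)$, and decompose $\psi = \widetilde{\psi} + \chi$, where $\widetilde{\psi}(x_1, x_2) := \log\max_{(\alpha,\beta)\in H}(|x_1|^{2\alpha}|x_2|^{2\beta})$ is toric in $(x_1, x_2)$ and $\chi$ is bounded on a small polydisc neighborhood $U$ of $x_0$. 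This bounds $e^{-t\psi}$ above and below by constant multiples of $e^{-t\widetilde{\psi}}$, so all integrability questions reduce to ones involving $\widetilde\psi$ alone.

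The heart of the matter is the equivalence, for nonnegative integers $p, q$,
\[
\int_U |x_1|^{2p}|x_2|^{2q} e^{-t\widetilde{\psi}}\, dV_{x_1, x_2} < \infty\ \Longleftrightarrow\ (p+1, q+1) \in \text{Int}(S_t).
\]
Substituting $u_j = -\log|x_j|$ and performing the angular integration reduces the left-hand side, up to a positive constant, to
\[
\int\!\!\int \exp\!\Bigl(-2(p+1)u_1 - 2(q+1)u_2 + 2t\inf_{(\alpha,\beta)\in H}(\alpha u_1 + \beta u_2)\Bigr)\, du_1\, du_2
\]
over a quadrant near infinity in $(u_1, u_2)$, and in polar coordinates on $\mathbb{R}_{\geq 0}^2$ the convergence is equivalent to the strict inequality
\[
(p+1)\cos\theta + (q+1)\sin\theta > t\inf_{(\alpha,\beta)\in H}(\alpha\cos\theta + \beta\sin\theta)
\]
for every $\theta\in[0, \pi/2]$. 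Inspection of the defining quadratic shows that $H$ is one convex arc of the hyperbola bounding $S_1$, so that $S_1$ is the convex up-closure of $H$ inside $\mathbb{R}_{\geq 0}^2$ and the right-hand side agrees with $\inf_{(\alpha,\beta)\in S_1}(\alpha\cos\theta + \beta\sin\theta)$; a separating-hyperplane argument then yields that the strict inequality holds for all such $\theta$ if and only if $(p+1, q+1)\in\text{Int}(tS_1) = \text{Int}(S_t)$.

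Granting this equivalence, the inclusion $I \subseteq \mathcal{J}(h_{\rm min}^t)_{x_0}$ is immediate: for each generator $x_1^p x_2^q$ with $(p+1, q+1)\in\text{Int}(S_t)$, the upper bound $e^{-t\psi}\leq e^{tC}e^{-t\widetilde\psi}$ with $C=\sup_U|\chi|$, Fubini, and the finite toric integral together give $\int |x_1^p x_2^q|^2 e^{-t\psi}\, dV < \infty$. For the reverse inclusion, given $f\in\mathcal{J}(h_{\rm min}^t)_{x_0}$, expand $f = \sum f_{p,q}(z_1, z_2)\,x_1^p x_2^q$ as a convergent power series; rotation invariance of $\psi$ in each $x_j$ kills cross terms after $\theta$-integration, yielding
\[
\int|f|^2 e^{-t\psi}\, dV = \sum_{p, q} \int |f_{p, q}(z)|^2 |x_1|^{2p}|x_2|^{2q} e^{-t\psi}\, dV,
\]
and each summand must be finite. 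For $(p+1, q+1)\notin\text{Int}(S_t)$ the lower bound $e^{-t\psi}\geq e^{-tC}e^{-t\widetilde\psi}$ combined with Fubini forces $\bigl(\int|f_{p, q}(z)|^2\, dV_z\bigr)\bigl(\int|x_1|^{2p}|x_2|^{2q} e^{-t\widetilde\psi}\, dV_x\bigr) < \infty$; since the second factor is infinite, $f_{p, q}\equiv 0$ on $U$ and hence as a germ by analyticity. Because $\text{Int}(S_t)\cap\mathbb{Z}^2$ is up-closed in $\mathbb{Z}_{\geq 0}^2$, the surviving Taylor expansion lies in the monomial ideal $I$, giving $f \in I$.

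The main obstacle is the convex-analytic step verifying that $H$ bounds a convex region $S_1$ in $\mathbb{R}_{\geq 0}^2$, so that the lower support functions of $H$ and $S_1$ agree on first-quadrant directions. This reduces to inspecting the defining quadratic $a^2(\alpha+\beta)^2 - (1-\alpha)^2 - (1-\beta)^2$, whose Hessian is indefinite but whose zero locus consists of two hyperbola branches only one of which meets $\mathbb{R}_{\geq 0}^2$. Once this convex-geometric fact is in place, the separation-based characterization of $\text{Int}(S_t)$ by strict support-function inequalities is standard.
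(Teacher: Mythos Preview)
Your argument is essentially correct, but it takes a different route from the paper. The paper does not compute the integrals directly; instead it deduces the corollary from the general Corollary~\ref{multip_ideals_of_hmin}, which in turn is obtained by combining Theorem~\ref{main_theorem} with Guenancia's computation of multiplier ideals for \emph{toric} plurisubharmonic functions (a transcendental analogue of Howald's theorem). What you have done is to reprove, by hand, the special case of Guenancia's result needed here: the substitution $u_j=-\log|x_j|$, the passage to polar coordinates, and the identification of the convergence region via the lower support function of the Newton body is exactly the mechanism underlying that theorem. Your approach has the virtue of being self-contained and of making the role of the convex set $S_t$ completely transparent; the paper's approach is shorter and places the result in a general framework that applies uniformly to all the examples in Section~6.

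One small slip: in the reverse inclusion you invoke ``rotation invariance of $\psi$ in each $x_j$'' to kill cross terms. Theorem~1.1 only gives $\psi=\widetilde\psi+O(1)$ and says nothing about circular symmetry of the remainder $\chi$. The fix is immediate: you have already shown that $\int|f|^2e^{-t\psi}<\infty$ is equivalent to $\int|f|^2e^{-t\widetilde\psi}<\infty$, and $\widetilde\psi$ \emph{is} rotation invariant; run the orthogonality argument with $\widetilde\psi$ in place of $\psi$ and the rest goes through unchanged. (Alternatively, one may observe that the explicit weight $\psi_\sigma$ constructed in Section~4 depends only on $|x_1|,|x_2|,z$, so the specific minimal metric produced there is genuinely rotation invariant.) The convex-geometric step you flag as the main obstacle is indeed the crux; note that convexity of $S_1$ need not be read off the indefinite quadratic directly, but follows from the fact that $S_1$ is the upper closure $\overline{\overline{\Box_{\rm Nef}(L_0,h)}}$ of a convex set, after which your separating-hyperplane characterisation of $\mathrm{Int}(S_t)$ is standard.
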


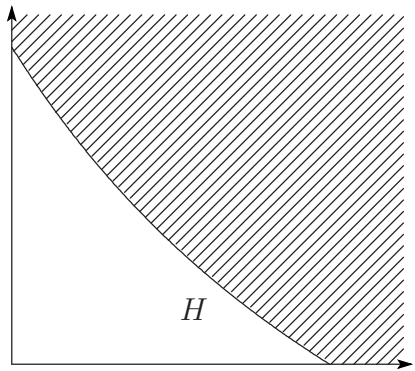
\begin{figure}[htbp]
  \begin{center}
\unitlength 0.1in
\begin{picture}( 21.1700, 20.3700)(  3.7300,-20.3700)
%
{\color[named]{Black}{%
\special{pn 8}%
\special{pa 400 2010}%
\special{pa 2490 2010}%
\special{fp}%
\special{sh 1}%
\special{pa 2490 2010}%
\special{pa 2424 1990}%
\special{pa 2438 2010}%
\special{pa 2424 2030}%
\special{pa 2490 2010}%
\special{fp}%
}}%
%
{\color[named]{Black}{%
\special{pn 8}%
\special{pa 400 2010}%
\special{pa 400 140}%
\special{fp}%
\special{sh 1}%
\special{pa 400 140}%
\special{pa 380 208}%
\special{pa 400 194}%
\special{pa 420 208}%
\special{pa 400 140}%
\special{fp}%
}}%
{\color[named]{Black}{%
\special{pn 8}%
\special{pa 400 350}%
\special{pa 420 382}%
\special{pa 426 388}%
\special{pa 440 412}%
\special{pa 446 420}%
\special{pa 456 436}%
\special{pa 460 442}%
\special{pa 470 458}%
\special{pa 476 466}%
\special{pa 480 474}%
\special{pa 486 480}%
\special{pa 490 488}%
\special{pa 496 496}%
\special{pa 500 504}%
\special{pa 510 518}%
\special{pa 516 526}%
\special{pa 526 540}%
\special{pa 530 548}%
\special{pa 546 568}%
\special{pa 550 576}%
\special{pa 626 682}%
\special{pa 630 688}%
\special{pa 646 708}%
\special{pa 650 714}%
\special{pa 666 736}%
\special{pa 670 742}%
\special{pa 676 748}%
\special{pa 680 754}%
\special{pa 690 768}%
\special{pa 696 774}%
\special{pa 700 782}%
\special{pa 706 788}%
\special{pa 710 794}%
\special{pa 716 800}%
\special{pa 720 808}%
\special{pa 730 820}%
\special{pa 736 826}%
\special{pa 746 838}%
\special{pa 750 846}%
\special{pa 760 858}%
\special{pa 766 864}%
\special{pa 786 888}%
\special{pa 790 896}%
\special{pa 866 986}%
\special{pa 870 990}%
\special{pa 896 1020}%
\special{pa 900 1026}%
\special{pa 910 1038}%
\special{pa 916 1042}%
\special{pa 926 1054}%
\special{pa 930 1060}%
\special{pa 940 1072}%
\special{pa 946 1076}%
\special{pa 950 1082}%
\special{pa 956 1088}%
\special{pa 960 1094}%
\special{pa 966 1098}%
\special{pa 970 1104}%
\special{pa 976 1110}%
\special{pa 980 1116}%
\special{pa 986 1120}%
\special{pa 990 1126}%
\special{pa 996 1132}%
\special{pa 1000 1138}%
\special{pa 1026 1164}%
\special{pa 1030 1170}%
\special{pa 1096 1236}%
\special{pa 1100 1242}%
\special{pa 1200 1342}%
\special{pa 1206 1346}%
\special{pa 1220 1360}%
\special{pa 1226 1364}%
\special{pa 1240 1380}%
\special{pa 1246 1384}%
\special{pa 1270 1408}%
\special{pa 1276 1412}%
\special{pa 1280 1416}%
\special{pa 1286 1420}%
\special{pa 1310 1444}%
\special{pa 1316 1448}%
\special{pa 1330 1462}%
\special{pa 1336 1466}%
\special{pa 1346 1476}%
\special{pa 1356 1484}%
\special{pa 1360 1488}%
\special{pa 1366 1492}%
\special{pa 1370 1498}%
\special{pa 1380 1506}%
\special{pa 1386 1510}%
\special{pa 1396 1518}%
\special{pa 1400 1524}%
\special{pa 1410 1532}%
\special{pa 1416 1536}%
\special{pa 1430 1548}%
\special{pa 1436 1554}%
\special{pa 1456 1570}%
\special{pa 1460 1574}%
\special{pa 1490 1598}%
\special{pa 1496 1604}%
\special{pa 1596 1684}%
\special{pa 1600 1686}%
\special{pa 1630 1710}%
\special{pa 1636 1714}%
\special{pa 1656 1730}%
\special{pa 1660 1732}%
\special{pa 1676 1744}%
\special{pa 1680 1748}%
\special{pa 1696 1760}%
\special{pa 1700 1762}%
\special{pa 1710 1770}%
\special{pa 1716 1774}%
\special{pa 1726 1782}%
\special{pa 1730 1784}%
\special{pa 1740 1792}%
\special{pa 1746 1796}%
\special{pa 1750 1800}%
\special{pa 1756 1802}%
\special{pa 1766 1810}%
\special{pa 1770 1814}%
\special{pa 1776 1818}%
\special{pa 1780 1820}%
\special{pa 1790 1828}%
\special{pa 1796 1832}%
\special{pa 1800 1836}%
\special{pa 1806 1838}%
\special{pa 1810 1842}%
\special{pa 1816 1846}%
\special{pa 1820 1850}%
\special{pa 1826 1852}%
\special{pa 1830 1856}%
\special{pa 1836 1860}%
\special{pa 1840 1864}%
\special{pa 1846 1866}%
\special{pa 1850 1870}%
\special{pa 1860 1876}%
\special{pa 1866 1880}%
\special{pa 1870 1884}%
\special{pa 1876 1888}%
\special{pa 1880 1890}%
\special{pa 1886 1894}%
\special{pa 1896 1900}%
\special{pa 1900 1904}%
\special{pa 1910 1910}%
\special{pa 1916 1914}%
\special{pa 1926 1920}%
\special{pa 1930 1924}%
\special{pa 1940 1930}%
\special{pa 1946 1934}%
\special{pa 1956 1940}%
\special{pa 1960 1944}%
\special{pa 1976 1954}%
\special{pa 1980 1958}%
\special{pa 1996 1966}%
\special{pa 2000 1970}%
\special{pa 2016 1980}%
\special{pa 2020 1984}%
\special{pa 2046 1998}%
\special{pa 2050 2002}%
\special{pa 2060 2008}%
\special{pa 2064 2010}%
\special{fp}%
}}%
\put(12.8000,-17.8000){\makebox(0,0)[lb]{$H$}}%
%
{\color[named]{Black}{%
\special{pn 8}%
\special{pa 2450 910}%
\special{pa 1650 1710}%
\special{fp}%
\special{pa 2450 970}%
\special{pa 1680 1740}%
\special{fp}%
\special{pa 2450 1030}%
\special{pa 1710 1770}%
\special{fp}%
\special{pa 2450 1090}%
\special{pa 1750 1790}%
\special{fp}%
\special{pa 2450 1150}%
\special{pa 1780 1820}%
\special{fp}%
\special{pa 2450 1210}%
\special{pa 1820 1840}%
\special{fp}%
\special{pa 2450 1270}%
\special{pa 1850 1870}%
\special{fp}%
\special{pa 2450 1330}%
\special{pa 1890 1890}%
\special{fp}%
\special{pa 2450 1390}%
\special{pa 1930 1910}%
\special{fp}%
\special{pa 2450 1450}%
\special{pa 1960 1940}%
\special{fp}%
\special{pa 2450 1510}%
\special{pa 2000 1960}%
\special{fp}%
\special{pa 2450 1570}%
\special{pa 2040 1980}%
\special{fp}%
\special{pa 2450 1630}%
\special{pa 2070 2010}%
\special{fp}%
\special{pa 2450 1690}%
\special{pa 2130 2010}%
\special{fp}%
\special{pa 2450 1750}%
\special{pa 2190 2010}%
\special{fp}%
\special{pa 2450 1810}%
\special{pa 2250 2010}%
\special{fp}%
\special{pa 2450 1870}%
\special{pa 2310 2010}%
\special{fp}%
\special{pa 2450 1930}%
\special{pa 2370 2010}%
\special{fp}%
\special{pa 2450 850}%
\special{pa 1610 1690}%
\special{fp}%
\special{pa 2450 790}%
\special{pa 1580 1660}%
\special{fp}%
\special{pa 2450 730}%
\special{pa 1550 1630}%
\special{fp}%
\special{pa 2450 670}%
\special{pa 1510 1610}%
\special{fp}%
\special{pa 2450 610}%
\special{pa 1480 1580}%
\special{fp}%
\special{pa 2450 550}%
\special{pa 1450 1550}%
\special{fp}%
\special{pa 2450 490}%
\special{pa 1410 1530}%
\special{fp}%
\special{pa 2450 430}%
\special{pa 1380 1500}%
\special{fp}%
\special{pa 2450 370}%
\special{pa 1350 1470}%
\special{fp}%
\special{pa 2450 310}%
\special{pa 1320 1440}%
\special{fp}%
\special{pa 2450 250}%
\special{pa 1290 1410}%
\special{fp}%
\special{pa 2450 190}%
\special{pa 1260 1380}%
\special{fp}%
}}%
%
{\color[named]{Black}{%
\special{pn 8}%
\special{pa 2400 180}%
\special{pa 1220 1360}%
\special{fp}%
\special{pa 2340 180}%
\special{pa 1190 1330}%
\special{fp}%
\special{pa 2280 180}%
\special{pa 1160 1300}%
\special{fp}%
\special{pa 2220 180}%
\special{pa 1130 1270}%
\special{fp}%
\special{pa 2160 180}%
\special{pa 1100 1240}%
\special{fp}%
\special{pa 2100 180}%
\special{pa 1070 1210}%
\special{fp}%
\special{pa 2040 180}%
\special{pa 1040 1180}%
\special{fp}%
\special{pa 1980 180}%
\special{pa 1020 1140}%
\special{fp}%
\special{pa 1920 180}%
\special{pa 990 1110}%
\special{fp}%
\special{pa 1860 180}%
\special{pa 960 1080}%
\special{fp}%
\special{pa 1800 180}%
\special{pa 930 1050}%
\special{fp}%
\special{pa 1740 180}%
\special{pa 900 1020}%
\special{fp}%
\special{pa 1680 180}%
\special{pa 870 990}%
\special{fp}%
\special{pa 1620 180}%
\special{pa 850 950}%
\special{fp}%
\special{pa 1560 180}%
\special{pa 820 920}%
\special{fp}%
\special{pa 1500 180}%
\special{pa 790 890}%
\special{fp}%
\special{pa 1440 180}%
\special{pa 760 860}%
\special{fp}%
\special{pa 1380 180}%
\special{pa 740 820}%
\special{fp}%
\special{pa 1320 180}%
\special{pa 710 790}%
\special{fp}%
\special{pa 1260 180}%
\special{pa 690 750}%
\special{fp}%
\special{pa 1200 180}%
\special{pa 660 720}%
\special{fp}%
\special{pa 1140 180}%
\special{pa 630 690}%
\special{fp}%
\special{pa 1080 180}%
\special{pa 610 650}%
\special{fp}%
\special{pa 1020 180}%
\special{pa 580 620}%
\special{fp}%
\special{pa 960 180}%
\special{pa 560 580}%
\special{fp}%
\special{pa 900 180}%
\special{pa 530 550}%
\special{fp}%
\special{pa 840 180}%
\special{pa 510 510}%
\special{fp}%
\special{pa 780 180}%
\special{pa 490 470}%
\special{fp}%
\special{pa 720 180}%
\special{pa 460 440}%
\special{fp}%
\special{pa 660 180}%
\special{pa 440 400}%
\special{fp}%
}}%
%
{\color[named]{Black}{%
\special{pn 8}%
\special{pa 600 180}%
\special{pa 410 370}%
\special{fp}%
\special{pa 540 180}%
\special{pa 400 320}%
\special{fp}%
\special{pa 480 180}%
\special{pa 400 260}%
\special{fp}%
}}%
\end{picture}%
    \caption{The shaded area of this figure represents the set $S_1$. 
The set $S_t$ is the set of points $p\in\mathbb{R}^2$ which satisfies $\frac{p}{t}\in S_1$. }
    \label{example2}
  \end{center}
\end{figure}

According to \cite{N}, this $(X, L)$ is an example which admits no Zariski decomposition even after modifications. 
So, it can be expected in this case that the behavior of this multiplier ideal sheaf is different from the algebraic cases.   
Indeed, the set of jumping numbers ${\rm Jump}(\psi; x_0)$ for a point $x$ in $\mathbb{P}(L_0)$ (see \cite[Section 5]{ELSV} for definition) can be written as follows in this case; 
\[
{\rm Jump}(\psi; x_0)=\left\{\left.\frac{p+\sqrt{2p^2a^2-q^2}}{2}\,\right| p, q\in\mathbb{Z},\ 0\leq q<p,\ p-q\equiv 0\ ({\rm mod}\ 2)\right\}, 
\]
which is the set of the largest roots of the quadratic equations
$4T^2-4pT+(1-2a^2)p^2+q^2=0$
of $T$, where integers $p$ and $q$ satisfy the above conditions. 
This set has different properties from algebraic multiplier ideal sheaves. 
For example, it seems difficult to expect the ``periodicity" property, 
and does not have the ``rationality" property in this case 
(For these property, see \cite[1.12]{ELSV} or Remark \ref{propertis_of_jumping_numbers_in_algebraic_cases} below). 
Especially, the singularity exponent $c_{x_0}(\psi)$, which is the minimum number in the set of all jumping numbers, satisfies
\[
c_{x_0}(\psi)=\sqrt{2}a+1, 
\]
and it is clearly irrational. \par
More generally, we give a concrete expression of a minimal singular metric on a big line bundle $L$ on 
the total space of such a toric bundle, see Theorem \ref{main_theorem}. 
As an application, we discuss Zariski closedness of the non-nef locus ${\rm NNef}(L)$ of $L$, see Corollary \ref{NNef_is_Z-closed}. 
\par
The organization of the paper is as follows. 
Let $X$ be the total space of a smooth projective toric bundle over a complex torus, and $L$ be a big line bundle over $X$. 
In Section 2, we recall some facts and notations related to analysis on $X$ and $L$. 
In Section 3, we fix a way to coordinate $X$, and 
study how modifications of $X$ or zeros of holomorphic sections of $L$ can be treated by using this coordinates system. 
In Section 4, we construct a singular hermitian metric $\{e^{-\psi_\sigma}\}$ of $L$ and show it is a minimal singular metric. 
In Section 5, we study some properties related to the positivity of $L$, as applications of the result in Section 4. 
Here we introduce how to calculate the Kiselman numbers and the Lelong numbers of minimal singular metrics, 
and study the non-nef locus of $L$ and multiplier ideal sheaves associated to minimal singular metrics.  
In Section 6, we introduce three examples for $(X, L)$, all of which is based on the example introduced in \cite{N}, 
and apply our result to them. 
\vskip3mm
{\bf Acknowledgment. } 
The author would like to thank his supervisor Prof. Shigeharu Takayama whose enormous support and insightful comments were invaluable during the course of his study. 
He is very grateful to Prof. Shunsuke Takagi for valuable comments and various suggestion. 
He also thanks Tomoyuki Hisamoto and Shin-ichi Matsumura who gave him invaluable comments and warm encouragements. 
He is supported by the Grant-in-Aid for Scientific Research (KAK-
ENHI No. 25-2869) and the Grant-in-Aid for JSPS fellows. 
This work was supported by the Program for Leading Graduate
Schools, MEXT, Japan. 

\section{Preliminaries to analysis on toric bundles}

\subsection{Analysis on compact K\"ahler manifolds}
Let $X$ be a compact K\"ahler manifold and $L$ be a holomorhic line bundle on $X$. 
Let $h$ be a singular hermitian metric on $L$. 
For each local trivialization of $L$ on an open set of $X$, 
``the inner product" defined by $h$ can be written as
$
(\xi, \eta)_z=e^{-\psi(z)}\xi\overline{\eta}
$
where $z$ is a point in the open set, $\xi$ and $\eta$ are points in $\mathbb{C}$, which we regard as the $z$-fiber of $L$, 
and $\psi$ is a locally integrable function defined on the open set, which we call the local weight of $h$.  
The local currents written as $dd^c \psi$ for the local weight $\psi$ of $h$ glue together to define the curvature current associated to $h$. 
We denote it by $\Theta_h$. 

In order to define the minimal singular metric, 
let us recall how to compare the singularities of plurisubharmonic functions. 

\begin{definition}\label{sim_sing} (\cite[1.4]{DPS00})
Let $\varphi$ and $\psi$ be plurisubharmonic functions defined on a neighborhood of $x\in X$. 
We write $\psi\prec_{\rm sing}\varphi$ at $x$ when there exists a positive constant $C$ such that the inequality $e^{-\varphi}\leq Ce^{-\psi}$ holds for each point sufficiently near to $x$. 
We denote $\varphi\sim_{\rm sing}\psi$ at $x$ if $\varphi\prec_{\rm sing}\psi$ and $\varphi\succ_{\rm sing}\psi$ holds at $x$. 
\end{definition}

By using this notation, we can define the minimal singular metric as follows. 

\begin{definition}\label{minimal_singular_metric}
Let $h_{\rm min}$ be a singular hermitian metric on $L$ which satisfies $\Theta_{h_{\rm min}}\geq 0$. 
We call $h_{\rm min}$ a minimal singular metric 
if $\psi\prec_{\rm sing}\varphi_{\rm min}$ holds at any point $x\in X$ 
for all singular hermitian metric $h$ satisfying $\Theta_h\geq 0$, 
where $\varphi_{\rm min}$ and $\psi$ stand for the local weight functions of $h_{\rm min}$ and $h$, respectively, 
with respect to a local trivialization of $L$ around the point $x\in X$. 
\end{definition}

It is known that there exists a minimal singular metric on every pseudo-effective line bundle. 
This fact is proved by considering the upper semi-continuous regularization of the supremum of the all appropriately normalized $\psi$`s, where $\psi$ is as in Definition \ref{minimal_singular_metric} 
(see \cite[1.5]{DPS00} for details). 

Let $L$ be a big line bundle. 
We denote by $N(L)$ the negative part $\sum_{\Gamma\text{\ :\ prime\ divisor}}\nu(\varphi_{\rm min}, \Gamma)\Gamma$ of $L$ in the sense of the divisorial Zariski decomposition \cite{B}, where $\varphi_{\rm min}$ is the local weight of a minimal singular metric on $L$ 
and 
$\nu(\varphi_{\rm min}, \Gamma)$ is the Lelong number of $\varphi_{\rm min}$ at the divisor $\Gamma$. 
We say that $L$ admits a Zariski decomposition if the positive part
$P(L):=c_1(L\otimes\mathcal{O}_X(L))$
is nef class. 
We here remark that this definition of the Zariski-decomposability coincides with Nakayama's algebraic one \cite{N}.

\subsection{Complex tori}
Here, let us recall some fundamental terminologies related to complex tori. 
Let $\Lambda\subset \mathbb{C}^d$ be a lattice. 
We denote $\mathbb{C}^d/\Lambda$ by $V$ and 
the natural map $\mathbb{C}^d\to V$ by $p$. 

\begin{proposition}\label{bl}(\cite[Chapter 3]{BL})
Following four propositions hold for above $d, V$, and $\Lambda$ as above. Here, let us denote by $\mathbb{H}_d$ the set of all hermitian matrices of size $d\times d$ with $\mathbb{C}$-coefficients. \\
$(1)$ There exists an injective $\mathbb{R}$-linear map ${\rm NS}(V)\otimes \mathbb{R}\to \mathbb{H}_d$. \\
$(2)$ By this linear map, ${\rm NS}(V)$ is identified with 
$\{H\in\mathbb{H}_d\mid \forall \lambda , \mu \in\Lambda, {\rm Im}\,(\lambda H\bar{\mu})\in\mathbb{Z}\}$. \\
$(3)$ By this linear map, the nef cone ${\rm Nef}(V)\subset{\rm NS}(V)$ is identified with 
\[
\{H\in\mathbb{H}_d\mid H\geq 0\text{\ and\ }H\text{\ is\ an\ element\ of\ the\ image\ of\ the\ set\ }{\rm NS}(V)\otimes \mathbb{R}\}. 
\]
$(4)$ Let $c_1(E)$ be identified with $H_E\in \mathbb{H}_d$ by this linear map for a line bundle $E$ on $V$. 
Fix a metric $h_E$ of $E$ whose curvature form is a harmonic form with respect to the Euclidean metric 
(such $h_E$ always exists and is unique up to scale). 
Here we fix a point of $V$ and denote by $z=(z_1, z_2, \dots, z_d)$ the local coordinates of $V$ 
around the point induced by the map $p$ and the usual coordinates of $\mathbb{C}^d$. 
Then, there exists a canonically determined local frame $e$ of $E$ on the neighborhood of the point such that, with respect to this local trivialization,  
the local weight function $\varphi_{E}$ of $h_E$ can be written as
\[\varphi_{E}(z_1, z_2, \dots, z_d)=(z_1, z_2, \dots, z_d)H_E\overline{\left(\hskip-2mm\begin{array}{c}
z_1\\
z_2\\
\vdots \\
z_d\\
\end{array}\hskip-2mm\right)}. \]
\end{proposition}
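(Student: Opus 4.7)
The plan is to deduce all four statements from the classical Appell--Humbert description of $\mathrm{Pic}(V)$. I would begin from the exponential sequence on $V$, which realizes $c_1\colon \mathrm{Pic}(V)\to H^2(V,\mathbb{Z})$. Using the fact that $V$ is a $K(\pi,1)$ for $\pi=\Lambda$, the target is canonically the group of $\mathbb{Z}$-valued alternating $2$-forms $E$ on $\Lambda$, and the image of $c_1$ is characterized by the Hodge-type compatibility $E(ix,iy)=E(x,y)$ after $\mathbb{R}$-linear extension to $\mathbb{C}^d$.

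For (1) and (2), I would exploit the classical bijection between such compatible alternating forms $E$ and hermitian forms $H$ on $\mathbb{C}^d$ via $H(x,y)=E(ix,y)+\sqrt{-1}\,E(x,y)$. Tensoring by $\mathbb{R}$ kills the torsion part of $\mathrm{NS}(V)$, which produces the injective $\mathbb{R}$-linear map $\mathrm{NS}(V)\otimes\mathbb{R}\to\mathbb{H}_d$; and the integrality condition $\mathrm{Im}\,(\lambda H\bar\mu)\in\mathbb{Z}$ for $\lambda,\mu\in\Lambda$ is exactly the integrality of the corresponding $E$ on $\Lambda\times\Lambda$ read through this correspondence.

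For (3), the strategy is to combine Lefschetz's ampleness criterion on a complex torus --- a line bundle is ample if and only if its associated hermitian form is positive definite --- with the fact that the nef cone is the closure of the ample cone in $\mathrm{NS}(V)\otimes\mathbb{R}$. The nontrivial inclusion, that every positive semi-definite class in the image is nef, follows by perturbing by a small rational ample class. For (4), I would pass to $\mathbb{C}^d$ via $p$, where $p^*E$ is holomorphically trivial with an Appell--Humbert cocycle $e_\lambda(z)=\chi(\lambda)\exp\bigl(\pi H_E(\lambda,z)+\tfrac{\pi}{2}H_E(\lambda,\lambda)\bigr)$ for a semi-character $\chi$. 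The hermitian metric $e^{-\pi H_E(z,z)}$ on the trivial bundle descends to $E$ by direct verification against this cocycle, and its curvature form $\pi\,\partial\bar\partial H_E(z,z)$ is translation-invariant, hence harmonic. Uniqueness up to a positive scalar is because two harmonic representatives of the same class differ by a global harmonic function, which is constant by compactness of $V$. Writing this out in the flat coordinates $z$ around the chosen point, with the local frame $e$ determined by the Appell--Humbert trivialization, gives exactly the quadratic local weight $\varphi_E(z)=z\,H_E\,\bar z^{\,t}$ asserted in the statement.

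The main obstacle I expect is not any single deep step but the bookkeeping: keeping the factors of $\pi$ and $\sqrt{-1}$ consistent between the alternating, hermitian, and curvature incarnations, and in particular fixing the Appell--Humbert trivialization precisely so that the local weight has no additive semi-character contribution and the claimed formula holds on the nose, rather than merely up to a pluriharmonic correction.
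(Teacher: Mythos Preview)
The paper does not give its own proof of this proposition: it is stated as a citation of \cite[Chapter~3]{BL} and is used as a black box throughout. So there is nothing in the paper to compare your argument against, and your outline is exactly the standard Appell--Humbert derivation one finds in Birkenhake--Lange.

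Your sketch is essentially correct. One small point worth tightening concerns part~(3). Your argument ``perturb a semi-positive $H$ by a small rational ample class'' presupposes that $V$ carries an ample class, i.e.\ that $V$ is an abelian variety; the proposition as stated is for an arbitrary complex torus $\mathbb{C}^d/\Lambda$. A cleaner route, which avoids this hypothesis and also avoids the closure-of-ample-cone step, is the direct one: if $H_E\geq 0$ then the explicit metric you wrote down in part~(4) already has semi-positive curvature, so $E$ is nef; conversely, if $H_E$ has a negative eigenvector $v$, restrict to the (real) complex line $\mathbb{C}v$ and use the induced sub-torus (or an approximating elliptic curve when $V$ is algebraic) to contradict nefness. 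In the paper's applications $V$ is always an abelian surface, so your perturbation argument is in any case adequate for what is actually used. The normalization worry you flag (factors of $\pi$) is real but harmless: the paper's map $\mathrm{NS}(V)\otimes\mathbb{R}\to\mathbb{H}_d$ is only determined up to a positive scalar, and the explicit matrices in the introduction show that the author has absorbed the $\pi$ into that map rather than into $\varphi_E$.
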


\subsection{Toric bundles}
Here, we review fundamental terminology related to toric bundles. 
We follow \cite[IV]{N} basically. 
Let us denote by $V$ a base complex manifold. 
For simplicity, we restrict ourselves to the case where $V$ is a complex torus. 
Let $N$ be a free $\mathbb{Z}$-module of rank $n$, and $M$ be the dual module ${\rm Hom}(N, \mathbb{Z})$. 
We denote by $e_1, e_2, \dots, e_n$ generators of $N$,and by $e^1, e^2, \dots, e^n$ the dual generators of $M$. 
We write $N_{\mathbb{R}}$ and $M_{\mathbb{R}}$ for $N\otimes\mathbb{R}$ and $M\otimes\mathbb{R}$, respectively. 
We fix a group homomorphism
\[
 \mathcal{L} \colon M\to {\rm Pic}(V)
\]
and a fan $\Sigma$ of $N$, and construct a toric bundle $\pi \colon \mathbb{T}_N(\Sigma, \mathcal{L})\to V$. 
We assume the fan $\Sigma$ is smooth projective, which means that the fan is 
defined by a smooth full-dimensional lattice polytope. 
Under this assumption, the toric variety $\mathbb{T}_N(\Sigma)$ is a smooth projective variety. 
We denote by $\mathcal{L}^m\in{\rm Pic}(V)$ the image of $m\in M$. 
For simplicity, we also denote by $\mathcal{L}^m$ the image of $m\in M_{\mathbb{R}}$ with respect to the linear map 
\[
\mathcal{L}\otimes\mathbb{R} \colon M_\mathbb{R}\to {\rm Pic}(V)\otimes\mathbb{R}. 
\]

\begin{definition}
For $\sigma\in\Sigma$, we define the affine toric bundle $\pi \colon \mathbb{T}_N(\sigma, \mathcal{L})\to V$ by 
\[
 \mathbb{T}_N(\sigma, \mathcal{L})={\rm Spec}_V\hskip-2mm\bigoplus_{m\in \sigma^{\vee}\cap M}\mathcal{L}^m
\]
with the canonical morphism to $V$, and the toric bundle $\pi \colon \mathbb{T}_N(\Sigma, \mathcal{L})\to V$ by gluing $\{\mathbb{T}_N(\sigma, \mathcal{L})\to V\}_{\sigma\in\Sigma}$ in the natural way. 
\end{definition}

For each cone $\sigma\in\Sigma$, 
there exists a corresponding $\mathbb{T}:={\rm Hom}\,(M, \mathbb{C}^*)$-orbit $\mathbb{O}_\sigma(\mathcal{L})$ as the case of toric varieties. 
Let us denote by $\mathbb{V}(\sigma, \mathcal{L})$ the closure of $\mathbb{O}_\sigma(\mathcal{L})$ as the subset of $\mathbb{T}_N(\Sigma, \mathcal{L})$. 
Just as the case of toric varieties, the codimension of $\mathbb{V}(\sigma, \mathcal{L})$ coincides with the dimension of $\sigma$.  
In particular, for each $1$-dimensional $\sigma\in\Sigma$, $\mathbb{V}(\sigma, \mathcal{L})$ is 
a prime divisor of $\mathbb{T}_N(\Sigma, \mathcal{L})$. 

\begin{definition}
We denote by ${\rm Ver}(\Sigma)$ the set of the whole primitive generators $v\in N$ of one-dimensional cones of $\Sigma$. 
For $v\in{\rm Ver}(\Sigma)$, we denote by $\Gamma_v$ the prime divisor $\mathbb{V}(\mathbb{R}_{\geq 0}v, \mathcal{L})$. 
Let us set 
\[
     {\rm PL}_N(\Sigma, \mathbb{Z})=\{h \colon N_\mathbb{R}\to\mathbb{R}\mid \text{for\ each\ }\sigma\in\Sigma,\ h|_\sigma\ \text{is\ linear,\ and\ }h(N)\subset\mathbb{Z}\}. 
\]
For $h\in {\rm PL}_N(\Sigma, \mathbb{Z})$, we define the divisor $D_h$ by
\[
 D_h=\sum_{v\in {\rm Ver}(\Sigma)}(-h(v))\Gamma_v. 
\]
\end{definition}

It is known that any line bundle over $\mathbb{T}_N(\Sigma, \mathcal{L})$ can be written by adding a divisor of the form $D_g$ to the pull-back of a line bundle over $V$ (\cite[2.3]{N}). 

\begin{example}\label{example3_1}
The cone $\{0\}$ is always an element of the fan $\Sigma$. 
Here we consider the affine toric bundle $\mathbb{T}_N(\{0\}, \mathcal{L})$. 
Fix a metric on $\mathcal{L}^{e^j}$ whose curvature form is a harmonic form with respect to the Euclidean metric for each $j$. 
Let $U$ be a sufficiently small open set in $V$ and $z\mapsto s^j(z)$ be such a local trivialization of $\mathcal{L}^{e^j}$ on $U$ as in Proposition \ref{bl}, 
and $z\mapsto s_j(z)$ be the dual frame of the local frame $z\mapsto s^j(z)$ for $j=1, 2, \dots, n$. 
It can be easily checked that the frame $z\mapsto s_j(z)$ is also such a section of $\mathcal{L}^{-e^j}=(\mathcal{L}^{e^j})^{-1}$ as in Proposition \ref{bl}. 
Here, 
\begin{eqnarray}
\mathbb{T}_N(\{0\}, \mathcal{L})|_{\{z\}}&=&{\rm Spec}\,\mathbb{C}[s^1(z), s^2(z), \dots, s^n(z), (s^1)^{-1}(z), 
(s^2)^{-1}(z), \dots, (s^n)^{-1}(z)]\nonumber \\
&=&\prod_{j=1}^n\mathbb{C}^*\cdot s_j(z)\nonumber 
\end{eqnarray}
for $z\in U$. 
Thus, it follows that the affine toric bundle $\mathbb{T}_N(\{0\}, \mathcal{L})$ can be considered as 
the $(\mathbb{C}^*)^n$-bundle on $V$ of which the system $\{s_j\}_j$ works as a local trivialization on $U$. 
\end{example}

\begin{example}\label{example3_2}
Second example is a case where $n=2$.  
Let $L_0, L_1, L_2$ be line bundles over $V$. 
Let $\mathcal{L}$ be a map defined by $e^j\mapsto L_j\otimes L_0^{-1}\,(j=1, 2)$ and  
$\Sigma$ be the fan generated by the three cones
\[
\sigma_1={\rm Cone}\{e_1, e_2\},\ \sigma_2={\rm Cone}\{e_2, -(e_1+e_2)\},\ \text{and}\ \sigma_3={\rm Cone}\{-(e_1+e_2), e_1\}. 
\]
\begin{figure}[ht]
\begin{center}
\unitlength 0.1in
\begin{picture}( 10.1200,  9.0800)(  0.1600,-11.1800)
%
{\color[named]{Black}{%
\special{pn 8}%
\special{pa 590 802}%
\special{pa 590 392}%
\special{fp}%
\special{sh 1}%
\special{pa 590 392}%
\special{pa 570 460}%
\special{pa 590 446}%
\special{pa 610 460}%
\special{pa 590 392}%
\special{fp}%
\special{pa 590 812}%
\special{pa 1000 812}%
\special{fp}%
\special{sh 1}%
\special{pa 1000 812}%
\special{pa 934 792}%
\special{pa 948 812}%
\special{pa 934 832}%
\special{pa 1000 812}%
\special{fp}%
}}%
\put(10.2800,-8.7400){\makebox(0,0)[lb]{$e_1$}}%
\put(4.1000,-3.8000){\makebox(0,0)[lb]{$e_2$}}%
%
{\color[named]{Black}{%
\special{pn 8}%
\special{pa 590 808}%
\special{pa 244 1110}%
\special{fp}%
\special{sh 1}%
\special{pa 244 1110}%
\special{pa 308 1082}%
\special{pa 284 1076}%
\special{pa 282 1052}%
\special{pa 244 1110}%
\special{fp}%
}}%
\put(0.1600,-12.7000){\makebox(0,0)[lb]{$-(e_1+e_2)$}}%
\put(7.5000,-6.4600){\makebox(0,0)[lb]{$\sigma_1$}}%
%
\put(5.7200,-2.1000){\makebox(0,0)[lb]{}}%
\put(1.1200,-7.9600){\makebox(0,0)[lb]{$\sigma_2$}}%
\put(5.9400,-10.6600){\makebox(0,0)[lb]{$\sigma_3$}}%
\end{picture}%
\caption{$\Sigma$. }
\end{center}
\end{figure}
Fix a metric on $\mathcal{L}^{e^j}$ whose curvature form is a harmonic form with respect to the Euclidean metric for each $j$. 
Let $U$ be a sufficiently small open set in $V$ and $z\mapsto s_1(z), z\mapsto s_2(z)$ be such local trivializations of $(L_1\otimes L_0^{-1})^{-1}, (L_2\otimes L_0^{-1})^{-1}$ of $U$  as in Proposition \ref{bl}, respectively, and $s^j$ be the dual of $s_j$ for $j=1, 2$. 
Here, 
\begin{eqnarray}
\mathbb{T}_N(\sigma_1, \mathcal{L})|_{\{z\}}&=&{\rm Spec}\,\mathbb{C}[s^1(z), s^2(z)], \nonumber \\
\mathbb{T}_N(\sigma_2, \mathcal{L})|_{\{z\}}&=&{\rm Spec}\,\mathbb{C}[(s^1(z))^{-1}s^2(z), (s^1(z))^{-1}], \nonumber \\
\mathbb{T}_N(\sigma_3, \mathcal{L})|_{\{z\}}&=&{\rm Spec}\,\mathbb{C}[(s^2(z))^{-1}, s^1(z)(s^2(z))^{-1}], \nonumber
\end{eqnarray}
for $z\in U$. Using this expressions, we can calculate that 
\[
 \mathbb{T}_N(\Sigma, \mathcal{L})=\mathbb{P}(\mathcal{O}_V\oplus(L_1\otimes L_0^{-1})\oplus(L_2\otimes L_0^{-1}))
\cong \mathbb{P}(L_0\oplus L_1\oplus L_2). 
\]
\par
In this case, ${\rm Ver}(\Sigma)$ is the set consisting of the following three elements; 
$v_0=-(e_1+e_2),\ v_1=e_1$, and $v_2=e_2$. 
Let us define $h\in{\rm PL}_N(\Sigma, \mathbb{Z})$ by $v_0\mapsto -1, v_1\mapsto 0$, and $v_2\mapsto 0$. 
Then the line bundle $L=\mathcal{O}_{\mathbb{P}(L_0\oplus L_1\oplus L_2)}(1)$ can be written as 
\[
 L\cong  \pi^*L_0\otimes\mathcal{O}_X(D_h). 
\]
\end{example}

\section{Toric bundles over complex tori}

\subsection{Holomorphic sections and local coordinates}
Let $V$ be a smooth projective variety and $\Sigma$ be the fan defined by a smooth full-dimensional lattice polytope of $M$ just as in the previous section. 
We denote by $X$ the total space of the toric bundle $\pi\colon\mathbb{T}_N(\Sigma, \mathcal{L})\to V$. 
Here we consider holomorphic sections of a line bundle $L$ over $X$. 
According to (\cite[2.3]{N}), without loss of generality, we may assume
$
L= \pi^*L_0\otimes\mathcal{O}_X(D_h), 
$
where $L_0$ is a holomorphic line bundle over $V$, and $h$ is an element of ${\rm PL}_N(\Sigma, \mathbb{Z})$. 


\begin{definition}
We denote by $\Box_h$ the set $\{m\in M_{\mathbb{R}}\mid \forall x\in N_{\mathbb{R}}, \langle m, x\rangle\geq h(x)\}$, 
and by 
$\Box_{\rm Nef}(L_0, h)$ 
the set 
$\{m\in\Box_h\mid L_0\otimes\mathcal{L}^m\text{ \ is\ nef}\}$ 
for a line bundle $L_0$ over $V$ and an element $h\in {\rm PL}_N(\Sigma, \mathbb{Z})$. 
\end{definition}

Since $\Box_h$ is a bounded closed convex set, we clearly obtain the following lemma. 

\begin{lemma}\label{BOX}
$\Box_{\rm Nef}(L_0, h)$ is a bounded closed convex subset of $M_{\mathbb{R}}$. 
\end{lemma}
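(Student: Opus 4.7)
The proof should amount to observing that $\Box_{\rm Nef}(L_0,h)$ is cut out inside the already-bounded set $\Box_h$ by a linear condition that defines a closed convex set. I would organize this in three short steps.

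First, I would recall that $\Box_h$ itself is bounded, closed, and convex. Closedness and convexity are immediate since $\Box_h$ is an intersection of closed affine half-spaces $\{m : \langle m,x\rangle\ge h(x)\}$ indexed by $x\in N_\mathbb R$. For boundedness, since $\Sigma$ is smooth projective its support is all of $N_\mathbb R$, so $h$ is defined (and positively homogeneous) on $N_\mathbb R$; given $m_0\in\Box_h$ and any nonzero direction $m\in M_\mathbb R$, pick $x\in N_\mathbb R$ with $\langle m,x\rangle<0$, whereupon $\langle m_0+tm,x\rangle<h(x)$ for $t\gg 0$, so $m_0+tm\notin\Box_h$. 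Hence $\Box_h$ contains no ray and, being closed convex, is bounded.

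Next, I would examine the map
\[
 \Phi\colon M_\mathbb R\longrightarrow {\rm NS}(V)\otimes\mathbb R,\qquad m\longmapsto c_1(L_0)+c_1(\mathcal L^m).
\]
Because $\mathcal L$ extends $\mathbb R$-linearly to $\mathcal L\otimes\mathbb R\colon M_\mathbb R\to{\rm Pic}(V)\otimes\mathbb R$, the map $\Phi$ is affine and continuous. Under the identification of Proposition \ref{bl}, the nef cone ${\rm Nef}(V)\subset{\rm NS}(V)\otimes\mathbb R$ corresponds to an intersection of the linear subspace given by the image of ${\rm NS}(V)\otimes\mathbb R$ in $\mathbb H_d$ with the cone of positive semi-definite hermitian matrices; both of these are closed and convex, so ${\rm Nef}(V)$ is a closed convex cone.

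Finally, I would assemble
\[
 \Box_{\rm Nef}(L_0,h)=\Box_h\cap\Phi^{-1}({\rm Nef}(V)).
\]
The second factor is closed and convex as the preimage of a closed convex set under a continuous affine map, and $\Box_h$ is bounded, closed, and convex by the first step. The intersection is therefore bounded, closed, and convex, as required. No step here is a real obstacle; the only point that requires a moment's care is confirming that ${\rm Nef}(V)$ is genuinely closed, which is exactly what Proposition \ref{bl}(3) provides.
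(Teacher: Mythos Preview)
Your proof is correct and follows the same approach as the paper, which simply remarks that the lemma follows ``clearly'' from $\Box_h$ being bounded, closed, and convex. You have spelled out the details the paper omits---in particular the closedness and convexity of the nef condition via Proposition~\ref{bl}(3)---but the underlying idea is identical.
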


\begin{definition}
Here we use notations in Example \ref{example3_1}. 
For $m\in M$, we define the meromorphic section $\chi^m$ of $\pi^*\mathcal{L}^{-m}$ on $\mathbb{T}_N(\Sigma, \mathcal{L})$ by 
\[
	(x_j\cdot s_j(z))_j\longmapsto \prod_{j=1}^n (x_j\cdot s_j(z))^{m_j}
=(x_1)^{m_1}\cdot (x_2)^{m_2}\cdot \cdots (x_n)^{m_n}\cdot \left(\prod_{j=1}^n(s^j)^{-m_j}\right)(z)
\]
on $\mathbb{T}_N(\{0\}, \mathcal{L})|_U$, where $m_j=\langle m, e_j\rangle$. 
\end{definition}

$\mathbb{T}_N(\{0\}, \mathcal{L})$, which we considered in Example \ref{example3_1}, 
is always a dense subset of $\mathbb{T}_N(\Sigma, \mathcal{L})$. 
In the case of toric varieties, or the case that $V$ is the ``$0$-dimensional complex torus", 
regular functions on $\mathbb{T}_N(\Sigma, \mathcal{L})$ can be regarded as meromorphic functions on $\mathbb{T}_N(\{0\}, \mathcal{L})$. 
There is an analogue of this fact in the general setting. 

\begin{proposition}(\cite[2.3, 2.4]{N})\label{psdeffrel}
The line bundle $L=\pi^*L_0\otimes\mathcal{O}_X(D_h)$ is pseudo-effective if and only if the set $\Box_{\rm Nef}(L_0, h)$ is non-empty. 
In this case, we obtain the equation
\[
H^0(X, L)=\bigoplus_{m\in\Box_{\rm Nef}(L_0, h)\cap M}\chi^m\cdot\pi^*H^0(V, L_0\otimes\mathcal{L}^m). 
\]
\end{proposition}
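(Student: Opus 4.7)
The plan is to reduce the statement to two inputs: the toric weight decomposition of $\pi_*\mathcal{O}_X(D_h)$, and the classical fact (contained in Proposition \ref{bl}) that on the complex torus $V$ a line bundle admits a non-zero global section if and only if its Hermitian form is positive semi-definite, equivalently, the line bundle is nef.

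First I would establish the weight decomposition of the pushforward. On each affine open $\mathbb{T}_N(\sigma,\mathcal{L})$, the defining description yields
\[
\pi_*\mathcal{O}_{\mathbb{T}_N(\sigma,\mathcal{L})}=\bigoplus_{m\in\sigma^{\vee}\cap M}\mathcal{L}^m.
\]
Twisting by $D_h$: since $\sigma$ is smooth, $h|_\sigma$ is given by a unique lattice point, so a fiberwise toric computation (mimicking the usual toric-variety computation of $H^0(U_\sigma,\mathcal{O}(D_h))$) identifies the weight-$m$ summand of $\pi_*\mathcal{O}_X(D_h)$ with $\mathcal{L}^m$ when $\langle m,v\rangle\geq h(v)$ for all $v\in N_\mathbb{R}$, and with $0$ otherwise. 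Gluing over $\Sigma$ gives
\[
\pi_*\mathcal{O}_X(D_h)=\bigoplus_{m\in\Box_h\cap M}\mathcal{L}^m,
\]
where the weight-$m$ generator is identified with $\chi^m$. Combined with the projection formula and $\pi_*\mathcal{O}_X=\mathcal{O}_V$, this gives
\[
H^0(X,L)=\bigoplus_{m\in\Box_h\cap M}\chi^m\cdot\pi^*H^0(V,L_0\otimes\mathcal{L}^m).
\]
Proposition \ref{bl}(3) forces $H^0(V,L_0\otimes\mathcal{L}^m)=0$ whenever $L_0\otimes\mathcal{L}^m$ is not nef, so the sum collapses to $m\in\Box_{\mathrm{Nef}}(L_0,h)\cap M$, yielding the claimed identity.

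For the pseudo-effectivity equivalence, the easy ($\Leftarrow$) direction goes as follows: given $m_0\in\Box_{\mathrm{Nef}}(L_0,h)$, since $m_0\in\Box_h$ the meromorphic section $\chi^{m_0}$ has effective divisor on $X$, exhibiting $\pi^*\mathcal{L}^{-m_0}\otimes\mathcal{O}_X(D_h)$ as an $\mathbb{R}$-effective class; meanwhile $\pi^*(L_0\otimes\mathcal{L}^{m_0})$ is nef as the pullback of a nef class on $V$. Their tensor product is $L$, which is therefore pseudo-effective. The harder direction ($\Rightarrow$) is the main obstacle. I would argue by perturbation: for any ample $A$ on $X$ and any $\varepsilon>0$, the line bundle $L+\varepsilon A$ is big and hence has a non-trivial section of some positive multiple. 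Writing $L+\varepsilon A$ again in the standard form $\pi^*L_0'\otimes\mathcal{O}_X(D_{h'})$ (using \cite[2.3]{N}) and applying the weight decomposition just proved produces rational points $m_{k,\varepsilon}/k$ inside the corresponding $\Box_{\mathrm{Nef}}$. Taking a subsequential limit first as $k\to\infty$ (with $\varepsilon$ fixed) and then as $\varepsilon\to 0$ should yield a point of $\Box_{\mathrm{Nef}}(L_0,h)$.

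The central technical difficulty lies in this last limiting step: one has to show that $\Box_{\mathrm{Nef}}$ varies semicontinuously under the perturbation $L\rightsquigarrow L+\varepsilon A$ and that the accumulation point of the $m_{k,\varepsilon}/k$ still witnesses nefness of $L_0\otimes\mathcal{L}^{m_\infty}$ on $V$. This uses the compactness provided by Lemma \ref{BOX} together with the closedness of the nef cone ${\rm Nef}(V)\subset {\rm NS}(V)\otimes\mathbb{R}$ coming from Proposition \ref{bl}(3).
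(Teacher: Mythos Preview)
The paper does not give its own proof of this proposition: it is stated with the citation \cite[2.3, 2.4]{N} and used as a black box thereafter. So there is no in-paper argument to compare against; your task was effectively to reconstruct Nakayama's proof.

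Your reconstruction is sound and follows the standard line. The pushforward decomposition $\pi_*\mathcal{O}_X(D_h)=\bigoplus_{m\in\Box_h\cap M}\mathcal{L}^m$ is exactly what Nakayama records in \cite[IV, 2.3]{N}, and the projection formula reduces the $H^0$ claim to the torus. One small imprecision: you invoke Proposition~\ref{bl}(3) to kill the summands with $L_0\otimes\mathcal{L}^m$ not nef, but Proposition~\ref{bl}(3) only identifies the nef cone; what you actually need is the fact (also in \cite[Chapter~3]{BL}) that on a complex torus a line bundle with a non-zero section has positive semi-definite Hermitian form, i.e.\ effective $\Rightarrow$ nef. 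This is true and well known, just not literally the content of Proposition~\ref{bl}(3).

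For the pseudo-effectivity equivalence, your perturbation argument is correct and can be made precise without difficulty. A slightly cleaner bookkeeping than the two-parameter limit you describe: fix an ample line bundle $A=\pi^*A_0\otimes\mathcal{O}_X(D_g)$ on $X$; pseudo-effectivity of $L$ gives $H^0(X,\nu L\otimes A)\neq 0$ for all $\nu\gg 0$, and the $H^0$ decomposition you already established produces $m_\nu\in\Box_{\nu h+g}\cap M$ with $L_0^{\nu}\otimes A_0\otimes\mathcal{L}^{m_\nu}$ nef. Then $m_\nu/\nu$ lies in the compact set $\tfrac{1}{\nu}\Box_{\nu h+g}$, which converges to $\Box_h$; any accumulation point $m_\infty$ lies in $\Box_h$ and, by closedness of ${\rm Nef}(V)$, satisfies $L_0\otimes\mathcal{L}^{m_\infty}$ nef. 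This avoids having to put the $\mathbb{R}$-divisor $L+\varepsilon A$ itself into standard form.
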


In the following, we assume that $V$ is a complex torus. 

\begin{observation}\label{Xmf}
Here we rewrite the meromorphic function $\chi^m\cdot\pi^*f$ in Proposition \ref{psdeffrel} by using notations in Example \ref{example3_1}. 
Let $U$ be a sufficiently small open set in $V$ and $z\mapsto s^0(z)$ be such a local trivialization of $L_0$ on $U$ as in Proposition \ref{bl}. 
Under the local trivialization $z\mapsto \left(s^0\cdot\prod_{j=1}^n s^j\right)(z)$ of $L_0\otimes\mathcal{L}^m$, 
we may assume $f$ is written as
\[
f|_U(z)=\eta(z)\cdot\left(s^0\cdot\prod_{j=1}^n (s^j)^{\langle m, e_j\rangle}\right)(z)
\]
on $U$ for some holomorphic function $\eta$ on $U$. Since 
\[
\chi^m\cdot\pi^*f((x_j\cdot s_j(z))_j)=\chi^m((x_j\cdot s_j(z))_j)\cdot f(z)=\left(\prod_{j=1}^n(x_j)^{\langle m, e_j\rangle}\right)\eta(z)\cdot s^0(z)
\]
holds, it can be checked that $\chi^m\cdot\pi^*f$ is a meromorphic section of $\pi^*L_0$, indeed. 
Moreover we can check that it is an element of
$H^0(X, L)=H^0(X, \pi^*L_0\otimes\mathcal{O}_X(D_h))$, since $m$ is an element of $\Box_h$. 
\end{observation}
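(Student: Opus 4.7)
The plan is to unpack everything in a single local trivialization on $\pi^{-1}(U)$ and then read off both the asserted coordinate formula and the global holomorphicity.

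First, I would pin down trivializations. On a sufficiently small open $U\subset V$, Proposition \ref{bl} supplies the canonical frame $s^0$ of $L_0$ and, for each $j$, the canonical frame $s^j$ of $\mathcal{L}^{e^j}$; then $s^0\cdot\prod_{j=1}^n(s^j)^{\langle m,e_j\rangle}$ is a frame of $L_0\otimes\mathcal{L}^m$ over $U$, so any $f\in H^0(V,L_0\otimes\mathcal{L}^m)$ is represented there by a holomorphic coefficient $\eta\in\mathcal{O}(U)$. This immediately gives the expression for $f|_U$ stated in the observation.

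Second, I would compute on the dense open $\mathbb{T}_N(\{0\},\mathcal{L})|_U\subset\pi^{-1}(U)$ using the coordinates $(x_j\cdot s_j(z))_j$ from Example \ref{example3_1}. Since $s_j$ is the dual frame of $s^j$, one has the symbolic identity $s_j=(s^j)^{-1}$, so the definition of $\chi^m$ yields a factor $\prod_j(s^j(z))^{-\langle m,e_j\rangle}$ that cancels exactly against the $\prod_j(s^j(z))^{\langle m,e_j\rangle}$ coming from $\pi^*f$. The product collapses to $\prod_j x_j^{\langle m,e_j\rangle}\cdot\eta(z)\cdot s^0(z)$, which simultaneously verifies the stated coordinate formula and shows that $\chi^m\cdot\pi^*f$ is a section of $\pi^*L_0$ on this dense open.

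Third, I would promote the meromorphic section to a holomorphic section of $L=\pi^*L_0\otimes\mathcal{O}_X(D_h)$ on all of $X$. The toric orbit--cone correspondence, transplanted to each affine chart $\mathbb{T}_N(\sigma,\mathcal{L})$ via Section 2.3, gives $\mathrm{ord}_{\Gamma_v}(\chi^m)=\langle m,v\rangle$ for every $v\in\mathrm{Ver}(\Sigma)$. Since $\pi^*f$ is holomorphic, $\mathrm{div}(\chi^m\cdot\pi^*f)\geq\sum_v\langle m,v\rangle\,\Gamma_v$, and the condition $m\in\Box_h$ is exactly $\langle m,v\rangle\geq h(v)$ for every $v\in N_{\mathbb{R}}$, so $\mathrm{div}(\chi^m\cdot\pi^*f)+D_h\geq 0$, which is the desired holomorphicity.

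\emph{Main obstacle.} The cancellation in step two is book-keeping once the right trivializations are fixed. The real content sits in step three, namely identifying $\mathrm{ord}_{\Gamma_v}(\chi^m)=\langle m,v\rangle$ by changing variables on each affine chart $\mathbb{T}_N(\sigma,\mathcal{L})$ through the dual basis of $\sigma$, and checking that the presence of the base torus $V$ does not perturb the classical toric-variety computation transcribed in \cite{N}.
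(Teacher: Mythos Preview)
Your proposal is correct and follows essentially the same route as the paper: the observation itself already contains its proof inline, consisting of the trivialization of $f$, the cancellation computation for $\chi^m\cdot\pi^*f$, and the remark that $m\in\Box_h$ forces holomorphicity. Your step three spells out the order-of-vanishing argument $\mathrm{ord}_{\Gamma_v}(\chi^m)=\langle m,v\rangle$ more explicitly than the paper, which simply asserts ``we can check that it is an element of $H^0(X,L)$ \dots\ since $m$ is an element of $\Box_h$''; but the underlying justification is the same standard toric computation (and is revisited in Observation~\ref{fobs} via the canonical coordinates).
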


In Observation \ref{Xmf}, we calculated $\chi^m\cdot\pi^*f$ as a meromorphic section of $\pi^*L_0$. 
We can rewrite it as a holomorphic section of $\pi^*L_0\otimes\mathcal{O}_X(D_h)$ by using following {\it canonical local coordinates}. 

\begin{definition}\label{canonical_coordinate}
Let $\sigma$ be an element of $\Sigma_{\rm max}:=\{\sigma\in\Sigma\mid{\rm dim}\,\sigma=n\}$. 
Since the fan $\Sigma$ is smooth, there exists $v_1, v_2, \dots, v_n\in{\rm Ver}(\Sigma)$ such that 
$\sigma={\rm Cone}\{v_1, v_2, \dots, v_n\}$
and $v_1, v_2, \dots, v_n$ generates $N$. 
We call such $v_1, v_2, \dots, v_n$ $N$-minimal generators of $\sigma$. 

Let $v^1, v^2, \dots, v^n$ be the dual generators of $v_1, v_2, \dots, v_n$. 
Then the dual cone of $\sigma$ can be written as $\sigma^{\vee}={\rm Cone}\{v^1, v^2, \dots, v^n\}$. 
Fix a metric $h_{v^j}$ of $\mathcal{L}^{v^j}$ whose curvature form is a harmonic form with respect to the Euclidean metric for each $j$. 
Let $U$ be a sufficiently small open set in $V$. 
Let us fix such a local trivializations $z\mapsto t^j(z)$ of $\mathcal{L}^{v^j}$ on $U$ as in Proposition \ref{bl}, 
and the dual section $t_j$ of $t^j$ for $j=1, 2, \dots, n$.  
Using these notations, we can calculate 
\[
\mathbb{T}_N(\sigma, \mathcal{L})|_{\{z\}}=\left.{\rm Spec}\bigoplus_{a_1, a_2, \dots, a_n\geq 0}\mathcal{L}^{\sum_j a_jv^j}\right|_{\{z\}}
={\rm Spec}\,\mathbb{C}[t^1(z), t^2(z), \dots, t^n(z)]
\]
for $z\in U$. 
So, it turns out that $\mathbb{T}_N(\sigma, \mathcal{L})$ is a $\mathbb{C}^n$-bundle which $t_1, t_2, \dots, t_n$ gives a local trivialization on $U$. 
So, we can regard the map 
\[
 (x_1, x_2, \dots, x_n, z)\longmapsto (x_j\cdot t_j(z))_j\in\mathbb{T}_N(\sigma, \mathcal{L})|_{\{z\}}
\]
as a local coordinates system on $\mathbb{T}_N(\sigma, \mathcal{L})|_U$. 
We call this local coordinate system the canonical one of $\mathbb{T}_N(\sigma, \mathcal{L})|_U$ associated to the $N$-minimal generator $v_1, v_2, \dots, v_n$ of $\sigma$. 
\end{definition}

As it is clear from the definition, the canonical coordinates system of $\mathbb{T}_N(\sigma, \mathcal{L})|_U$ associated to the $N$-minimal generator $v_1, v_2, \dots, v_n$ of $\sigma$ depends on the choice of the metrics $\{h_{v^j}\}_j$. 
In the following, we fix basis $e^1, e^2, \dots, e^n$ of $M$ and a metric $h_{e^j}$ of $\mathcal{L}^{e^j}$ whose curvature form is a harmonic form with respect to the Euclidean metric for each $j$, 
and we always choose the metric $h_{e^1}^{\otimes a_1^j}\otimes h_{e^2}^{\otimes a_2^j}\otimes\cdots\otimes h_{e^n}^{\otimes a_n^j}$ for $h_{v^j}$, where $v^j=\sum_ka^j_ke^k$. 
By using this metric, we can say that the canonical coordinates system of $\mathbb{T}_N(\sigma, \mathcal{L})|_U$ associated to the $N$-minimal generator $v_1, v_2, \dots, v_n$ is uniquely determined. 

\begin{remark}
Let $v_1, v_2, \dots, v_n$ be $N$-minimal generators of $\sigma$, 
and $(x_1, x_2, \dots, x_n, z)$ be the canonical coordinates system of $\mathbb{T}_N(\sigma, \mathcal{L})|_U$ associated to $v_1, v_2, \dots, v_n$. 
Then, 
$
 \{x_j=0\}=\Gamma_{v_j} 
$
holds for $j=1, 2, \dots, n$ on $\mathbb{T}_N(\sigma, \mathcal{L})|_U$. 
\end{remark}

\begin{definition}
For $\sigma\in\Sigma_{\rm max}$, we denote by $m_{\sigma}\in M$ the point which satisfies $h(w)=\langle m_{\sigma}, w\rangle$ for all $w\in\sigma$. 
We call $\{m_\sigma\}_{\sigma}$ the Cartier data of $D_h$. 
\end{definition}

\begin{observation}\label{fobs} 
Let $\sigma$ be an element of $\Sigma_{\rm max}$, 
$v_1, v_2, \dots, v_n$ be $N$-minimal generators of $\sigma$, 
and $(x_1, x_2, \dots, x_n, z)$ be the canonical coordinates system of $\mathbb{T}_N(\sigma, \mathcal{L})|_U$ associated to $v_1, v_2, \dots, v_n$. 
In $\mathbb{T}_N(\sigma, \mathcal{L})|_U$, the map 
\[
 (x_1, x_2, \dots, x_n, z)\mapsto \prod_{j=1}^n(x_j)^{\langle m_\sigma, v_j\rangle}
\]
gives a local trivialization of $\mathcal{O}_X(D_h)$, where $\{m_\sigma\}_{\sigma}$ is the Cartier data of $D_h$. 
So, by using notations in Observation \ref{Xmf}, 
\[
  (x_1, x_2, \dots, x_n, z)\longmapsto \left(\prod_{j=1}^n(x_j)^{\langle m_\sigma, v_j\rangle}\right)\cdot s^0(z)
\]
gives a local trivialization of $L$. Under this trivialization, $\chi^m\cdot\pi^*f\in H^0(X, L)$ can be regarded as the holomorphic function 
\[
   (x_1, x_2, \dots, x_n, z)\longmapsto \left(\prod_{j=1}^n(x_j)^{\langle m-m_\sigma, v_j\rangle}\right)\cdot \eta(z)
\]
on $\mathbb{T}_N(\sigma, \mathcal{L})|_U$. 
\end{observation}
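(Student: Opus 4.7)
The plan is to verify the two assertions of the observation separately: first that $\prod_{j=1}^n x_j^{\langle m_\sigma, v_j\rangle}$ is a local frame of $\mathcal{O}_X(D_h)$, and then that $\chi^m\cdot\pi^*f$ has the claimed form with respect to the induced local frame of $L$.

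For the first assertion, I would start from the Remark immediately before the definition of the Cartier data, which identifies $\{x_j=0\}=\Gamma_{v_j}$ on $\mathbb{T}_N(\sigma,\mathcal{L})|_U$. Because $\sigma\in\Sigma_{\rm max}$ is spanned by the $N$-minimal generators $v_1,\dots,v_n$, these are the only prime divisors $\Gamma_v$ (with $v\in{\rm Ver}(\Sigma)$) that meet this chart, so $D_h|_{\mathbb{T}_N(\sigma,\mathcal{L})|_U}=\sum_{j=1}^n(-h(v_j))\Gamma_{v_j}$. Since $v_j\in\sigma$, the defining property of $m_\sigma$ gives $h(v_j)=\langle m_\sigma,v_j\rangle$, so the meromorphic function $\prod_j x_j^{\langle m_\sigma,v_j\rangle}$ has divisor exactly $-D_h$ on this chart. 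Viewing $\mathcal{O}_X(D_h)\subset \mathcal{M}_X$ in the standard way, this function is therefore a local generator, and tensoring with the frame $s^0$ of $\pi^*L_0$ from Proposition \ref{bl} yields the claimed frame of $L$.

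For the second assertion, I would re-derive the computation of Observation \ref{Xmf} directly in the canonical coordinates $(x_1,\dots,x_n,z)$ associated to $v_1,\dots,v_n$. Evaluated at $(x_j\cdot t_j(z))_j$, the character $\chi^m$ becomes $\prod_j x_j^{\langle m,v_j\rangle}\cdot\prod_j(t^j)^{-\langle m,v_j\rangle}(z)$, which is the direct analogue of the formula in Example \ref{example3_1} with $\{e_j\}$ replaced by $\{v_j\}$. Writing $v^j=\sum_k a_k^j\,e^k$ and using the consistent metric convention fixed after Definition \ref{canonical_coordinate}, the canonical trivializations of Proposition \ref{bl} satisfy $t^j=\prod_k(s^k)^{a_k^j}$; a short bookkeeping check using $\langle v^j,v_i\rangle=\delta_i^j$ gives $\prod_j(t^j)^{\langle m,v_j\rangle}=\prod_k(s^k)^{\langle m,e_k\rangle}$, which means that the local function $\eta(z)$ of Observation \ref{Xmf} is untouched by the change of basis. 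Dividing the resulting meromorphic expression $\prod_j x_j^{\langle m,v_j\rangle}\cdot\eta(z)\cdot s^0(z)$ by the local frame $s^0(z)\cdot\prod_j x_j^{\langle m_\sigma,v_j\rangle}$ of $L$ produces $\prod_j x_j^{\langle m-m_\sigma,v_j\rangle}\cdot\eta(z)$. Holomorphy is automatic: since $m\in\Box_h$, we have $\langle m,v_j\rangle\geq h(v_j)=\langle m_\sigma,v_j\rangle$, so every exponent is non-negative.

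The main obstacle is the bookkeeping in the second step: ensuring that the natural trivializations arising from Observation \ref{Xmf} (set up in the $\{e_j\}$-basis) are genuinely compatible with those used in the canonical coordinates associated to the $\{v_j\}$-basis, with no stray constant getting absorbed into $\eta$. The author's deliberate choice of metric $h_{v^j}=\bigotimes_k h_{e^k}^{\otimes a_k^j}$ on $\mathcal{L}^{v^j}$, which is exactly what makes $t^j=\prod_k(s^k)^{a_k^j}$ hold on the nose, is the key input that makes this identification go through cleanly.
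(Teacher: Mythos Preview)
Your proposal is correct and supplies precisely the details that the paper leaves implicit: in the paper this statement is recorded as an \emph{observation} with no separate proof, the content being regarded as an immediate consequence of the definitions, the Remark identifying $\{x_j=0\}=\Gamma_{v_j}$, and the computation in Observation~\ref{Xmf}. Your two-step verification (computing the divisor of $\prod_j x_j^{\langle m_\sigma,v_j\rangle}$ on the chart, then tracking the change of basis via $t^j=\prod_k(s^k)^{a_k^j}$ to see that $\eta$ is unchanged) is exactly the intended unpacking, and your final holomorphy check from $m\in\Box_h$ is the right justification for the last clause.
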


The projective line $\mathbb{P}^1=\{[z; w]\}$ can be regarded as the union of two disks $\{[z; 1]\mid |z|\leq 1\}$ and
$\{[1; w]\mid |w|\leq 1\}$ with radius $1$. 
The following proposition is an analogy of this fact. 

\begin{proposition}\label{1cover}
Let $U$ be a sufficiently small open set in $V$, $z_0$ be a point in $U$, 
$\sigma$ be an element of $\Sigma_{\rm max}$, 
$v_1, v_2, \dots, v_n$ be $N$-minimal generators of $\sigma$, 
and $(x_1, x_2, \dots, x_n, z)$ be the canonical coordinates system of $\mathbb{T}_N(\sigma, \mathcal{L})|_U$ associated to $v_1, v_2, \dots, v_n$. 
We set 
\[
K_{\sigma, z_0}=\{(x_1, x_2, \dots, x_n, z_0)\in\mathbb{T}_N(\sigma, \mathcal{L})\mid \forall j\in \{1, 2, \dots, n\}, |x_j|\leq 1\}. 
\]
Then, 
\[
 \bigcup_{\sigma\in\Sigma_{\rm max}}K_{\sigma, z_0} =\pi^{-1}(z_0)
\]
holds. 
\end{proposition}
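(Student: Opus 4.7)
The plan is to reduce the statement to the standard covering of a smooth complete toric variety by the unit polydisks of its maximal affine charts, and exploit the completeness of the fan $\Sigma$. First I would observe that each $K_{\sigma,z_0}$ is a closed polydisk and $\Sigma_{\rm max}$ is finite, so $\bigcup_{\sigma}K_{\sigma,z_0}$ is a closed subset of the fiber $\pi^{-1}(z_0)$; since the big torus orbit $\mathbb{T}_N(\{0\},\mathcal{L})|_{z_0}$ is dense in $\pi^{-1}(z_0)$, it is enough to show this orbit is contained in the union.

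Next I would introduce a tropicalization map on the fiber. Using the frames $s_j$ of Example \ref{example3_1}, write any $p$ in the big torus orbit as $p=(y_j\cdot s_j(z_0))_j$ with $(y_1,\ldots,y_n)\in(\mathbb{C}^*)^n$, and set
\[
\lambda(p)=\sum_{j=1}^{n}(-\log|y_j|)\,e_j\in N_{\mathbb{R}}.
\]
For $\sigma\in\Sigma_{\rm max}$ with $N$-minimal generators $v_1,\ldots,v_n$ and dual generators $v^1,\ldots,v^n\in M$, the normalization $h_{v^j}=\bigotimes_{k}h_{e^k}^{\otimes\langle v^j,e_k\rangle}$ fixed just before the statement forces the canonical frames to satisfy $t_j(z_0)=\prod_{k}s_k(z_0)^{\langle v^j,e_k\rangle}$, and consequently the canonical coordinates $(x_1,\ldots,x_n)$ of $p$ satisfy $x_j=\prod_{k}y_k^{\langle v^j,e_k\rangle}$, i.e.,
\[
-\log|x_j|=\langle v^j,\lambda(p)\rangle.
\]
Since $\Sigma$ is smooth projective, hence complete, we have $N_{\mathbb{R}}=\bigcup_{\sigma\in\Sigma_{\rm max}}\sigma$; so pick a maximal cone $\sigma$ with $\lambda(p)\in\sigma$. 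Writing $\lambda(p)=\sum_{j}c_j v_j$ with $c_j\geq 0$ then gives $-\log|x_j|=\langle v^j,\lambda(p)\rangle=c_j\geq 0$, hence $|x_j|\leq 1$ for every $j$, so $p\in K_{\sigma,z_0}$.

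The main obstacle I expect is not conceptual but bookkeeping: justifying the identity $t_j(z_0)=\prod_k s_k(z_0)^{\langle v^j,e_k\rangle}$ rigorously so that the tropicalization $\lambda(p)$ is genuinely intrinsic and the resulting formula for $-\log|x_j|$ holds on the nose in every chart. This relies on the uniqueness (up to a scalar of modulus one) of the harmonic-curvature frame in Proposition \ref{bl}(4) together with the specific multiplicative metric normalization fixed just before the statement. Once this compatibility is in place, the rest is a three-line consequence of the completeness of $\Sigma$ and the definition of the dual cone.
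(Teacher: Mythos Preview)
Your proposal is correct and matches the paper's argument: reduce to the dense open orbit by closure, define a tropicalization point in $N_{\mathbb{R}}$, use completeness of $\Sigma$ to find a maximal cone containing it, and read off $|x_j|\leq 1$ from the dual-basis expansion. The only cosmetic difference is that the paper writes the starting point in the canonical coordinates of an auxiliary $\tau\in\Sigma_{\rm max}$ rather than via the $e_j$-frames of Example~\ref{example3_1}, so the frame compatibility you flag as the main obstacle appears there only as the (stated but equally routine) monomial coordinate change $x_j=\prod_k y_k^{\langle v^j,u_k\rangle}$ between two maximal charts.
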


\begin{proof}
Since $\overline{\mathbb{T}_N(\{0\}, \mathcal{L})|_{\{z_0\}}}=\pi^{-1}(z_0)$, it is sufficient to show that
\[
 \bigcup_{\sigma\in\Sigma_{\rm max}}K_{\sigma, z_0} \supset\mathbb{T}_N(\{0\}, \mathcal{L})|_{\{z_0\}}. 
\]
\par
Let us fix a point $y_0\in\mathbb{T}_N(\{0\}, \mathcal{L})|_{\{z_0\}}$ 
and an element $\tau\in\Sigma_{\rm max}$. 
Let $u_1, u_2, \dots, u_n$ be $N$-minimal generators of $\tau$, 
and $(y_1, y_2, \dots, y_n, z)$ be the canonical coordinates system of $\mathbb{T}_N(\tau, \mathcal{L})|_U$ associated to $u_1, u_2, \dots, u_n$. 
In this coordinates system, assume $y_0$ is written as $((y_0)_1, (y_0)_2, \dots, (y_0)_n, z_0)$. 
Since $y_0\in\mathbb{T}_N(\{0\}, \mathcal{L})$, it turns out that $(y_0)_j\not=0$ for all $j$. 
Thus, 
$
w_0=-\sum_{j=1}^n\log |(y_0)_j|\cdot u_j 
$
defines a point of $N_{\mathbb{R}}$. 
Since $\Sigma$ is complete, there exists an element $\sigma\in\Sigma_{\rm max}$ such that $n_0\in\sigma$. 
Let $v_1, v_2, \dots, v_n$ be $N$-minimal generators of $\sigma$, 
and $(x_1, x_2, \dots, x_n, z)$ be the canonical coordinates system of $\mathbb{T}_N(\sigma, \mathcal{L})|_U$ associated to $v_1, v_2, \dots, v_n$. 
In this coordinates system, $y_0$ can be written as 
\[
 y_0=\left(\left(\prod_{k=1}^n((y_0)_k)^{\langle v^j, u_k\rangle}\right)_j, z_0\right), 
\]
where $v^1, v^2, \dots, v^n$ is the dual basis of $v_1, v_2, \dots, v_n$. 
On the other hands, $w_0$ can be rewritten as 
\[
 w_0=-\sum_{k=1}^n\log |(y_0)_k|\cdot u_k=-\sum_{k=1}^n\sum_{j=1}^n\log |(y_0)_k|\langle v^j, u_k\rangle\cdot v_j
=-\sum_{j=1}^n\log \left|\prod_{k=1}^n((y_0)_k)^{\langle v^j, u_k\rangle}\right|\cdot v_j. 
\]
Since we have chosen $\sigma$ as the condition $n_0\in\sigma$ holds, 
$-\log |\prod_{k=1}^n((y_0)_k)^{\langle v^j, u_k\rangle}|\geq 0$, or $|\prod_{k=1}^n((y_0)_k)^{\langle v^j, u_k\rangle}|\leq 1$
holds for all $j\in\{1, 2, \dots, n\}$. 
We thus obtain $y_0\in K_{\sigma, z_0}$, which proves the proposition. 
\end{proof}

\subsection{Modifications}
Let $\Sigma$ be a smooth projective fan of the $n$-dimensional lattice $N$. 
Here we fix a smooth subdivision fan $\tilde{\Sigma}$ of $\Sigma$, 
and consider a toric bundle $\tilde{X}=\mathbb{T}_N(\tilde{\Sigma}, \mathcal{L})$ and the canonical morphism $\mu\colon \tilde{X}\rightarrow X$.  
As in the case of toric varieties, $\mu\colon \tilde{X}\rightarrow X$ is a proper modification of $X$. 
From this section, we use letters with subscripts such as $v_1, v_2, \dots, v_n$ for generators of $N$, 
and we denote the dual generators by the same letters with superscripts, such as $v^1, v^2, \dots, v^n$, throughout this paper. 

First of all, we obtain the following result by simple computations. 

\begin{lemma}\label{mu}
Let $\sigma\in\Sigma_{\rm max}, \tilde{\sigma}\in\tilde{\Sigma}_{\rm max}$ be cones such that $\tilde{\sigma}\subset\sigma$, 
$v_1, v_2, \dots, v_n$ be $N$-minimal generators of $\sigma$, 
and $\tilde{v}_1, \tilde{v}_2, \dots, \tilde{v}_n$ be $N$-minimal generators of $\tilde{\sigma}$. 
We denote by $(x_1, x_2, \dots, x_n, z)$ and $(\tilde{x}_1, \tilde{x}_2, \dots, \tilde{x}_n, z)$ 
the canonical coordinates systems of $\mathbb{T}_N(\sigma, \mathcal{L})|_U$ and $\mathbb{T}_N(\tilde{\sigma}, \mathcal{L})|_U$, respectively. 
In these coordinates, the morphism $\mu\colon \tilde{X}\rightarrow X$ can be written as
\[
\mu(\tilde{x}_1, \tilde{x}_2, \dots, \tilde{x}_n, z)=\left(\left(\prod_{k=1}^n(\tilde{x}_k)^{\langle v^j, \tilde{v}_k\rangle}\right)_j, \ z\right). 
\]
\end{lemma}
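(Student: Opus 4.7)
My plan is to perform the computation on the dense open subset $\mathbb{T}_N(\{0\},\mathcal{L})|_U$, on which both canonical coordinate systems restrict to honest coordinates and on which $\mu$ is manifestly the identity, and then to extend the resulting formula by continuity to all of $\mathbb{T}_N(\tilde\sigma,\mathcal{L})|_U$. Thus the whole lemma reduces to comparing the $\sigma$- and $\tilde\sigma$-coordinates of a single point of the big torus.

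The dual-lattice input is the following. Since $\tilde\sigma\subset\sigma$, dually $\sigma^{\vee}\subset\tilde\sigma^{\vee}$, so each $v^j\in\sigma^{\vee}$ in fact lies in $\tilde\sigma^{\vee}$. Because $\tilde v_1,\dots,\tilde v_n$ is a $\mathbb{Z}$-basis of $N$ with dual basis $\tilde v^1,\dots,\tilde v^n$, one obtains the decomposition
\[
v^j \;=\; \sum_{k=1}^n \langle v^j,\tilde v_k\rangle\,\tilde v^{\,k},
\]
in which the coefficients $\langle v^j,\tilde v_k\rangle$ are non-negative integers (non-negativity because $v^j\in\tilde\sigma^{\vee}$ and $\tilde v_k\in\tilde\sigma$, integrality because $\tilde v^{\,k}$ is the dual $\mathbb{Z}$-basis).

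Next I would invoke the global metric convention fixed at the end of Definition \ref{canonical_coordinate}: every $\mathcal{L}^{v^j}$ carries the tensor-product metric induced from the $h_{e^k}$'s, and the same convention applied to the decomposition above realises it as the tensor product of the $\mathcal{L}^{\tilde v^{\,k}}$'s with exponents $\langle v^j,\tilde v_k\rangle$. Since the harmonic trivialisation of a tensor product of line bundles equipped with the tensor product of harmonic metrics is the product of the harmonic trivialisations of the factors, this forces
\[
t^j(z) \;=\; \prod_{k=1}^n \bigl(\tilde t^{\,k}(z)\bigr)^{\langle v^j,\tilde v_k\rangle}
\]
on $U$. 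A point of $\mathbb{T}_N(\{0\},\mathcal{L})|_z$ in $\tilde\sigma$-coordinates is the algebra homomorphism sending $\tilde t^{\,k}(z)\mapsto\tilde x_k$; applying it to both sides of the above identity immediately yields
\[
x_j \;=\; \prod_{k=1}^n \tilde x_k^{\langle v^j,\tilde v_k\rangle},
\]
which is precisely the asserted formula. Since all exponents $\langle v^j,\tilde v_k\rangle$ are non-negative integers, the right-hand side is regular on all of $\mathbb{T}_N(\tilde\sigma,\mathcal{L})|_U$, so the identity extends from the dense torus by continuity.

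The only place I expect any friction is verifying that the two harmonic trivialisations $t^j$ and $\prod_k(\tilde t^{\,k})^{\langle v^j,\tilde v_k\rangle}$ of $\mathcal{L}^{v^j}$ agree on the nose rather than merely up to a unitary scalar; this is precisely what the uniform metric normalisation stipulated at the end of Definition \ref{canonical_coordinate} is designed to guarantee. With that convention in hand, the computation is just the elementary contraction identity $\sum_k\langle v^j,\tilde v_k\rangle\,\langle\tilde v^{\,k},e_l\rangle=\langle v^j,e_l\rangle$ transported through the trivialisations.
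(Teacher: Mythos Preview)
Your proof is correct and spells out in full the ``simple computations'' that the paper leaves implicit (the paper gives no proof beyond the introductory sentence). The argument via the dual-basis expansion $v^j=\sum_k\langle v^j,\tilde v_k\rangle\,\tilde v^{\,k}$, the induced identity of harmonic trivialisations guaranteed by the metric convention at the end of Definition~\ref{canonical_coordinate}, and extension from the dense torus is exactly the intended unwinding.
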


Lemma \ref{mu} immediately implies the following corollary. 

\begin{corollary}\label{jvj}
For $j\in\{1, 2, \dots, n\}$, there exists a subset $J_{v_j}\subset\{1, 2, \dots, n\}$ such that 
$
\mu^*\Gamma_{v_j}=\bigcup_{k\in J_{v_j}}\{\tilde{x}_k=0\}
$
in $\mathbb{T}_N(\tilde{\sigma}, \mathcal{L})|_U$. 
\end{corollary}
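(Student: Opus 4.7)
The plan is to unwind the explicit local formula from Lemma \ref{mu} and read off the zero set. Since $\Gamma_{v_j}$ is cut out locally on $\mathbb{T}_N(\sigma, \mathcal{L})|_U$ by the single coordinate equation $x_j = 0$, the set-theoretic preimage $\mu^{-1}(\Gamma_{v_j})$ inside $\mathbb{T}_N(\tilde{\sigma}, \mathcal{L})|_U$ is the zero locus of the pullback function $x_j\circ\mu$. Lemma \ref{mu} expresses this pullback as the monomial
\[
x_j\circ\mu \;=\; \prod_{k=1}^n \tilde{x}_k^{\langle v^j,\, \tilde{v}_k \rangle}.
\]

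The one nontrivial point I would verify carefully is that every exponent $\langle v^j, \tilde{v}_k \rangle$ is a non-negative integer, so that the above is a genuine monomial rather than a Laurent monomial. Integrality is immediate since $v^j \in M$ and $\tilde{v}_k \in N$. For non-negativity, recall that $v^1, \dots, v^n$ is the dual basis of the $N$-minimal generators $v_1, \dots, v_n$ of $\sigma$, so $\sigma^{\vee} = \mathrm{Cone}\{v^1, \dots, v^n\}$; equivalently $\langle v^j, w \rangle \geq 0$ for every $w \in \sigma$. Because the lemma hypothesis gives $\tilde{\sigma} \subset \sigma$, and each primitive generator $\tilde{v}_k$ lies in $\tilde{\sigma}$, it follows that $\langle v^j, \tilde{v}_k \rangle \geq 0$.

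With that in place, the rest is bookkeeping. I would define
\[
J_{v_j} \;=\; \{\, k \in \{1, 2, \dots, n\} \mid \langle v^j, \tilde{v}_k \rangle > 0 \,\},
\]
so that factors indexed by $k \notin J_{v_j}$ contribute the trivial factor $\tilde{x}_k^0 = 1$ to the monomial $x_j\circ\mu$. Consequently, $x_j\circ\mu$ vanishes at a point of $\mathbb{T}_N(\tilde{\sigma}, \mathcal{L})|_U$ if and only if at least one $\tilde{x}_k$ with $k \in J_{v_j}$ vanishes, which is exactly the set $\bigcup_{k \in J_{v_j}} \{\tilde{x}_k = 0\}$.

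There is essentially no serious obstacle: the corollary is a clean consequence of Lemma \ref{mu} together with the cone-theoretic observation that the inclusion $\tilde{\sigma} \subset \sigma$ forces the dual pairings $\langle v^j, \tilde{v}_k\rangle$ to be non-negative. The only subtlety worth flagging is that the statement is purely set-theoretic; if one later needed $\mu^{*}\Gamma_{v_j}$ as an effective divisor with multiplicities, the same formula reads off the multiplicity of $\{\tilde{x}_k = 0\}$ as exactly $\langle v^j, \tilde{v}_k\rangle$ for $k \in J_{v_j}$.
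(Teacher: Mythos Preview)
Your proof is correct and is exactly the argument the paper has in mind: the paper simply states that the corollary follows immediately from Lemma \ref{mu}, and your write-up is the natural unwinding of that implication, including the key observation that $\tilde{\sigma}\subset\sigma$ forces $\langle v^j,\tilde{v}_k\rangle\geq 0$. Your description of $J_{v_j}$ also matches the paper's Remark \ref{hyouji_jvj}, since non-negativity makes the conditions $\langle v^j,\tilde{v}_k\rangle\neq 0$ and $\langle v^j,\tilde{v}_k\rangle>0$ equivalent.
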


\begin{remark}\label{hyouji_jvj}
For Corollary \ref{jvj}, the set $J_{v_j}$ can be written as
\[
 J_{v_j}=\{k\in\{1, 2, \dots, n\}\mid\langle v^j, \tilde{v}_k\rangle\not=0\}. 
\]
\end{remark}

For $\sigma\in\Sigma_{\rm max}$, we define the set $\tilde{\Sigma}_\sigma$ by
$\tilde{\Sigma}_\sigma:=\{\tilde{\sigma}\in\tilde{\Sigma}\mid \tilde{\sigma}\subset\sigma\}, $
and we denote by $(\tilde{\Sigma}_\sigma)_{\rm max}$ the set $\{\tilde{\sigma}\in\tilde{\Sigma}_\sigma\mid {\rm dim}\,\tilde{\sigma}=n\}$. 
By using the expression of $\mu$ in Lemma \ref{mu}, we can get the following lemma. 

\begin{lemma}\label{coord}
Fix a point $z_0\in U$, a set $I\subset\{1, 2, \dots, n\}$, and a cone $\sigma\in\Sigma_{\rm max}$. 
Denote by $W_{I, \sigma, z_0}$ the set 
\[
\{(x_1, x_2, \dots, x_n, z_0)\in\mathbb{T}_N(\sigma, \mathcal{L})\mid \forall j\in I, |x_j|\leq 1,\  \forall j\in \{1, 2, \dots, n\}, x_j\not=0\}, 
\]
and by $W_{I, \tilde{\sigma}, z_0}$ the set 
\[
\{(\tilde{x}_1, \tilde{x}_2, \dots, \tilde{x}_n, z_0)\in\mathbb{T}_N(\tilde{\sigma}, \mathcal{L})
\mid \forall k\in \cup_{j\in I}J_{v_j}, |\tilde{x}_k|\leq 1, \  \forall j\in \{1, 2, \dots, n\}, \tilde{x}_j\not=0\}
\]
for each $\tilde{\sigma}\in(\tilde{\Sigma}_\sigma)_{\rm max}$. 
Then, 
\[
\mu\left(\bigcup_{\tilde{\sigma}\in(\tilde{\Sigma}_\sigma)_{\rm max}}W_{I, \tilde{\sigma}, z_0}\right)=W_{I, \sigma, z_0}
\]
holds.  
\end{lemma}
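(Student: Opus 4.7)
I would prove the set equality by establishing the two inclusions separately. For the forward direction $\mu(\bigcup_{\tilde{\sigma}\in(\tilde{\Sigma}_\sigma)_{\rm max}} W_{I,\tilde{\sigma},z_0})\subset W_{I,\sigma,z_0}$ I will evaluate $\mu$ using Lemma \ref{mu}. Starting from $\tilde{y}_0=(\tilde{x}_1,\ldots,\tilde{x}_n,z_0)\in W_{I,\tilde{\sigma},z_0}$, one gets $\mu(\tilde{y}_0)=(x_1,\ldots,x_n,z_0)$ with $x_j=\prod_k \tilde{x}_k^{\langle v^j,\tilde{v}_k\rangle}$. All exponents are nonnegative integers (since $\tilde{v}_k\in\sigma$ and $v^j\in\sigma^\vee$), and are strictly positive exactly when $k\in J_{v_j,\tilde{\sigma}}$ by Remark \ref{hyouji_jvj}. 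The condition $\tilde{x}_k\neq 0$ forces $x_j\neq 0$, and for $j\in I$ the inclusion $J_{v_j,\tilde{\sigma}}\subset\bigcup_{i\in I}J_{v_i,\tilde{\sigma}}$ together with $|\tilde{x}_k|\leq 1$ yields $|x_j|\leq 1$.

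For the reverse inclusion I will take $y_0=(x_1,\ldots,x_n,z_0)\in W_{I,\sigma,z_0}$. Since every $x_j\neq 0$, $y_0$ lies in $\mathbb{T}_N(\{0\},\mathcal{L})|_{z_0}$, on which $\mu$ is an isomorphism, and hence has a unique lift $\tilde{y}_0\in\tilde{X}$. Following the tropicalisation idea of Proposition \ref{1cover} I set $w_0:=-\sum_j \log|x_j|\,v_j\in N_{\mathbb{R}}$. A direct application of Lemma \ref{mu} gives $-\log|\tilde{x}_k|=\langle\tilde{v}^k,w_0\rangle$ in every chart, so $y_0\in W_{I,\sigma,z_0}$ reads $\langle v^j,w_0\rangle\geq 0$ for $j\in I$, while the goal $\tilde{y}_0\in W_{I,\tilde{\sigma}_*,z_0}$ amounts to $\langle\tilde{v}^k,w_0\rangle\geq 0$ for every $k\in K_{\tilde{\sigma}_*}:=\{k:\tilde{v}_k\notin F\}$, where $F:={\rm Cone}\{v_l:l\notin I\}$ (I use Remark \ref{hyouji_jvj} to identify $K_{\tilde{\sigma}_*}$ with $\bigcup_{j\in I}J_{v_j,\tilde{\sigma}_*}$).

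To produce a suitable $\tilde{\sigma}_*$, I will pick a point $u_0$ in the relative interior of $F$ generic enough to lie in the relative interior of some maximal cone $\tilde{F}_0$ of the subdivision of $F$ induced by $\tilde{\Sigma}_\sigma$. Setting $w_\lambda:=\sum_{j\in I}\langle v^j,w_0\rangle\,v_j+\lambda u_0\in\sigma$ for $\lambda>0$, the finiteness of $\tilde{\Sigma}_\sigma$ together with $w_\lambda/\lambda\to u_0$ forces $w_\lambda$ into a fixed $\tilde{\sigma}_*\in(\tilde{\Sigma}_\sigma)_{\rm max}$ containing $\tilde{F}_0$ as a face, for all sufficiently large $\lambda$; consequently the face $\tilde{F}_{\tilde{\sigma}_*}:=F\cap\tilde{\sigma}_*$ has full dimension $\dim F$ and ${\rm span}(\tilde{F}_{\tilde{\sigma}_*})={\rm span}(F)$. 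Since $w_0-w_\lambda\in{\rm span}(F)$ and $\tilde{v}^k$ vanishes on ${\rm span}(\tilde{F}_{\tilde{\sigma}_*})={\rm span}\{\tilde{v}_l:l\notin K_{\tilde{\sigma}_*}\}$ for each $k\in K_{\tilde{\sigma}_*}$ by duality, I conclude $\langle\tilde{v}^k,w_0\rangle=\langle\tilde{v}^k,w_\lambda\rangle\geq 0$. The main obstacle will be this combinatorial step of arranging the full-dimensionality of $\tilde{F}_{\tilde{\sigma}_*}$: unlike in Proposition \ref{1cover}, $w_0$ need not lie in $\sigma$, so one cannot simply place it inside a single $\tilde{\sigma}$; the generic-$u_0$-plus-large-$\lambda$ trick is what lets the discrepancy $w_0-w_\lambda\in{\rm span}(F)$ be absorbed without affecting the relevant functionals $\tilde{v}^k$.
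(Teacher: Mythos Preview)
Your proof is correct and follows precisely the tropicalisation idea the paper indicates (the paper only says ``this lemma can be proved in the almost same way as those used in Proposition~\ref{1cover}'' and gives no further details). Your forward inclusion is routine, and your treatment of the reverse inclusion correctly identifies the one genuine difficulty that is absent in Proposition~\ref{1cover}: since only the $I$-components of $w_0=-\sum_j\log|x_j|\,v_j$ are nonnegative, $w_0$ need not lie in $\sigma$, so one cannot simply pick a $\tilde{\sigma}$ containing $w_0$. Your device of replacing $w_0$ by $w_\lambda=\sum_{j\in I}\langle v^j,w_0\rangle v_j+\lambda u_0\in\sigma$ with $u_0$ generic in the relative interior of $F={\rm Cone}\{v_l:l\notin I\}$, and then arguing that for large $\lambda$ the cone $\tilde{\sigma}_*$ containing $w_\lambda$ must also contain a full-dimensional face of the induced subdivision of $F$, is exactly the right fix; the key identity $\langle\tilde v^k,w_0\rangle=\langle\tilde v^k,w_\lambda\rangle$ for $k\in K_{\tilde{\sigma}_*}$ then reduces the claim to $w_\lambda\in\tilde{\sigma}_*$. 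One minor point: the assertion ``for all sufficiently large $\lambda$'' is slightly stronger than what your argument actually yields or needs---a subsequence argument gives a single $\lambda$ and a single $\tilde{\sigma}_*$ containing $\tilde{F}_0$, which is already enough.
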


This lemma can be proved in the almost same way as those used in Lemma \ref{1cover}. 
Applying this lemma with $I=\{1, 2, \dots, n\}$, we obtain the next corollary. 

\begin{corollary}\label{coord1}
Here we use notations in Lemma \ref{coord}. 
Denote by $K_\sigma$ the set 
\[
\{(x_1, x_2, \dots, x_n, z)\in\mathbb{T}_N(\sigma, \mathcal{L})|_{\overline{U}}\mid \forall j\in \{1, 2, \dots, n\}, |x_j|\leq 1\}
\]
and by $K_{\tilde{\sigma}}$ the set 
\[
\{(\tilde{x}_1, \tilde{x}_2, \dots, \tilde{x}_n, z)\in\mathbb{T}_N(\tilde{\sigma}, \mathcal{L})|_{\overline{U}}\mid \forall j\in\{1, 2, \dots, n\}, |\tilde{x}_j|\leq 1\}
\]
for each $n$-dimensional cone $\tilde{\sigma}\in\tilde{\Sigma}_\sigma$. 
Then, 
\[
\mu\left(\bigcup_{\tilde{\sigma}\in(\tilde{\Sigma}_\sigma)_{\rm max}}K_{\tilde{\sigma}}\right)=K_\sigma
\]
holds. 
\end{corollary}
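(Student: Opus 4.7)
The plan is to deduce the corollary from Lemma \ref{coord} applied with $I=\{1,2,\dots,n\}$ by a density/closure argument, together with the explicit coordinate expression of $\mu$ from Lemma \ref{mu}.

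For the inclusion $\mu\!\left(\bigcup_{\tilde\sigma\in(\tilde\Sigma_\sigma)_{\rm max}} K_{\tilde\sigma}\right)\subset K_\sigma$, I would invoke Lemma \ref{mu}: in the canonical coordinates, $\mu$ is given by $x_j=\prod_{k=1}^n(\tilde x_k)^{\langle v^j,\tilde v_k\rangle}$. Since $\tilde v_k\in\tilde\sigma\subset\sigma$ and $v^j\in\sigma^{\vee}$, every exponent $\langle v^j,\tilde v_k\rangle$ is a non-negative integer, so $|\tilde x_k|\leq 1$ for all $k$ immediately gives $|x_j|\leq 1$ for all $j$. (At points where some $\tilde x_k=0$ with zero exponent, the usual convention $\tilde x_k^0=1$ still yields $|x_j|\leq 1$.)

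For the reverse inclusion $K_\sigma\subset \mu\!\left(\bigcup_{\tilde\sigma} K_{\tilde\sigma}\right)$, I would specialize Lemma \ref{coord} to $I=\{1,2,\dots,n\}$. By Remark \ref{hyouji_jvj} the union $\bigcup_{j\in I}J_{v_j}$ equals $\{1,2,\dots,n\}$ (otherwise some $\tilde v_k$ would be orthogonal to all $v^j$, contradicting that $v^1,\dots,v^n$ span $M_{\mathbb R}$). Hence the $W_{I,\tilde\sigma,z_0}$ is exactly the subset of $K_{\tilde\sigma}\cap\pi^{-1}(z_0)$ where no coordinate vanishes, and similarly for $W_{I,\sigma,z_0}$. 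Taking the union of Lemma \ref{coord} over all $z_0\in\overline U$ shows that $\mu$ maps the ``open part'' of $\bigcup_{\tilde\sigma} K_{\tilde\sigma}$ (where all $\tilde x_k\neq 0$) onto the corresponding open part of $K_\sigma$.

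To conclude, observe that $\bigcup_{\tilde\sigma\in(\tilde\Sigma_\sigma)_{\rm max}} K_{\tilde\sigma}$ is a finite union of compact sets (products of closed polydiscs with $\overline U$), hence compact, so its image under the continuous map $\mu$ is closed. That image contains the open part of $K_\sigma$, which is dense in $K_\sigma$ (the locus $\{x_j\neq 0\text{ for all }j\}$ is dense in the closed polydisc $\{|x_j|\leq 1\}$). Therefore the image contains all of $K_\sigma$, finishing the proof. The only mildly delicate step is the verification that $\bigcup_{j\in I}J_{v_j}=\{1,\dots,n\}$ when $I=\{1,\dots,n\}$, so that Lemma \ref{coord} produces the full open part; everything else is formal from compactness and Lemma \ref{mu}.
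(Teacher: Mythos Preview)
Your proposal is correct and follows essentially the same approach as the paper, which simply states that the corollary is obtained by applying Lemma \ref{coord} with $I=\{1,2,\dots,n\}$. You have filled in the details the paper leaves implicit: the verification that $\bigcup_{j}J_{v_j}=\{1,\dots,n\}$, the forward inclusion via Lemma \ref{mu}, and the compactness/density step needed to pass from the $W$-sets (where no coordinate vanishes) to the closed polydiscs $K_\sigma$ and $K_{\tilde\sigma}$.
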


\subsection{Convex subsets of $M$}
Let $\Sigma$ be a smooth projective fan of the $n$-dimensional lattice $N$,  $\sigma\in \Sigma$ be a $n$-dimensional cone, 
$v_1, v_2, \dots, v_n$ be $N$-minimal generators of $\sigma$, 
and $(x_1, x_2, \dots, x_n, z)$ be the canonical coordinates system of $\mathbb{T}_N(\sigma, \mathcal{L})|_U$ associated to $v_1, v_2, \dots, v_n$, 
where $U$ is a sufficiently small open set in $V$. 
\begin{definition}
For $A\subset\sigma^{\vee}$, we denote by $\overline{\overline{A}}$ the set 
\[
\{m\in\sigma^{\vee}\mid \forall w\in\sigma, \min_{m'\in A}\langle m', w\rangle\leq\langle m, w\rangle\}. 
\]
When $A=\emptyset$, we formally regards $\overline{\overline{\emptyset}}$ as $\sigma^\vee$. 
\end{definition} 

\begin{definition}
Let $m_\sigma$ be an element of the Cartier data $D_h$ which is associated to $\sigma$. 
We denote by $S(L_0, h)_\sigma$ the subset 
$
\overline{\overline{\{m-m_\sigma\mid m\in\Box_{\rm Nef}(L_0, h)\}}}\subset\sigma^{\vee}. 
$
\end{definition} 

\begin{remark}\label{Srmk}
In $\prod_{j\in I}\{|x_j|< 1\}\times\prod_{j\notin I}\{x_j\in \mathbb{C}\}\times U$, 
\[
\max_{m\in S(L_0, h)_\sigma}\prod_{j\in I}|x_j|^{2\langle m, v_j\rangle}
=\max_{m\in\Box_{\rm Nef}(L_0, h)}\prod_{j\in I}|x_j|^{2\langle m-m_\sigma, v_j\rangle}
\]
for any $I\subset\{1, 2, \dots, n\}$, 
where $m_\sigma$ is an element of the Cartier data $D_h$ which is associated to $\sigma$. 
\qed
\end{remark}

\begin{definition}\label{Pdef}
For a point $((x_0)_1, (x_0)_2, \dots, (x_0)_n, z_0)\in\mathbb{T}_N(\sigma, \mathcal{L})|_U$, 
let us denote by $I$ the set $\{j\in\{1, 2, \dots, n\}\mid x_0^j=0\}$. 
We define the set $P(f_1, f_2, \dots, f_l)_{((x_0)_1, (x_0)_2, \dots, (x_0)_n, z_0)}$ for 
$f_1, f_2, \dots, f_l\in\mathcal{O}_{((x_0)_1, (x_0)_2, \dots, (x_0)_n, z_0)}$ as follows. 
Let 
\[
f_\nu(x_1, x_2, \dots, x_n)=\sum_{\alpha \geq 0}(x_I)^\alpha A_{\nu, \alpha} (x_{I^c}, z), 
\]
be the Taylor expansion of each $f_\nu\,(\nu=1, 2, \dots, l)$ around the point $((x_0)_1, (x_0)_2, \dots, (x_0)_n, z_0)$ for variables $\{x_j\}_{j\in I}$,
where $\alpha=(a_j)_{j\in I}$ is a multi-index, the signature ``$(x_I)^\alpha$" stands for $\prod_{j\in I}(x_j)^{a_j}$, 
and $A_{\nu, \alpha}$ is the germ of a  holomorphic function with $(n-\#I+d)$-variables $(x_{I^c}, z)=((x_j)_{j\not\in I}, z)$. 
We define $P(f_1, f_2, \dots, f_l)_{((x_0)_1, (x_0)_2, \dots, (x_0)_n, z_0)}$ by 
\[
P(f_1, f_2, \dots, f_l)_{((x_0)_1, (x_0)_2, \dots, (x_0)_n, z_0)}=\overline{\overline{\left. \bigcup_{\nu=1}^l\left\{\sum_{j\in I}a_j\cdot v^j\right| A_{\nu, (a_j)_j}\not\equiv 0\right\}}}
\subset\sigma^\vee. 
\]
\end{definition} 

\begin{remark}\label{Pinv}
Here, we use notations in Definition \ref{Pdef}. 
Set 
\[
 P_\sigma=P(f_1, f_2, \dots, f_l)_{(0, 0, \dots, 0, z_0)}
\]
for $(0, 0, \dots, 0, z_0)\in\mathbb{T}_N(\sigma, \mathcal{L})|_U$. 
Let $\tilde{\Sigma}$ be a smooth complete fan which is a subdivision of $\Sigma$, 
$\tilde{\sigma}\in\tilde{\Sigma}_{\rm max}$ be a cone such that $\tilde{\sigma}\subset\sigma$, 
$\tilde{v}_1, \tilde{v}_2, \dots, \tilde{v}_n$ be $N$-minimal generators of $\tilde{\sigma}$, 
and $(\tilde{x}_1, \tilde{x}_2, \dots, \tilde{x}_n, z)$ be the canonical coordinates system of $\mathbb{T}_N(\tilde{\sigma}, \mathcal{L})|_U$ associated to $\tilde{v}_1, \tilde{v}_2, \dots, \tilde{v}_n$. 
For the point $(0, 0, \dots, 0, z_0)$, let us set 
\[
 P_{\tilde{\sigma}}=P(\mu^*f_1, \mu^*f_2, \dots, \mu^*f_l)_{(0, 0, \dots, 0, z_0)}, 
\]
and assume that $f_\nu$ is expanded as
\[
f_\nu(\tilde{x}_1, \tilde{x}_2, \dots, \tilde{x}_n, z)=\sum_{(a_j)_j \geq 0}\prod_{j=1}^n(x_j)^{a_j} A_{\nu, (a_j)_j} (z)
\]
around $(0, 0, \dots, 0, z_0)$. 
Then, by Lemma \ref{mu}, $\mu^*f_\nu$ can be written as 
\[
\mu^*f_\nu(\tilde{x}_1, \tilde{x}_2, \dots, \tilde{x}_n, z)=\sum_{(a_j)_j \geq 0}\prod_{k=1}^n(\tilde{x}_k)^{\sum_{j=1}^na_j\langle v^j, \tilde{v}_k\rangle} A_{\nu, (a_j)_j} (z)
\]
around $(0, 0, \dots, 0, z_0)$. 
Thus, it follows that the following two sets are same; 
\[
\bigcup_{\nu=1}^l\left\{\left. \sum_{j=1}^na_j\cdot v^j\right| A_{\nu, (a_j)_j}\not\equiv 0\right\}
=\bigcup_{\nu=1}^l\left\{\left. \sum_{j, k=1}^na_j\langle v^j, \tilde{v}_k\rangle\cdot \tilde{v}^k\right| A_{\nu, (a_j)_j}\not\equiv 0\right\}. 
\]
However, since the two signature $\overline{\overline{\cdot}}$ appeared in the definition of $P_\sigma$ and $P_{\tilde{\sigma}}$ are different from each other,  
we can not say nothing more than $P_\sigma\subset P_{\tilde{\sigma}}$ in general. 
\end{remark}

\begin{remark}\label{Prmk}
Here, we use notations in Definition \ref{Pdef}. 
We remark that \linebreak $P(f_1, f_2, \dots, f_l)_{((x_0)_1, (x_0)_2, \dots, (x_0)_n, z_0)}$ is finitely generated in the following sense; 
There exists a finite subset 
\[
\{m_1, m_2, \dots, m_l\}\subset P(f_1, f_2, \dots, f_l)_{((x_0)_1, (x_0)_2, \dots, (x_0)_n, z_0)}\cap\bigoplus_{j=1}^n\mathbb{Z}_{\geq 0}v^j
\]
of the lattice such that the equation
\[
P(f_1, f_2, \dots, f_l)_{((x_0)_1, (x_0)_2, \dots, (x_0)_n, z_0)}=\overline{\overline{\{m_1, m_2, \dots, m_l\}}}
\]
holds. 
More generally, for any subset 
$
A\subset\bigoplus_{j=1}^n\mathbb{Z}_{\geq 0} v^j, 
$
there exists a finite subset
\[
\{m_1, m_2, \dots, m_l\}\subset \overline{\overline{A}}\cap\bigoplus_{j=1}^n\mathbb{Z}_{\geq 0} v^j
\]
of lattice points such that the equation
$
\overline{\overline{A}}=\overline{\overline{\{m_1, m_2, \dots, m_l\}}}
$
holds. 
\end{remark}

\begin{lemma}\label{lem2}
For each finite set $A\subset\bigoplus_{j=1}^n\mathbb{Q}_{\geq 0} v^j$ of rational points, there exists a smooth complete cone $\tilde{\Sigma}$ 
which satisfies the following two conditions (i) and (ii). 
(i) $\tilde{\Sigma}$ is a subdivision of $\Sigma$. 
(ii) For all $n$-dimensional cone $\tilde{\sigma}\in\tilde{\Sigma}$ satisfying $\tilde{\sigma}\subset\sigma$, 
there exists an element $m_0\in A$ such that 
$
\min_{m\in\overline{\overline{A}}}\langle m, w\rangle
=\langle m_0, w\rangle 
$
holds for all $w\in\tilde{\sigma}$, 
where $\tilde{v}_1, \tilde{v}_2, \dots, \tilde{v}_n$ is $N$-minimal generators of $\tilde{\sigma}$. 
\end{lemma}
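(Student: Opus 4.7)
The plan is to construct $\tilde{\Sigma}$ in two stages: first produce a rational polyhedral refinement of $\Sigma$ on which the piecewise-linear function $w \mapsto \min_{m \in \overline{\overline{A}}}\langle m,w\rangle$, restricted to $\sigma$, is linear on every maximal cone lying inside $\sigma$; then refine further to a smooth projective fan.

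First I would reduce the minimization to $A$ itself. For each $w\in\sigma$ one has
\[
\min_{m\in\overline{\overline{A}}}\langle m,w\rangle \;=\; \min_{m\in A}\langle m,w\rangle \;=:\; \varphi(w),
\]
since the inclusion $A\subset\overline{\overline{A}}$ gives ``$\leq$'', while ``$\geq$'' follows from the very definition of $\overline{\overline{A}}$. Hence it suffices to find $\tilde{\Sigma}$ such that on every maximal cone $\tilde{\sigma}\subset\sigma$ of $\tilde{\Sigma}$ the concave piecewise-linear function $\varphi$ coincides with a single $\langle m_0,\cdot\rangle$ for some $m_0\in A$.

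Second, for each $m\in A$ I would introduce
\[
C_m \;:=\; \{w\in\sigma \,\mid\, \langle m,w\rangle \leq \langle m',w\rangle \text{ for every } m'\in A\},
\]
the locus in $\sigma$ on which $\varphi$ is realized by $m$. Because $A$ is a finite set of rational points, $C_m$ is the intersection of $\sigma$ with the finitely many rational half-spaces $\{\langle m-m',\cdot\rangle \leq 0\}$, hence a rational polyhedral cone. Two such cones $C_m$, $C_{m'}$ meet along the common face cut out by the equalities $\langle m-m',\cdot\rangle = 0$, and their union is $\sigma$ since a minimum is attained at each $w\in\sigma$. So $\{C_m\}_{m\in A}$ together with all their faces defines a rational polyhedral subdivision of $\sigma$. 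I would then extend this subdivision across $\Sigma$ --- keeping the other maximal cones and propagating the induced subdivisions across shared faces --- to obtain a rational polyhedral refinement $\Sigma'$ of $\Sigma$ whose restriction to $\sigma$ coincides with $\{C_m\}$.

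Finally, I invoke the standard fact from toric geometry that any rational polyhedral fan admits a smooth projective refinement. Applied to $\Sigma'$, this yields the desired $\tilde{\Sigma}$. Since the last step only subdivides cones of $\Sigma'$ further, every maximal cone $\tilde{\sigma}\in\tilde{\Sigma}$ with $\tilde{\sigma}\subset\sigma$ is contained in some $C_{m_0}$, on which $\varphi$ is given by $\langle m_0,\cdot\rangle$; this is exactly condition (ii).

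The main obstacle I expect is the bookkeeping in the middle step: extending the subdivision $\{C_m\}$ of $\sigma$ to a rational polyhedral refinement of the whole fan $\Sigma$ requires propagating the induced subdivisions across every face shared with neighbouring maximal cones while preserving rationality. This is standard but tedious; once it is handled, the smooth projective refinement theorem closes the argument.
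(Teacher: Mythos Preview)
Your proposal is correct and follows essentially the same approach as the paper: subdivide $\sigma$ by the hyperplanes $\{\langle m_j-m_k,\cdot\rangle=0\}$ for $m_j,m_k\in A$, then pass to a smooth refinement. The paper's proof sidesteps the bookkeeping obstacle you flag by cutting \emph{every} cone of $\Sigma$ by these hyperplanes simultaneously, so compatibility across shared faces is automatic and no separate extension step is needed.
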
 
\begin{proof}
Let $\tilde{\Sigma}$ be a fan which is made by cutting all cones of $\Sigma$ by the all hyperplanes
\[
\{w\in N_\mathbb{R}\mid \langle m_j, w\rangle=\langle m_k, w\rangle\}\ (m_j, m_k\in A)
\]
of $N_\mathbb{R}$. 
Since $A\subset\bigoplus_{j=1}^n\mathbb{Q}_{\geq 0}v^j$, each cone of $\tilde{\Sigma}$ is rational. 
Moreover, for all $n$-dimensional cone of $\tilde{\Sigma}$ satisfying $\tilde{\sigma}\subset\sigma$, 
there exists an element $m_{\tilde{\sigma}}\in A$ such that
$\min_{m\in\overline{\overline{A}}}\langle m, w\rangle
=\langle m_{\tilde{\sigma}}, w\rangle$ holds for all $w\in\tilde{\sigma}$. 
Let $\tilde{\Sigma}'$ be a smooth fan which is a subdivision of $\tilde{\Sigma}$. 
This fan $\tilde{\Sigma}'$ is what we desired.  
\end{proof} 

\section{Construction of minimal singular metrics}
Here, we use notations in the previous section. 
In this section, we construct a minimal singular metric on the big line bundle $L=\pi^*L_0\otimes\mathcal{O}_X(D_h)$ over the total space of a toric bundle $X=\mathbb{T}_N(\Sigma, \mathcal{L})$
over a complex torus $V$, where $\Sigma$ is a smooth projective fan in a $n$-dimensional fan $N$. 
According to Proposition \ref{psdeffrel}, it is clear that the set $\Box_{\rm Nef}(L_0, h)=\Box_{\rm Nef}(L_0, h)$ is not empty in this setting. \par
First of all, we define the singular hermitian metric $e^{-\psi_{\sigma, m}}$ for each $m\in\Box_{\rm Nef}(L_0, h)$. 

\begin{definition}\label{psisigmamdef}
Let $m$ be an element of $\Box_{\rm Nef}(L_0, h)$, 
$\sigma$ be an element of $\Sigma_{\rm max}$, 
$v_1$, $v_2$, $\dots$, $v_n$ be $N$-minimal generators of $\sigma$, 
and $\{m_\sigma\}_{\sigma}$ be the Cartier data of $D_h$. 
Here, we define the plurisubharmonic function $\psi_{\sigma, m}$ on $\mathbb{T}_N(\sigma, \mathcal{L})|_U$ by 
\[
 \psi_{\sigma, m}(x_1, x_2, \dots, x_n, z)= \log\left(\prod_{j=1}^n|x_j|^{2\langle m-m_\sigma, v_j\rangle}\right)+\varphi_{L_0\otimes\mathcal{L}^m}(z), 
\]
where $U$ is a sufficiently small open set in $V$ and $\varphi_{L_0\otimes\mathcal{L}^m}=\varphi_{L_0}+\sum_{j=1}^n\langle m, v_j\rangle\varphi_{\mathcal{L}^{v_j}}$.  
For the definition of $\varphi_{L_0}$ and $\varphi_{\mathcal{L}^{v_j}}$, see Proposition \ref{bl}.  
And here, we formally regard $0^0$ as $1$. 
\end{definition}

\begin{remark}
In Definition \ref{psisigmamdef}, the first term of the defining equation of $\psi_{\sigma, m}$ is clearly plurisubharmonic. 
According to Proposition \ref{bl}, the second term is also turned out to be plurisubharmonic. 
Thus $\psi_{\sigma, m}$ is also a plurisubharmonic function, indeed. 
\end{remark}

\begin{remark}\label{psisigmam}
The functions $\{e^{-\psi_{\sigma, m}}\}_{\sigma\in\Sigma_{\rm max}}$ glue together to give a singular hermitian metric on $L$. 
Here, we explain this fact when $m$ is a rational point of $M_{\mathbb{R}}$ for simplicity. \par
Let $\nu$ be a natural number such that $\nu m\in M$. By Observation \ref{Xmf}, $\nu\psi_{\sigma, m}$ can be rewritten as
\[
 \nu\psi_{\sigma, m}=\log |\chi^{\nu m}|^2+\nu\varphi_{L_0\otimes\mathcal{L}^m}. 
\]
Since $\chi^{\nu m}$ can be regarded as a meromorphic section of the line bundle 
$\mathcal{O}_X(D_{\nu h})\otimes\pi^*\mathcal{L}^{-\nu m}$, 
the first term of the right hand side of the above equation is turned out to be a local weight of a singular hermitian metric which is defined globally on $\mathcal{O}_X(D_{\nu h})\otimes\pi^*\mathcal{L}^{-\nu m}$. 
Since the second term is also a local weight of the hermitian metric globally defined on $\pi^*(L_0^{\nu}\otimes\mathcal{L}^{\nu m})$, 
the sum $\nu\psi_{\sigma, m}$ is a local weight of a singular hermitian metric globally defined on $\nu L=\pi^*L_0^{\nu}\otimes\mathcal{O}_X(D_{\nu h})$. \par
This explanation also makes sense in the general case, by considering formally with $\mathbb{R}$-line bundles. 
\end{remark}

\begin{definition} 
We define the plurisubharmonic function $\psi_\sigma$ on $\mathbb{T}_N(\sigma, \mathcal{L})|_U$ by 
\[
\psi_\sigma(x_1, x_2, \dots, x_n, z)=\max_{m\in\Box_{\rm Nef}(L_0, h)}\psi_{\sigma, m}(x_1, x_2, \dots, x_n, z)
\]
for a sufficiently small open set $U$ of $V$ and $\sigma\in\Sigma_{\rm max}$. 
\end{definition}

\begin{remark}
Since each $\psi_{\sigma, m}$ is plurisubharmonic, it is clear that the upper envelope 
\[
(x_1, x_2, \dots, x_n, z)\longmapsto\limsup_{(\xi^1, \xi^2, \dots, \xi^n, \zeta)\to(x_1, x_2, \dots, x_n, z)}\psi_\sigma(\xi^1, \xi^2, \dots, \xi^n, \zeta)
\]
of $\psi_\sigma$ is a plurisubharmonic function. 
Now let us consider the function 
\[
((x_1, x_2, \dots, x_n, z), m)\longmapsto e^{\psi_{\sigma, m}(x_1, x_2, \dots, x_n, z)}= \left(\prod_{j=1}^n|x_j|^{2\langle m-m_{\sigma}, v_j\rangle}\right)\cdot e^{\varphi_{L_0\otimes\mathcal{L}^m}(z)}. 
\]
This function is a continuous function defined on $\mathbb{T}_N(\sigma, \mathcal{L})|_U\times \Box_{\rm Nef}(L_0, h)$. 
Since $\Box_{\rm Nef}(L_0, h)$ is compact (Lemma \ref{BOX}), the function 
\[
((x_1, x_2, \dots, x_n, z), m)\longmapsto e^{\psi_\sigma(x_1, x_2, \dots, x_n, z)}=\max_{m\in\Box_{\rm Nef}(L_0, h)} e^{\psi_{\sigma, m}(x_1, x_2, \dots, x_n, z)}, 
\]
is also continuous. 
Therefore, $\psi_\sigma$ itself is also a plurisubharmonic function. 
\end{remark}

\begin{remark}
Remark \ref{psisigmam} yields that $\{e^{-\psi_\sigma}\}_{\sigma\in \Sigma_{\rm max}}$ glue together to give a singular hermitian metric on $L$ whose curvature current is semi-positive.   
\end{remark}

\begin{theorem}\label{main_theorem}
Assume that $L$ is a big line bundle, then the singular hermitian metric $e^{-\psi_\sigma}$ of $L$ is a minimal singular metric. 
\end{theorem}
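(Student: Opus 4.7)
Since the preceding remarks show that $\{e^{-\psi_\sigma}\}$ already defines a globally semi-positively curved singular Hermitian metric on $L$, the inequality $\psi_\sigma\le \varphi_{\min}+O(1)$ is automatic from Definition \ref{minimal_singular_metric}. My plan is therefore to prove the reverse direction: for any singular Hermitian metric $h$ on $L$ with $\Theta_h\ge 0$ and local weight $\psi$, one has $\psi\le \psi_\sigma+O(1)$ locally on each chart $\mathbb{T}_N(\sigma,\mathcal{L})|_U$ for $\sigma\in\Sigma_{\rm max}$.

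The first step is to reduce the claim to a statement about sections, exploiting the big-ness of $L$. By a standard result going back to Demailly (valid in the K\"ahler setting via Ohsawa--Takegoshi-type $L^2$-extension), the minimal singularity class $[\varphi_{\min}]$ coincides with that of the algebraic Bergman envelope
\[
\varphi_{\rm alg}(x):=\sup_{k\ge 1}\ \sup_{s}\ \tfrac{1}{k}\log|s(x)|^2,
\]
where the inner supremum is taken over $s\in H^0(X,kL)$ of $L^2$-norm at most $1$ with respect to a fixed smooth reference metric on $L$. It thus suffices to prove $\varphi_{\rm alg}\le\psi_\sigma+O(1)$ locally.

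The second step then exploits the explicit description of $H^0(X,kL)$ from Section 3. By Proposition \ref{psdeffrel}, every section of $kL$ is a finite linear combination of monomial-type sections $\chi^m\cdot\pi^*f$ with $m\in k\Box_{\rm Nef}(L_0,h)\cap M$ and $f\in H^0(V,L_0^k\otimes\mathcal{L}^m)$, and by Observation \ref{fobs} each such monomial reads $\prod_{j=1}^n x_j^{\langle m-km_\sigma,v_j\rangle}\eta(z)$ in the canonical coordinates $(x_1,\dots,x_n,z)$ on $\mathbb{T}_N(\sigma,\mathcal{L})|_U$. Direct computation, compared with Definition \ref{psisigmamdef}, yields
\[
\tfrac{1}{k}\log|\chi^m\cdot\pi^*f|^2=\psi_{\sigma,m/k}(x,z)+\tfrac{1}{k}\bigl(\log|\eta(z)|^2-\varphi_{L_0^k\otimes\mathcal{L}^m}(z)\bigr).
\]
Since $m/k\in\Box_{\rm Nef}(L_0,h)$ forces $L_0\otimes\mathcal{L}^{m/k}$ to be nef on $V$, the harmonic-form metric of Proposition \ref{bl}(4) is smooth and is itself a minimal singular metric on this nef line bundle; hence the residual in parentheses---the $h_{L_0\otimes\mathcal{L}^{m/k}}^{\otimes k}$-local-weight of $|f|^2$---is bounded above once $s$ is $L^2$-normalized. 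Combined with $\psi_{\sigma,m/k}\le\psi_\sigma$ and summation over monomial components, taking suprema delivers $\varphi_{\rm alg}\le\psi_\sigma+O(1)$.

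The hardest part will be making the $L^2$-comparison uniform as both $k$ and $m/k$ vary. This requires a uniform Bergman-kernel-type estimate on the complex torus $V$ as $m/k$ ranges over a dense subset of the compact set $\Box_{\rm Nef}(L_0,h)$ (Lemma \ref{BOX}); the explicit quadratic-in-$z$, linear-in-$m$ formula for $\varphi_{L_0\otimes\mathcal{L}^m}$ in Proposition \ref{bl}(4), together with continuous parameter dependence, should make this tractable, but irrational limit points of $\Box_{\rm Nef}(L_0,h)$, the non-uniform distribution of lattice points $k\Box_{\rm Nef}(L_0,h)\cap M$, and the need to control sums of monomial sections (rather than a single one) are the likely technical hurdles.
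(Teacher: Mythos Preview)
Your strategy and the paper's diverge after the shared first step of invoking Demailly's approximation to replace $\varphi_{\min}$ by $\varphi_\nu=\tfrac{1}{\nu}\log\sum_j|f_j^{(\nu)}|^2$ with $f_j^{(\nu)}\in H^0(X,\nu L)$. The paper does \emph{not} decompose sections into monomials and then chase a base-torus estimate. Its key step (Lemma~\ref{lem3}) is a maximum-principle argument purely on the toric side: after a fan subdivision $\tilde\Sigma$ chosen so that on each $\tilde\sigma\subset\sigma$ the Newton set $P(\varphi_\nu)_\sigma$ is ``seen from a single vertex $m_0$'' (Lemma~\ref{lem2}), the function $\mu^*\varphi_\nu-\log\prod_j|\tilde x_j|^{2\langle m_0,\tilde v_j\rangle}$ is plurisubharmonic on the polydisc $K_{\tilde\sigma}$ (each $\mu^*f_j^{(\nu)}$ being divisible by $\prod_j\tilde x_j^{\langle\nu m_0,\tilde v_j\rangle}$), hence attains its maximum where all $|\tilde x_j|=1$, where it simply equals $\varphi_\nu$. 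This yields $\varphi_\nu\le\max_{m\in\Box_{\rm Nef}}\log\prod_j|x_j|^{2\langle m-m_\sigma,v_j\rangle}+M_{\varphi_\nu}$ with $M_{\varphi_\nu}=\sup_{K_\sigma}\varphi_\nu$; letting $\nu\to\infty$ and invoking Lemma~\ref{lem1} finishes. The $z$-variable rides along passively; no estimate on $V$ is ever needed.

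Your route, by contrast, has a real gap at exactly the point you label ``hardest''. The sentence ``the harmonic-form metric is itself a minimal singular metric on this nef line bundle; hence the residual is bounded above once $s$ is $L^2$-normalized'' is a non-sequitur: minimality of a metric says nothing about $\sup$-norms of sections. What you actually need is (i) to pass from $\|s\|_{L^2(X,h_\infty^k)}\le 1$ to a bound on each monomial piece $\|s_m\|_{L^2(X)}$ (this requires a torus-invariant choice of $h_\infty$ to get orthogonality), then (ii) to convert $\|s_m\|_{L^2(X)}$ into an $L^2(V)$-bound on $f_m$, which drags in the fibre integral $\int_{\mathrm{fibre}}|\chi^m|_{h_\infty^k}^2$, and finally (iii) a Bergman-kernel bound $\sup_V|f_m|^2_{h_{L_0^k\otimes\mathcal L^m}}\le C(k,m)\|f_m\|_{L^2(V)}^2$ with $\tfrac{1}{k}\log C(k,m)$ uniformly bounded over all $m\in k\Box_{\rm Nef}\cap M$. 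Steps (ii) and (iii) both degenerate as $m/k$ approaches $\partial\Box_h$ (the monomial concentrates on a toric stratum) or $\partial\Box_{\rm Nef}$ (the curvature of $L_0^k\otimes\mathcal L^m$ drops rank), and the two degenerations need not compensate. This is not obviously insurmountable, but it is genuinely non-trivial and is precisely what the paper's subdivision-plus-maximum-principle argument is designed to avoid.
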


From now on, we will prepare for the proof of Theorem \ref{main_theorem}. 
Let $\sigma\in\Sigma$ be a $n$-dimensional cone, 
$v_1, v_2, \dots, v_n$ be $N$-minimal generators of $\sigma$, 
and $(x_1, x_2, \dots, x_n, z)$ be the canonical coordinates system of $\mathbb{T}_N(\sigma, \mathcal{L})|_U$ associated to $v_1, v_2, \dots, v_n$, 
where $U$ is a sufficiently small open set in $V$. 
We use these notations throughout this section.  

\begin{lemma}\label{lem1}
Let us fix a point $((x_0)_1, (x_0)_2, \dots, (x_0)_n, z_0)\in\mathbb{T}_N(\sigma, \mathcal{L})|_U$, 
and denote by $I$ the set $\{j\in\{1, 2, \dots, n\}\mid x_0^j=0\}$. 
Then, there exist constants $C_1$ and $C_2$ such that 
\[
\max_{m\in\Box_{\rm Nef}(L_0, h)}\log\prod_{j\in I}|x_j|^{2\langle m-m_\sigma, v_j\rangle}+C_1\leq \psi_\sigma
\leq\max_{m\in\Box_{\rm Nef}(L_0, h)}\log\prod_{j\in I}|x_j|^{2\langle m-m_\sigma, v_j\rangle}+C_2
\]
holds on $\prod_{j\in I}^n\{|x_j|\leq 1\}\times\prod_{j\notin I}\{|x_j-x_0^j|\leq \delta _j\}\times \overline{U}$, 
where $\{\delta_j\}_{j\not\in I}$ is a system of sufficiently small positive numbers such that $0\not\in\{|x_j-x_0^j|\leq \delta _j\}$ for all $j\not\in I$, 
and $m_\sigma$ is the element of the Cartier data of $D_h$ which is associated to $\sigma$. 
\end{lemma}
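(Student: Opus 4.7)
The plan is to split the plurisubharmonic functions $\psi_{\sigma,m}$ into a singular part in the variables $\{x_j\}_{j \in I}$ and a bounded remainder, and then pass to the maximum over $m$. Concretely, for each $m \in \Box_{\mathrm{Nef}}(L_0,h)$ I would write
\[
\psi_{\sigma,m}(x_1,\ldots,x_n,z) = \log\prod_{j \in I}|x_j|^{2\langle m-m_\sigma, v_j\rangle} + R_m(x,z),
\]
where
\[
R_m(x,z) = \log\prod_{j \notin I}|x_j|^{2\langle m-m_\sigma, v_j\rangle} + \varphi_{L_0 \otimes \mathcal{L}^m}(z).
\]

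The key step is to show that $R_m$ is uniformly bounded, both above and below, independently of $m \in \Box_{\mathrm{Nef}}(L_0,h)$ and of $(x,z)$ in the stated polydisk region. For this, I would appeal to the compactness of $\Box_{\mathrm{Nef}}(L_0,h)$ guaranteed by Lemma \ref{BOX}: the continuous function $m \mapsto \langle m - m_\sigma, v_j\rangle$ is bounded on $\Box_{\mathrm{Nef}}(L_0,h)$ for every $j$, while the requirement $0 \notin \{|x_j-(x_0)_j| \le \delta_j\}$ for $j \notin I$ forces each such $|x_j|$ to lie in a compact subset of $(0,\infty)$. Combining these observations gives uniform bounds for the first summand of $R_m$. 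For the second summand, I would use that $\varphi_{L_0 \otimes \mathcal{L}^m}(z)$, by its explicit expression in Definition \ref{psisigmamdef}, depends continuously on $(m,z) \in \Box_{\mathrm{Nef}}(L_0,h) \times \overline{U}$, hence is bounded on this compact product.

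With constants $C_1 \le R_m(x,z) \le C_2$ established uniformly in $m$, adding these bounds to the first term of $\psi_{\sigma,m}$ and then taking the maximum over $m \in \Box_{\mathrm{Nef}}(L_0,h)$ of all three resulting expressions yields the desired double inequality for $\psi_\sigma = \max_m \psi_{\sigma,m}$. The only real subtlety is that the bounds on $R_m$ must be uniform in $m$ before one applies the maximum, because otherwise one would pick up $m$-dependent constants incompatible with the statement; this is precisely where Lemma \ref{BOX} enters. Apart from that point, the argument is a straightforward compactness-and-continuity exercise, so I do not anticipate a genuine obstacle.
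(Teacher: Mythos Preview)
Your proposal is correct and follows essentially the same route as the paper: split $\psi_{\sigma,m}$ into the $\{x_j\}_{j\in I}$ part and the remainder $R_m$, observe that $R_m$ is a continuous function of $(m,(x_j)_{j\notin I},z)$ on the compact set $\Box_{\mathrm{Nef}}(L_0,h)\times\prod_{j\notin I}\{|x_j-(x_0)_j|\le\delta_j\}\times\overline{U}$ (Lemma~\ref{BOX}), take its minimum $C_1$ and maximum $C_2$, and then pass to the maximum over $m$. The only difference is cosmetic: you bound the two summands of $R_m$ separately, whereas the paper bounds $R_m$ as a single continuous function on a compact set.
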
 

\begin{proof}
The function 
\[
\left(m, (x_j)_{j\not\in I}, z\right)\longmapsto
\log\prod_{j\not\in I}|x_j|^{2\langle m-m_\sigma, v_j\rangle}+\varphi_{L_0\otimes\mathcal{L}^m}(z)
\]
defined on $\Box_{\rm Nef}(L_0, h)\times\prod_{j\notin I}\{|x_j-x_0^j|\leq \delta _j\}\times \overline{U}$ is continuous. 
According to Lemma \ref{BOX}, $\Box_{\rm Nef}(L_0, h)\times\prod_{j\notin I}\{|x_j-x_0^j|\leq \delta _j\}\times \overline{U}$ is compact, which yields that this function has both the maximum value and the minimum value, which we denote by $C_1$ and $C_2$ respectively. 
Therefore, the inequality 
\[
\log\prod_{j\in I}|x_j|^{2\langle m-m_\sigma, v_j\rangle}+C_1\leq
\psi_{\sigma, m}\leq
\log\prod_{j\in I}|x_j|^{2\langle m-m_\sigma, v_j\rangle}+C_2
\]
follows, which proves the lemma. 
\end{proof}

As we have assumed that $L$ is big thus in particular pseudo-effective, there must be a minimal singular metric on $L$. 
We fix one of these and denote it by $h_{\rm min}$. 

\begin{lemma}\label{mainlem}
Let $\sigma$ be an element of $\Sigma_{\rm max}$, 
and we denote the weight function of $h_{\rm min}$ around $\mathbb{T}_N(\sigma, \mathcal{L})|_{\overline{U}}$ 
with respect to the local trivialization of $L$ as in Observation \ref{Xmf} by $\varphi_{\min, \sigma}$. 
Then, there exists a constant $C_\sigma$ such that 
\[
 \varphi_{\rm min, \sigma}\leq \psi_\sigma+C_\sigma
\]
holds on the set $K_\sigma=\{(x_1, x_2, \dots, x_n, z)\in\mathbb{T}_N(\sigma, \mathcal{L})|_{\overline{U}}\mid \forall j\in \{1, 2, \dots, n\}, |x_j|\leq 1\}$. 
\end{lemma}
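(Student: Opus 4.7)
The plan is to dominate $\varphi_{\min,\sigma}$ on $K_\sigma$ by weights of global holomorphic sections of $kL$ via Demailly's algebraic approximation of the minimal singular metric for a big line bundle, and then to verify that every such section-weight is pointwise at most $\psi_\sigma+O(\log k/k)$ on $K_\sigma$. Letting $k\to\infty$ will then give the claim.

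First, since $L$ is big, Demailly's approximation theorem provides algebraic weights $\varphi_k^{\mathrm{alg}}:=\tfrac{1}{2k}\log\sum_i|s_{i,k}|^2$, constructed from an $L^2$-orthonormal basis $\{s_{i,k}\}$ of $H^0(X,kL)$ (or of $H^0(X,kL\otimes\mathcal{J}(k\varphi_{\min}))$ injected into $H^0(X,kL)$), satisfying $\varphi_{\min}\le \varphi_k^{\mathrm{alg}}+\epsilon_k$ with $\epsilon_k\to0$. Hence it suffices to prove, for every suitably normalized $s\in H^0(X,kL)$, the pointwise bound $\tfrac{1}{k}\log|s|^2\le \psi_\sigma+O(\log k/k)$ on $K_\sigma$ (up to a bounded constant coming from the choice of local trivialization of $L$).

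By Proposition \ref{psdeffrel}, such a section decomposes as $s=\sum_{m\in k\Box_{\rm Nef}(L_0,h)\cap M}\chi^m\cdot\pi^*f_m$ with $f_m\in H^0(V,kL_0\otimes\mathcal{L}^m)$. Observation \ref{fobs} expresses each summand in the $\sigma$-canonical coordinates on $\mathbb{T}_N(\sigma,\mathcal{L})|_U$ as $\prod_{j=1}^n x_j^{\langle m-km_\sigma,v_j\rangle}\,\eta_m(z)$. Since $kL_0\otimes\mathcal{L}^m$ is nef, Proposition \ref{bl} furnishes the smooth semipositively-curved metric with local weight $k\varphi_{L_0\otimes\mathcal{L}^{m/k}}$, against which $|\eta_m|^2\le C\,e^{k\varphi_{L_0\otimes\mathcal{L}^{m/k}}}$ holds uniformly on $\overline{U}$. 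Substituting gives
\[
\frac{1}{k}\log|\chi^m\cdot\pi^*f_m|^2\le \psi_{\sigma,\,m/k}+\frac{C}{k}\le \psi_\sigma+\frac{C}{k}\quad\text{on }K_\sigma,
\]
and Cauchy--Schwarz over the $O(k^n)$ monomial summands absorbs the multiplicity into an $O(\log k/k)$ term, completing the reduction.

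The main obstacle is the uniformity of the constant $C$ in $|\eta_m|^2\le C\,e^{k\varphi_{L_0\otimes\mathcal{L}^{m/k}}}$ over all $m\in k\Box_{\rm Nef}\cap M$ and all $k$: one must arrange a single normalization of the family of smooth metrics on $L_0\otimes\mathcal{L}^t$, $t\in\Box_{\rm Nef}$, making $C$ independent of both $m$ and $k$. Boundedness of $\Box_{\rm Nef}$ (Lemma \ref{BOX}), continuous dependence of the harmonic weights $\varphi_{L_0\otimes\mathcal{L}^t}$ on $t$ (Proposition \ref{bl}), and compactness of $\overline{U}$ and $V$ combine to supply such a uniform $C$. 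A lesser subtlety is the precise form of the error in Demailly's approximation, but any vanishing $\epsilon_k\to 0$ is enough because the target inequality is independent of $k$.
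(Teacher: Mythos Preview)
Your strategy is sound, but the justification of the key uniform bound has a genuine gap. The inequality $|\eta_m|^2\le C\,e^{k\varphi_{L_0\otimes\mathcal{L}^{m/k}}}$ asserts that the pointwise norm of $f_m$ against the harmonic metric on $kL_0\otimes\mathcal{L}^m$ is bounded by a constant independent of $m$ and $k$. Your stated justification---boundedness of $\Box_{\rm Nef}$, continuous dependence of $\varphi_{L_0\otimes\mathcal{L}^t}$ on $t$, compactness of $\overline{U}$ and $V$---only controls the \emph{metric weights}; it says nothing about the size of the \emph{section} $f_m$, which depends entirely on its normalization. The only normalization in sight is the $L^2$-condition on the ambient $s\in H^0(X,kL)$, and you have not explained how this constrains the individual Fourier pieces. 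In fact the fiber integrals $\int|\chi^m|^2 e^{-k\varphi_\infty}$ can be polynomially small in $k$, so an orthonormal $s$ supported on a single character $m$ can have $|\eta_m|$ polynomially large; the $k$-independent $C$ you claim is therefore false as written. The gap is repairable: torus averaging over $T_{\mathbb{R}}=(S^1)^n$ gives $\sup_X|\chi^m\pi^*f_m|_{h_\infty^k}\le\sup_X|s|_{h_\infty^k}$ for any $T_{\mathbb{R}}$-invariant smooth $h_\infty$, and the right side is $O(k^{\dim X})$ by the mean-value inequality; evaluating at a point with all $|x_j|=1$ and then comparing $\varphi_\infty$ with $\varphi_{L_0\otimes\mathcal{L}^{m/k}}$ (here your compactness argument does apply) yields $C=C(k)$ polynomial in $k$, which suffices after taking $\tfrac{1}{k}\log$.

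The paper sidesteps this normalization issue entirely by a different route. Rather than decomposing sections into Fourier pieces, it proves directly (Lemma~\ref{lem3}) that $\varphi_\nu\le\max_{m\in\Box_{\rm Nef}}\log\prod_j|x_j|^{2\langle m-m_\sigma,v_j\rangle}+M_{\varphi_\nu}$ on $K_\sigma$, taking as additive constant the maximum $M_{\varphi_\nu}$ of $\varphi_\nu$ itself on $K_\sigma$, which is automatically controlled since $M_{\varphi_\nu}\to\max_{K_\sigma}\varphi_{\min,\sigma}$. The argument passes to a toric subdivision $\tilde{\Sigma}$ adapted to the Newton polytope of $\varphi_\nu$ (Lemma~\ref{lem2}) so that on each $K_{\tilde{\sigma}}$ a single $m_0$ minimizes $\langle m,\cdot\rangle$; dividing out by the corresponding monomial makes $\mu^*\varphi_\nu-\log\prod_k|\tilde{x}_k|^{2\langle m_0,\tilde{v}_k\rangle}$ plurisubharmonic on $K_{\tilde{\sigma}}$, and the maximum principle pushes its supremum onto the distinguished boundary $\{|\tilde{x}_k|=1\}$, where it equals $\varphi_\nu$ at a point of $K_\sigma$. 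This trades your analytic normalization problem for a purely combinatorial argument on the fan side.
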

\begin{proof}
Let us denote by $m_\sigma$ the element of the Cartier data of $D_h$ associated to $\sigma$. 
Applying Lemma \ref{lem1} with $I=\{1, 2, \dots, n\}$, it follows that there exists a constant $C$ such that 
\[
\max_{m\in\Box_{\rm Nef}(L_0, h)}\log\prod_{j=1}^n|x_j|^{2\langle m-m_\sigma, v_j\rangle}\leq \psi_\sigma +C
\]
holds on $K_\sigma$. \par
Thus here, we compare $\varphi_{\rm min, \sigma}$ with $\max_{m\in\Box_{\rm Nef}(L_0, h)}\log\prod_{j=1}^n|x_j|^{2\langle m-m_\sigma, v_j\rangle}$. \par 
We choose an infinite subsequence $\{\nu\}\subset\mathbb{N}$ and a finite subset $\{f_j^{(\nu)}\}_{1\leq j\leq N_\nu}$ of $H^0(X, \nu L)$ for each $\nu$ satisfying the following condition; 
The function  
\[
\varphi _\nu=\frac{1}{\nu}\log\sum_{j=1}^{N_\nu}|f_j^{(\nu)}|^2
\]
converges pointwise to $\varphi_{\rm min, \sigma}$ on $X$ except a subset of measure $0$ as $\nu\to \infty$, 
and the maximum value $M_{\varphi_\nu}$ of $\varphi_\nu$ on $K_\sigma$ also converges 
to $M_{\varphi_{\rm min}, \sigma}=\max_{K_\sigma}\varphi_{\rm min, \sigma}$ as $\nu\to \infty$. 
The existence of these functions can be immediately shown by applying \cite[Theorem (13.21)]{D} regarding $\varphi$ in the theorem 
as $(1-\frac{1}{k})\varphi_{\rm min}+\frac{1}{k}\varphi_+$ for each natural number $k$, where $\varphi_+$ is the local weight of a singular hermitian metric $h_+$ on $L$ 
which satisfies $\Theta_{h_+}\geq\varepsilon\omega$ for some positive number $\varepsilon$ and a K\"ahler metric $\omega$ on $X$. 

Then, according to the next Lemma \ref{lem3}, an inequality
\[
\varphi_\nu\leq \max_{m\in\Box_{\rm Nef}(L_0, h)}\log\prod_{j=1}^n|x_j|^{2\langle m-m_\sigma, v_j\rangle}+M_{\varphi_\nu}
\]
holds on $K_\sigma$. 
Considering this inequality as $\nu\to\infty$, we obtain 
\[
\varphi_{\rm min, \sigma}\leq \max_{m\in\Box_{\rm Nef}(L_0, h)}\log\prod_{j=1}^n|x_j|^{2\langle m-m_\sigma, v_j\rangle}+M_{\varphi_{\rm min}, \sigma}
\]
on $K_\sigma$ except the subset of measure $0$. 
Since the both hand sides are plurisubharmonic, this inequality holds on whole $K_\sigma$. \par
According to the above argument, we obtain the inequality
\[
\varphi_{\rm min, \sigma}\leq \psi_\sigma +C+M_{\varphi_{\rm min}, \sigma}
\]
on $K_\sigma$, which proves the lemma. 
\end{proof}

\begin{lemma}\label{lem3}
Here we use notations in the proof of Lemma \ref{mainlem}. 
The inequality
\[
\varphi_\nu\leq \max_{m\in\Box_{\rm Nef}(L_0, h)}\log\prod_{j=1}^n|x_j|^{2\langle m-m_\sigma, v_j\rangle}+M_{\varphi_\nu}
\]
holds on $K_\sigma$. 
\end{lemma}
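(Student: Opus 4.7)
The plan is to exploit the fiberwise torus action in order to write each $f_j^{(\nu)}$ as a pure monomial in the canonical coordinates, after which the inequality reduces to an elementary monomial estimate combined with max-modulus on the distinguished torus $\{|x_k|=1\}$.

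First I would observe that the compact torus $T_{\rm cpt}=\mathrm{Hom}(M,S^1)$ acts on $X$ along the fibers of $\pi$ and lifts to $L$, because $L=\pi^\ast L_0\otimes\mathcal O_X(D_h)$ and $D_h$ is a sum of the $T$-invariant divisors $\Gamma_v$. The direct sum decomposition in Proposition \ref{psdeffrel} is then the $M$-weight decomposition of $H^0(X,\nu L)$ under $T$. By averaging over $T_{\rm cpt}$, the metrics $\varphi_{\min}$ and $\varphi_+$ used in the construction of $\varphi_\nu$ in Lemma \ref{mainlem} can be arranged to be $T_{\rm cpt}$-invariant, which makes the weighted $L^2$-inner product used in Demailly's Theorem (13.21) invariant as well; hence the orthonormal basis of approximating sections can be chosen to consist of $T$-eigenvectors. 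Concretely, each $f_j^{(\nu)}$ can be taken of the form $\chi^{m_j}\cdot\pi^\ast s_j$ for some $m_j\in\nu\Box_{\rm Nef}(L_0,h)\cap M$ and $s_j\in H^0(V,L_0^\nu\otimes\mathcal L^{m_j})$.

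By Observation \ref{fobs}, each such $f_j^{(\nu)}$ becomes a single monomial in the canonical coordinates on $\mathbb{T}_N(\sigma,\mathcal L)|_U$:
\[
f_j^{(\nu)}(x_1,\dots,x_n,z)=\prod_{k=1}^n x_k^{\langle m_j-\nu m_\sigma,v_k\rangle}\,\eta_j(z),
\]
with $\eta_j\in\mathcal O(U)$ and all exponents nonnegative, since $v_k\in\sigma$ and $m_j/\nu\in\Box_h$. For $(y,z)\in K_\sigma$, using that $m_j/\nu\in\Box_{\rm Nef}(L_0,h)$,
\[
\prod_k|y_k|^{2\langle m_j-\nu m_\sigma,v_k\rangle}\ \le\ \Bigl[\max_{m\in\Box_{\rm Nef}(L_0,h)}\prod_k|y_k|^{2\langle m-m_\sigma,v_k\rangle}\Bigr]^{\nu},
\]
so summing over $j$ yields $\sum_j|f_j^{(\nu)}(y,z)|^2\le[\max_m\prod_k|y_k|^{2\langle m-m_\sigma,v_k\rangle}]^{\nu}\cdot\sum_j|\eta_j(z)|^2$. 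Next I would note that for any point $x^\ast$ with $|x^\ast_k|=1$ for all $k$ the monomial factor equals $1$, so $\sum_j|\eta_j(z)|^2=\sum_j|f_j^{(\nu)}(x^\ast,z)|^2$; since $(x^\ast,z)\in K_\sigma$, this quantity is bounded by $\max_{K_\sigma}\sum_j|f_j^{(\nu)}|^2=e^{\nu M_{\varphi_\nu}}$. Dividing by $\nu$ after taking the logarithm gives the claimed inequality exactly.

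The main obstacle is justifying the $T_{\rm cpt}$-equivariant choice of approximating sections in the first step: without it, cross terms $\eta_{j,m}\bar\eta_{j,m'}$ with $m\ne m'$ appear in $|f_j^{(\nu)}|^2$, and a direct Cauchy--Schwarz only yields the inequality up to an extra factor equal to $N_\nu=|\nu\Box_{\rm Nef}(L_0,h)\cap M|=O(\nu^n)$, i.e.\ with an error $O(\tfrac{\log\nu}{\nu})$. This error is harmless in the limit $\nu\to\infty$ taken in Lemma \ref{mainlem}, but to obtain the exact form stated here one really has to use the $T_{\rm cpt}$-equivariant averaging to eliminate the cross terms.
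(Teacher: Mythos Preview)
Your argument is correct, but it follows a genuinely different route from the paper's, and the comparison is instructive.

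The paper does \emph{not} arrange the $f_j^{(\nu)}$ to be torus eigenvectors; it proves the inequality for the sections exactly as they come out of Demailly's approximation. The idea is to control the Newton polytope $P(\varphi_\nu)_\sigma:=\tfrac1\nu P(f_1^{(\nu)},\dots,f_{N_\nu}^{(\nu)})_{(0,\dots,0,z_0)}$, observe via Proposition~\ref{psdeffrel} that it sits inside $S(L_0,h)_\sigma$, and then pass to a smooth toric subdivision $\tilde\Sigma$ of $\Sigma$ (Lemma~\ref{lem2}) adapted to the finite generating set of $P(\varphi_\nu)_\sigma$. On each $K_{\tilde\sigma}$ the minimum $\min_{m\in P(\varphi_\nu)_\sigma}\langle m,\cdot\rangle$ is attained at a single $m_0$, so every $\mu^\ast f_j^{(\nu)}$ becomes divisible by the monomial $\prod_k \tilde x_k^{\langle\nu m_0,\tilde v_k\rangle}$; the quotient $\tfrac1\nu\log\sum_j|g_j^{(\nu)}|^2$ is then genuinely plurisubharmonic on $K_{\tilde\sigma}$, and the maximum principle pushes its maximum to the distinguished boundary $\{|\tilde x_k|=1\}$, where it agrees with $\varphi_\nu$ at a point of $K_\sigma$, hence is $\le M_{\varphi_\nu}$. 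In short, the paper replaces your equivariance step by a geometric linearisation of the piecewise-linear function $w\mapsto\min_m\langle m,w\rangle$ via modification.

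What each approach buys: your method is shorter and more elementary once the $T_{\rm cpt}$-invariant setup is in place, but it changes the meaning of ``the $\varphi_\nu$ of Lemma~\ref{mainlem}'' and requires checking that $\varphi_{\rm min}$, $\varphi_+$, and the volume form can all be averaged without destroying the hypotheses of Demailly's theorem (they can, as you outline). The paper's approach proves the lemma for \emph{arbitrary} finite families $\{f_j^{(\nu)}\}\subset H^0(X,\nu L)$, with no invariance assumption; the price is the subdivision machinery of \S3.2--3.3, which is in any case used elsewhere. Your closing remark that ``one really has to use the $T_{\rm cpt}$-equivariant averaging'' to get the exact inequality is therefore not quite right: the paper achieves it without equivariance, via subdivision and the maximum principle.
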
 

\begin{proof}
Let $P(\varphi_\nu)_\sigma:=\frac{1}{\nu}P(f^{(\nu)}_1, f^{(\nu)}_2, \dots, f^{(\nu)}_{N_\nu})_{(0, 0, \dots, 0, z_0)}$. 
According to Proposition \ref{psdeffrel} and Observation \ref{fobs}, $\nu P(\varphi_\nu)_\sigma$ is a subset of $S(L_0^{\nu}, \nu h)_\sigma$. 
Since $\Box_{\rm Nef}(L_0^{\nu}, \nu h)=\nu \Box_{\rm Nef}(L_0, h)$ holds, it turns out that $S(L_0^{\nu}, \nu h)_\sigma=\nu S(L_0, h)_\sigma$
, thus we obtain
\[
P(\varphi_\nu)_\sigma\subset S(L_0, h)_\sigma . 
\]
Therefore, according to Remark \ref{Srmk}, it is sufficient to show the inequality 
\[
\varphi_\nu\leq \max_{m\in P(\varphi_\nu)_\sigma}\log\prod_{j=1}^n|x_j|^{2\langle m, v_j\rangle}+M_{\varphi_\nu}
\]
on $K_\sigma$. 

According to Remark \ref{Prmk}, there exists a finite subset $A$ of $P(\varphi_\nu)$ whose elements are rational and which satisfies $P(\varphi_\nu)=\overline{\overline{A}}$. 
For this set $A$, we fix such a subdivision $\tilde{\Sigma}$ of $\Sigma$ as in Lemma \ref{lem2}. 
In the following, we use notations we used in Section 4.2.
According to Corollary \ref{coord1}, it is sufficient to show that 
\[
\mu^*\varphi_\nu\leq \mu^*\left(\max_{m\in P(\varphi_\nu)_\sigma}\log\prod_{j=1}^n|x_j|^{2\langle m, v_j\rangle}\right)+M_{\varphi_\nu}
\]
on $K_{\tilde{\sigma}}=\{(\tilde{x}_1, \tilde{x}_2, \dots, \tilde{x}_n, z)\in\mathbb{T}_N(\tilde{\sigma}, \mathcal{L})|_{\overline{U}}\mid \forall j\in\{1, 2, \dots, n\}, |\tilde{x}_j|\leq 1\}$
for each $\tilde{\sigma}\in(\tilde{\Sigma}_\sigma)_{\rm max}$. \par
Since 
\[
\log\prod_{j=1}^n|\mu^*x_j|^{2\langle m, v_j\rangle}
=\log\prod_{j=1}^n\prod_{k=1}^n|\tilde{x}_k|^{2\langle m, v_j\rangle\langle v^j, \tilde{v}_k\rangle}
=\sum_{k=1}^n\langle m, \tilde{v}_k\rangle\log|\tilde{x}_k|^2
\]
holds, we obtain 
\[
 \mu^*\left(\max_{m\in P(\varphi_\nu)_\sigma}\log\prod_{j=1}^n|x_j|^{2\langle m, v_j\rangle}\right)
=\max_{m\in P(\varphi_\nu)_\sigma}\sum_{j=1}^n\langle m, \tilde{v}_j\rangle\log|\tilde{x}_j|^2. 
\]
As $\log|\tilde{x}_j|^2\leq 0$ holds for all $j$ on $K_{\tilde{\sigma}}$, the equation we desire can be rewritten as
\[
\mu^*\varphi_\nu\leq \log\prod_{j=1}^n|\tilde{x}_j|^{2\langle m_0, \tilde{v}_j\rangle}+M_{\varphi_\nu}, 
\]
where $m_0\in P(\varphi_\nu)_\sigma$ is such an element as in Lemma \ref{lem2}. 

Let $P(\varphi_\nu)_{\tilde{\sigma}}:=\frac{1}{\nu}P(\mu^*f^{(\nu)}_1, \mu^*f^{(\nu)}_2, \dots, \mu^*f^{(\nu)}_{N_\nu})_{(0, 0, \dots, 0, z_0)}$. 
According to Remark \ref{Pinv}, and since both $P(\varphi_\nu)_{\tilde{\sigma}}$ and $P(\varphi_\nu)_\sigma$ are generated by the same set, 
it turns out that $\mu^*f^{(\nu)}_j$ can be divided by the function $\prod_{k=1}^n(x_k)^{\langle \nu m_0, \tilde{v}_k\rangle}$ for all $j\in\{1, 2, \dots, N_\nu\}$. 
Denoting the quotient by $g^{(\nu)}_j$, 
the function $\mu^*\varphi_\nu-\log\prod_{j\in I}|\tilde{x}_j|^{2\langle m_0, \tilde{v}_j\rangle}$ can be rewritten as 
\[
\mu^*\varphi_\nu-\log\prod_{j=1}^n|\tilde{x}_j|^{2\langle m_0, \tilde{v}_j\rangle}
=\frac{1}{\nu}\log\sum_{j=1}^{N_\nu}|g^{(\nu)}_j|^2. 
\] 
Thus, this function is a plurisubharmonic function on $K_{\tilde{\sigma}}$, and it has the maximum value on $K_{\tilde{\sigma}}$, which we denote by $M_{\varphi_\nu, \tilde{\sigma}}$. 
Then, since 
\[
\mu^*\varphi_\nu\leq \log\prod_{j=1}^n|\tilde{x}_j|^{2\langle m_0, \tilde{v}_j\rangle}+M_{\varphi_\nu, \tilde{\sigma}}
\]
holds on $K_{\tilde{\sigma}}$. Therefore, it remains to prove that $M_{\varphi_\nu, \tilde{\sigma}}\leq M_{\varphi_\nu}$. 

Assume that the plurisubharmonic function $\mu^*\varphi_\nu-\log\prod_{j\in I}|\tilde{x}_j|^{2\langle m_0, \tilde{v}_j\rangle}$ has the maximum value 
at the point $((\tilde{x}_0)_1, (\tilde{x}_0)_2, \dots, (\tilde{x}_0)_n, z_0)\in K_{\tilde{\sigma}}$. 
We may assume 
$|(\tilde{x}_0)_j|=1$
for all $j$ after we change the point $((\tilde{x}_0)_1, (\tilde{x}_0)_2, \dots, (\tilde{x}_0)_n, z_0)\in K_{\tilde{\sigma}}$ if necessary. 
It is because, in the case when $|(\tilde{x}_0)_1|<1$ for example, by considering the plurisubharmonic function
\[
 \tilde{x}_1\mapsto\mu^*\varphi_\nu(\tilde{x}_1, (\tilde{x}_0)_2, (\tilde{x}_0)_3, \dots, (\tilde{x}_0)_n, z_0)
-\log\left(|\tilde{x}_1|^{2\langle m_0, \tilde{v}_1\rangle}\cdot\prod_{j=2}^n|(\tilde{x}_0)_j|^{2\langle m_0, \tilde{v}_j\rangle}\right)
\]
defined on $\{|\tilde{x}_1|<1\}$, the value of the function above must constantly be $M_{\varphi_\nu, \tilde{\sigma}}$. 

Then, we can calculate that 
\[
 M_{\varphi_\nu, \tilde{\sigma}}=
\mu^*\varphi_\nu((\tilde{x}_0)_1, (\tilde{x}_0)_2, \dots, (\tilde{x}_0)_n, z_0)
-\log\prod_{j=1}^n|(\tilde{x}_0)_j|^{2\langle m_0, \tilde{v}_j\rangle}
=\varphi_\nu(\mu((\tilde{x}_0)_1, (\tilde{x}_0)_2, \dots, (\tilde{x}_0)_n, z_0)). 
\]
Since $\mu((\tilde{x}_0)_1, (\tilde{x}_0)_2, \dots, (\tilde{x}_0)_n, z_0)\in K_\sigma$, the value is at most $M_{\varphi_\nu}$. 
\end{proof} 

\begin{proof}[Proof of Proposition \ref{main_theorem}]
Let us denote by $h$ the singular hermitian metric defined by $\{e^{-\psi_\sigma}\}_\sigma$, and by $h_{\infty}$ a smooth hermitian metric on $L$. 
Then, there exist upper semi-continuous functions $\varphi_{\rm min}'$ and $\psi'$ on $X$ such that 
\[
 h_{\rm min}=h_\infty e^{-\varphi_{\rm min}'},\ h=h_\infty e^{-\psi '}
\]
hold.  
Here, it is sufficient to prove that there exists a constant $C$ such that 
\[
\varphi_{\rm min}'\leq \psi '+C
\]
holds on $\pi^{-1}(\overline{U})\subset X$. \par
According to Lemma \ref{mainlem}, for each $\sigma\in\Sigma_{\rm max}$, there exists a constant $C_\sigma$ such that 
\[
 \varphi_{\rm min}'\leq \psi '+C_\sigma
\]
holds on the set $K_\sigma=\{(x_1, x_2, \dots, x_n, z)\in\mathbb{T}_N(\Sigma, \mathcal{L})|_{\overline{U}}\mid \forall j\in \{1, 2, \dots, n\}, |x_j|\leq 1\}$. 
Thus, according to Lemma \ref{1cover}, 
\[
\varphi_{\rm min}'\leq \psi '+C
\]
holds on $\pi^{-1}(\overline{U})\subset X$, where $C=\max_{\sigma\in\Sigma_{\rm max}}C_\sigma$. 
\end{proof}

\section{Properties related to the singularities of minimal singular metrics}

\subsection{Kiselman numbers and Lelong numbers of minimal singular metrics and Non-nef loci}

Let $X$ be a smooth projective variety and $L$ be a holomorphic line bundle over $X$. 
According to \cite[3.6]{B}, the next proposition follows. 

\begin{proposition}\label{NNef}
If $L$ is big, then the non-nef locus ${\rm NNef}(L)$ of $L$ can be written as 
\[
 {\rm NNef}(L)=\{x\in X\mid \nu(\varphi_{\rm min}, x)>0\}, 
\]
where $e^{-\varphi_{\rm min}}$ is a minimal singular metric on $L$. 
\end{proposition}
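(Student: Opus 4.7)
The plan is to unpack Boucksom's definition of the non-nef locus and prove the two inclusions separately. Recall that for a pseudo-effective class one sets
\[
{\rm NNef}(L) = \bigcup_{\epsilon > 0} E_{n\epsilon}(L), \qquad E_{n\epsilon}(L) = \bigcap_{T}E_+(T),
\]
where the inner intersection runs over all closed $(1,1)$-currents $T\in c_1(L)$ with $T \geq -\epsilon\omega$ for a fixed K\"ahler form $\omega$, and $E_+(T) = \{x \in X \mid \nu(T,x) > 0\}$ is the non-K\"ahler locus of $T$. Throughout, let $T_{\rm min} = \Theta_{h_{\rm min}}$ be the curvature current of a minimal singular metric.

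The inclusion $\{\nu(\varphi_{\rm min},x) = 0\} \subseteq X\setminus {\rm NNef}(L)$ is immediate. The current $T_{\rm min}$ is positive and lies in $c_1(L)$, so in particular $T_{\rm min} \geq -\epsilon\omega$ for every $\epsilon > 0$. If $\nu(\varphi_{\rm min},x_0) = 0$ then $x_0 \notin E_+(T_{\rm min})$, so $x_0$ is excluded from $E_{n\epsilon}(L)$ for every $\epsilon > 0$, and the union over $\epsilon$ misses $x_0$ as well.

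For the reverse inclusion, set $\gamma := \nu(\varphi_{\rm min},x_0) > 0$. By the definition of a minimal singular metric, every local weight $\psi$ of a singular hermitian metric with semi-positive curvature current satisfies $\psi \le \varphi_{\rm min} + O(1)$ in a neighbourhood of $x_0$, which yields $\nu(\psi,x_0) \ge \gamma$; equivalently $\nu(S,x_0) \ge \gamma$ for \emph{every} genuine positive current $S\in c_1(L)$. To promote this to currents satisfying only $T \geq -\epsilon\omega$, I invoke the bigness hypothesis: there exists a K\"ahler current $T_0 \in c_1(L)$ with $T_0 \geq \delta\omega$ for some $\delta > 0$, and (after a Demailly regularization if necessary) we may arrange $M := \nu(T_0,x_0) < \infty$. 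For any admissible $T$ with $T \ge -\epsilon\omega$, the convex combination
\[
T_{\rm mix} := \frac{\delta}{\delta+\epsilon}T + \frac{\epsilon}{\delta+\epsilon}T_0
\]
is an honest closed positive current in $c_1(L)$, so by minimality $\nu(T_{\rm mix},x_0) \geq \gamma$, which rearranges to
\[
\nu(T,x_0) \;\ge\; \gamma - \frac{\epsilon}{\delta}(M-\gamma).
\]
The right-hand side is strictly positive once $\epsilon$ is small enough (depending on $\gamma$, $\delta$, $M$ but \emph{not} on $T$), so $x_0 \in E_+(T)$ for every admissible $T$ and hence $x_0 \in E_{n\epsilon_0}(L) \subseteq {\rm NNef}(L)$.

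The only delicate point is the existence of a K\"ahler current $T_0 \in c_1(L)$ with finite Lelong number at the prescribed point $x_0$; this is exactly what Demailly's approximation theorem supplies, producing K\"ahler currents with analytic singularities whose Lelong numbers are automatically finite everywhere. Once this ingredient is in place, the remainder is the convex-combination bookkeeping above.
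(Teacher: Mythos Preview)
Your argument is correct and is essentially the standard proof of this fact. Note, however, that the paper does not supply its own argument here at all: the proposition is stated with the preamble ``According to \cite[3.6]{B}, the next proposition follows'', so what you have written is a reconstruction of Boucksom's proof rather than an alternative to anything in the present paper. The convex-combination trick with a K\"ahler current $T_0 \ge \delta\omega$ to upgrade the bound $\nu(S,x_0)\ge\gamma$ from genuinely positive currents $S$ to almost-positive currents $T\ge -\epsilon\omega$ is exactly how Boucksom establishes that the minimal multiplicity $\nu(\alpha,x)$ agrees with $\nu(T_{\min},x)$ when $\alpha$ is big.

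One small comment: the caveat ``after a Demailly regularization if necessary'' to guarantee $M=\nu(T_0,x_0)<\infty$ is harmless but unnecessary. The Lelong number of any closed positive $(1,1)$-current at any point is automatically finite (it is dominated by a normalized mass on a small ball), so no regularization is needed for this step.
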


According to this proposition, 
we can specify the non-nef locus of a big line bundle 
by calculating the Lelong number of a minimal singular metric. 
It can be done, actually, in our setting. 

\begin{proposition}\label{Kiselman}
Let $X$ be the total space of a toric bundle $\mathbb{T}_N(\Sigma, \mathcal{L})$ over a complex torus and 
$L=\pi^*L_0\otimes\mathcal{O}_X(D_h)$ be a big line bundle over $X$, where $\Sigma$ is a smooth projective fan in a $n$-dimensional lattice $N$. 
The Kiselman number 
\[
\nu^K_{\zeta, w}(\varphi_{\rm min}, x_0)=\sup\left\{t\geq 0\left| \varphi_{\rm min}\leq t\log\sum_{j=1}^{n+d}|\zeta_j|^{2w_j}+O(1) \text{\ around\ }x_0 \right. \right\}
\]
associated to the coordinates system 
\[
\zeta=(\zeta_1, \zeta_2, \dots, \zeta_{n+d})=(x_1, x_2, \dots, x_n, z_1, z_2, \dots, z_d) 
\]
and $w=(w_j)\in\bigoplus_{j\in I}\mathbb{R}_{> 0}$ 
of a minimal singular metric $e^{-\varphi_{\rm min}}$ 
at a point \\
$x_0=((x_0)_1, (x_0)_2, \dots, (x_0)_n, z_0)\in\mathbb{T}_N(\sigma, \mathcal{L})$ 
(see \cite[Section 5.2]{BFJ} for the definition) 
can be calculated by using notations in the previous section that 
\[
\nu^K_{\zeta, w}(\varphi_{\rm min}, x_0)=\min_{m\in S(L_0, h)_\sigma}\left\langle m, \sum_{j\in I}\frac{v_j}{w_j}\right\rangle, 
\]
where we denote by $I$ the set $\{j\mid x_0^j=0\}$ and by $(x_1, x_2, \dots, x_n, z_1, z_2, \dots, z_d)$ 
the canonical coordinates system of $\mathbb{T}_N(\sigma, \mathcal{L})|_U$ associated to $N$-minimal generators 
$v_1, v_2, \dots, v_n$ of $\sigma$. 
Especially, the Lelong number at $x_0$ can be calculated that 
\[
\nu(\varphi_{\rm min}, x_0)
=\min_{m\in S(L_0, h)_\sigma}\sum_{j\in I}\langle m, v_j\rangle . 
\]
\end{proposition}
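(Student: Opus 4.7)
The plan is to reduce the Kiselman computation to that of an explicit toric-type function on a coordinate chart. Since $\nu^K_{\zeta,w}$ depends only on the $\sim_{\rm sing}$-class, Theorem \ref{main_theorem} allows me to replace $\varphi_{\rm min}$ by the explicit weight $\psi_\sigma$ constructed in Section~4. Applying Lemma \ref{lem1} with $I=\{j:(x_0)_j=0\}$, combined with Remark \ref{Srmk}, one has on a small polydisc neighborhood of $x_0$
\[
\psi_\sigma(x,z) \;=\; \max_{m \in S(L_0,h)_\sigma}\; \sum_{j \in I} 2\langle m, v_j\rangle \log|x_j| \;+\; O(1),
\]
and the maximum on the right can equivalently be taken over the compact parameter set $\Box_{\rm Nef}(L_0, h) - m_\sigma$. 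The problem is now to compute the Kiselman number of this explicit right-hand side.

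For a single $m$, introduce $s_j := -\log|x_j| \geq 0$ for $j\in I$. Using the elementary estimate $\log\sum_{j\in I}|x_j|^{2w_j} = -2\min_{j \in I} w_j s_j + O(1)$, the Kiselman-defining inequality for $\Psi_m(x) := \sum_{j \in I} 2\langle m, v_j\rangle \log|x_j|$ becomes
\[
\sum_{j \in I} \langle m, v_j\rangle\, s_j \;\geq\; t\, \min_{j \in I} w_j s_j \;-\; O(1), \qquad s \in \mathbb{R}_{\geq 0}^{|I|}.
\]
Testing along the ray $s_j = \lambda/w_j$ ($\lambda \to \infty$) forces $t \leq \langle m, \sum_{j\in I} v_j/w_j\rangle$; conversely, stratifying $\mathbb{R}_{\geq 0}^{|I|}$ by which index achieves $\min_{j}w_j s_j$ and carrying out a direct linear optimization shows that this condition is also sufficient, with a resulting constant that can be chosen independently of $m$. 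Hence each $\Psi_m$ has Kiselman number exactly $\langle m, \sum_{j\in I} v_j/w_j\rangle$.

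The uniformity of the constant as $m$ varies over the compact set $\Box_{\rm Nef}(L_0,h) - m_\sigma$ allows me to pass from the single-$m$ estimate to the maximum, yielding
\[
\nu^K_{\zeta,w}(\psi_\sigma,x_0) \;=\; \min_{m \in \Box_{\rm Nef}(L_0,h) - m_\sigma}\;\bigl\langle m,\, \textstyle\sum_{j \in I} v_j/w_j\bigr\rangle.
\]
To match this with the minimum over $S(L_0,h)_\sigma$, observe that $w_0 := \sum_{j \in I} v_j/w_j$ lies in $\sigma$, so applying the defining condition of $\overline{\overline{\,\cdot\,}}$ at $w_0$ gives $\inf_{m \in S(L_0,h)_\sigma}\langle m, w_0\rangle \geq \min_{m \in \Box_{\rm Nef}-m_\sigma}\langle m, w_0\rangle$; the opposite inequality follows from $\Box_{\rm Nef} - m_\sigma \subset S(L_0,h)_\sigma$. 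The Lelong number formula is then the specialization $w=(1,\ldots,1)$ via \cite[5.2]{BFJ}.

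The principal technical obstacle is obtaining uniformity of the $O(1)$ constants as $m$ ranges over the (possibly unbounded) family $S(L_0,h)_\sigma$. This is resolved by the identity in Remark \ref{Srmk}, which reduces the outer maximum to one over the compact $\Box_{\rm Nef}(L_0,h) - m_\sigma$; the direct optimization then produces a uniform (in fact trivial) constant, and passage from single functions to the maximum becomes routine.
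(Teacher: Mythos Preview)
Your proof is correct and follows precisely the route the paper implicitly relies on: the paper states Proposition~\ref{Kiselman} without an explicit proof, treating it as a direct consequence of Theorem~\ref{main_theorem} together with the explicit local shape of $\psi_\sigma$ given by Lemma~\ref{lem1} and Remark~\ref{Srmk}. Your argument spells out exactly this computation---reducing to the compact parameter set $\Box_{\rm Nef}(L_0,h)-m_\sigma$, verifying the single-$m$ Kiselman number by the linear inequality $\sum_{j\in I}\langle m,v_j\rangle s_j\geq \langle m,w_0\rangle\min_{j\in I}w_js_j$ (which holds with constant $0$ since each $\langle m,v_j\rangle\geq 0$), and then identifying the minimum over $\Box_{\rm Nef}-m_\sigma$ with that over $S(L_0,h)_\sigma$ via the defining property of $\overline{\overline{\,\cdot\,}}$---so there is nothing substantively different to compare.

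One small point worth making explicit: the reduction of the comparison function $\log\sum_{j=1}^{n+d}|\zeta_j|^{2w_j}$ to a sum only over $j\in I$ is justified because $\psi_\sigma$ is bounded in the remaining (centered) coordinates near $x_0$, so restricting to the slice $\zeta_j=0$ for $j\notin I$ gives one inequality, while monotonicity of $\log$ gives the other. You use this without comment, but it is harmless.
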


\begin{corollary}
Let $X, L$ be as that of the previous proposition. 
The following conditions 
are equivalent. 
\begin{enumerate}
\item $\varphi_{\rm min}(x_0)(=\psi_\sigma(x_0))=-\infty$. 
\item $\psi_\sigma$ is not continuous at $x_0$. 
\item $\nu(\varphi_{\rm min}, x_0)(=\nu(\psi_\sigma, x_0))>0$.  
\end{enumerate}
especially, 
\[
 \varphi_{\rm min}^{-1}(-\infty)={\rm Pole}(\varphi_{\rm min})
\]
holds, where we denote by ${\rm Pole}(\varphi_{\rm min})$ the set $\{x\in X\mid \nu(\varphi_{\rm min}, x)>0\}$. 
\end{corollary}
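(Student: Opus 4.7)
The plan is to work with the explicit minimal singular metric $\psi_\sigma$ from Theorem \ref{main_theorem}. Since $\psi_\sigma \sim_{\rm sing} \varphi_{\rm min}$, the $-\infty$-locus and the Lelong numbers of $\psi_\sigma$ and $\varphi_{\rm min}$ coincide; in particular, it suffices to verify (1)--(3) with $\psi_\sigma$ throughout, and the final equality $\varphi_{\rm min}^{-1}(-\infty) = {\rm Pole}(\varphi_{\rm min})$ is then the pointwise global reformulation of (1) $\Leftrightarrow$ (3).

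First I would fix canonical coordinates $(x_1, \dots, x_n, z)$ of $\mathbb{T}_N(\sigma, \mathcal{L})|_U$ at $x_0$ and set $I = \{j : (x_0)_j = 0\}$. Lemma \ref{lem1} (together with Remark \ref{Srmk}) gives, in a neighborhood of $x_0$,
\[
\psi_\sigma = \max_{m \in \Box_{\rm Nef}(L_0, h)} \log \prod_{j \in I} |x_j|^{2\langle m - m_\sigma, v_j\rangle} + O(1).
\]
Using the convention $0^0 = 1$, evaluating at $x_0$ shows $\psi_\sigma(x_0) > -\infty$ iff the set $M_0 := \{m \in \Box_{\rm Nef}(L_0, h) : \langle m - m_\sigma, v_j\rangle = 0 \text{ for every } j \in I\}$ is nonempty. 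Since minimization of a linear functional on $\overline{\overline{A}}$ agrees with the minimization over $A$, emptiness of $M_0$ is equivalent to $\min_{m \in S(L_0, h)_\sigma} \sum_{j \in I} \langle m, v_j\rangle > 0$ (each summand being $\geq 0$ because $m \in \sigma^\vee$ and $v_j \in \sigma$), which by Proposition \ref{Kiselman} is exactly condition (3). This establishes (1) $\Leftrightarrow$ (3).

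Next I would handle the equivalence with (2). The implication (1) $\Rightarrow$ (2) is immediate, since a real-valued continuous function cannot attain $-\infty$. For the converse I argue by contraposition: assuming $\psi_\sigma(x_0) > -\infty$, the set $M_0$ is nonempty, and since $\psi_{\sigma, m}(x_0) = -\infty$ for $m \notin M_0$ we have $\psi_\sigma(x_0) = \max_{m \in M_0} \psi_{\sigma, m}(x_0)$. Each $\psi_{\sigma, m}$ with $m \in M_0$ is continuous at $x_0$, and from $\psi_\sigma \geq \psi_{\sigma, m}$ pointwise we obtain $\liminf_{x \to x_0} \psi_\sigma(x) \geq \psi_{\sigma, m}(x_0)$; taking the supremum over $m \in M_0$ yields $\liminf_{x \to x_0} \psi_\sigma(x) \geq \psi_\sigma(x_0)$. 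Combined with the upper semicontinuity inherent to plurisubharmonic functions, this gives continuity at $x_0$.

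The most delicate step I expect is the evaluation of $\psi_\sigma(x_0)$ via the $0^0 = 1$ convention in Definition \ref{psisigmamdef} and the translation between the three equivalent formulations of the nonemptiness of $M_0$ (in terms of $\Box_{\rm Nef}(L_0, h)$, of $S(L_0, h)_\sigma$, and of the minimum in Proposition \ref{Kiselman}); once this is settled, compactness of $\Box_{\rm Nef}(L_0, h)$ from Lemma \ref{BOX} makes the remaining limit arguments essentially formal, and the global statement ${\rm Pole}(\varphi_{\rm min}) = \varphi_{\rm min}^{-1}(-\infty)$ follows from (1) $\Leftrightarrow$ (3) applied at each point of $X$.
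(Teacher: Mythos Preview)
Your proof is correct and follows precisely the route the paper intends: the Corollary is stated without proof as an immediate consequence of Proposition \ref{Kiselman} and the explicit description of $\psi_\sigma$ (via Lemma \ref{lem1} and Definition \ref{psisigmamdef}), and your argument carries this out in the natural way. The only point worth tightening is the phrase ``minimization of a linear functional on $\overline{\overline{A}}$ agrees with the minimization over $A$'': this holds specifically for functionals $\langle\,\cdot\,,w\rangle$ with $w\in\sigma$ (here $w=\sum_{j\in I}v_j$), which is exactly the content of the definition of $\overline{\overline{A}}$, so you may want to say that explicitly.
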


The next proposition is also obtained easily by Theorem \ref{main_theorem}. 

\begin{proposition}
Let $X, L$ be as that of Proposition \ref{Kiselman}. 
Then, ${\rm Pole}(\varphi_{\rm min})$ is a Zariski closed set. 
\end{proposition}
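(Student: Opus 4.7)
The plan is to use the explicit Lelong-number formula from Proposition~\ref{Kiselman} to realize $\mathrm{Pole}(\varphi_{\rm min})$ as a finite union of orbit closures $\mathbb{V}(\tau,\mathcal{L})$, which are already known to be closed analytic subvarieties of $X$.

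First I would invoke the corollary just above, which gives $\mathrm{Pole}(\varphi_{\rm min})=\{x\in X\mid\nu(\varphi_{\rm min},x)>0\}$, and then substitute the formula
$$\nu(\varphi_{\rm min},x_0)=\min_{m\in S(L_0,h)_\sigma}\sum_{j\in I}\langle m,v_j\rangle,\qquad I=\{j\mid (x_0)_j=0\},$$
valid when $x_0$ lies in the chart $\mathbb{T}_N(\sigma,\mathcal{L})|_U$. Since $\{x_j=0\}$ is the local equation of $\Gamma_{v_j}$, the index set $I$ is determined by which orbit $\mathbb{O}_\tau(\mathcal{L})$ contains $x_0$: namely $I=I_\tau$ is the set of ray indices of $\tau$. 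Hence the Lelong number is constant on each orbit $\mathbb{O}_\tau(\mathcal{L})$ with a common value $\nu_\tau\ge 0$, and the pole locus is the disjoint union of those $\mathbb{O}_\tau(\mathcal{L})$ with $\nu_\tau>0$.

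The heart of the argument is a monotonicity claim: if $\tau\subset\tau'$ in $\Sigma$, then $\nu_\tau\le\nu_{\tau'}$. To see this I would pick any $\sigma\in\Sigma_{\rm max}$ containing $\tau'$ (and hence $\tau$); then $I_\tau\subset I_{\tau'}$, and because every $m\in S(L_0,h)_\sigma\subset\sigma^\vee$ pairs non-negatively with each ray generator of $\sigma$, the inequality $\sum_{j\in I_\tau}\langle m,v_j\rangle\le\sum_{j\in I_{\tau'}}\langle m,v_j\rangle$ holds pointwise in $m$ and survives taking the minimum. This monotonicity is the main obstacle in the plan; I would also briefly note that $\nu_\tau$ is well defined independently of the ambient choice of $\sigma\supset\tau$, since the Lelong number is intrinsic to the germ of $\varphi_{\rm min}$ at $x_0$.

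Finally, invoking the orbit stratification $\overline{\mathbb{O}_\tau(\mathcal{L})}=\mathbb{V}(\tau,\mathcal{L})=\bigsqcup_{\tau'\supset\tau}\mathbb{O}_{\tau'}(\mathcal{L})$ together with the upward closedness of $\{\tau:\nu_\tau>0\}$ just established, the disjoint union above rewrites as
$$\mathrm{Pole}(\varphi_{\rm min})=\bigcup_{\tau\in\Sigma,\ \nu_\tau>0}\mathbb{V}(\tau,\mathcal{L}),$$
a finite union of closed analytic subvarieties of $X$, hence Zariski closed.
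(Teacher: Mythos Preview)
Your argument is correct and is exactly the natural way to flesh out the paper's one-line claim that the proposition ``is also obtained easily by Theorem~\ref{main_theorem}.'' The paper gives no further proof, so there is nothing to compare in detail; your use of the Lelong-number formula from Proposition~\ref{Kiselman} (itself derived from Theorem~\ref{main_theorem}), the observation that $\nu(\varphi_{\rm min},\cdot)$ is constant along each orbit $\mathbb{O}_\tau(\mathcal{L})$, the monotonicity $\nu_\tau\le\nu_{\tau'}$ for $\tau\subset\tau'$, and the orbit-closure stratification together constitute precisely what the author had in mind.
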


According to these argument, we obtain the following corollary. 

\begin{corollary}\label{NNef_is_Z-closed}
Let $X$ be the total space of a toric bundle $\mathbb{T}_N(\Sigma, \mathcal{L})$ over a complex torus and 
$L=\pi^*L_0\otimes\mathcal{O}_X(D_h)$ be a big line bundle over $X$, where $\Sigma$ is a smooth projective fan. 
Then, the set ${\rm NNef}(L)$ is a Zariski closed subset of $X$. 
\end{corollary}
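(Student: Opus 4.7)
The plan is to realize ${\rm NNef}(L)$ as a set that has already been shown to be Zariski closed in the two results immediately preceding this corollary. Specifically, since $L$ is big by hypothesis, Proposition \ref{NNef} rewrites ${\rm NNef}(L) = \{x \in X \mid \nu(\varphi_{\rm min}, x) > 0\}$ for any minimal singular metric $e^{-\varphi_{\rm min}}$. The corollary just above then identifies this positivity locus with the pole set ${\rm Pole}(\varphi_{\rm min}) = \varphi_{\rm min}^{-1}(-\infty)$. Finally, the proposition just before that corollary asserts that ${\rm Pole}(\varphi_{\rm min})$ is Zariski closed. Chaining these three statements yields the corollary in one line: ${\rm NNef}(L) = {\rm Pole}(\varphi_{\rm min})$ is Zariski closed.

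The nontrivial content has already been pushed into Theorem \ref{main_theorem} and Proposition \ref{Kiselman}. Theorem \ref{main_theorem} supplies the explicit minimal singular metric $e^{-\psi_\sigma}$ with $\psi_\sigma = \max_{m \in \Box_{\rm Nef}(L_0, h)} \psi_{\sigma, m}$, and by Definition \ref{psisigmamdef} each $\psi_{\sigma, m}$ is logarithmically singular only along toric divisors $\{x_j = 0\} = \Gamma_{v_j}$ in canonical coordinates on $\mathbb{T}_N(\sigma, \mathcal{L})|_U$, while $\varphi_{L_0\otimes\mathcal{L}^m}$ is smooth. Taking the max over the compact convex set $\Box_{\rm Nef}(L_0, h)$ preserves this, so the pole locus is contained in the union $\bigcup_v \Gamma_v$ of the finitely many toric prime divisors of $X$, which is manifestly Zariski closed. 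Proposition \ref{Kiselman} then makes this even more precise: a toric divisor $\Gamma_{v_j}$ sits inside ${\rm NNef}(L)$ iff $\min_{m \in S(L_0, h)_\sigma}\langle m, v_j\rangle > 0$, a condition that can be checked cone by cone.

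The main obstacle, then, is not in the present corollary at all but in its prerequisites, namely obtaining the explicit form of a minimal singular metric and turning the formula into a usable Lelong number computation. Once Theorem \ref{main_theorem}, Proposition \ref{Kiselman}, and the preceding two results of this subsection are available, the corollary is essentially a bookkeeping step, and the only thing to verify is that the hypotheses (bigness of $L$, the toric bundle structure, smoothness of $\Sigma$) are indeed the ones needed to invoke each cited result.
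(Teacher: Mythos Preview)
Your proposal is correct and follows the paper's own approach exactly: the paper introduces the corollary with ``According to these argument, we obtain the following corollary,'' meaning it is deduced by chaining Proposition~\ref{NNef}, the unnamed corollary identifying $\varphi_{\rm min}^{-1}(-\infty)={\rm Pole}(\varphi_{\rm min})$, and the unnamed proposition asserting ${\rm Pole}(\varphi_{\rm min})$ is Zariski closed. Your additional explanatory paragraph on how Theorem~\ref{main_theorem} and Proposition~\ref{Kiselman} feed into those prerequisites is accurate and goes a bit beyond what the paper spells out, though note that the positivity condition on the Lelong number can cut out higher-codimension toric strata, not only full divisors $\Gamma_{v_j}$.
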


\subsection{Multiplier ideal sheaves}
Let $\Sigma$ be a smooth projective fan of a $n$-dimensional lattice $N$. 
Fix $N$-minimal generators $v_1, v_2, \dots, v_n$ of $\sigma\in\Sigma_{\rm max}$. 
Let $(x_1, x_2, \dots, x_n, z)$ be the canonical coordinates system 
of $\mathbb{T}_N(\sigma, \mathcal{L})|_U$ associated to $v_1, v_2, \dots, v_n$, 
where $U$ is a sufficiently small open set in $V$. 
In this section, we consider the condition 
\[
f\in\mathcal{J}(h_{\rm min}^t)_{((x_0)_1, (x_0)_2, \dots, (x_0)_n, z_0)}, 
\] 
where $((x_0)_1, (x_0)_2, \dots, (x_0)_n, z_0)$ is a point of $\mathbb{T}_N(\sigma, \mathcal{L})|_U$, $f$ is an element of $\mathcal{O}_{X, ((x_0)_1, (x_0)_2, \dots, (x_0)_n, z)}\setminus\{0\}$, 
$t$ is a positive real number, 
and $h_{\rm min}$ is a minimal singular metric on $L$. 
In the following, we also denote by $\mathcal{J}(t\varphi_{\rm min})$ the multiplier ideal sheaf $\mathcal{J}(h_{\rm min}^t)$ 
by using the local weight function $\varphi_{\rm min}$ of the singular hermitian metric $h_{\rm min}$. 

Let $I:=\{j\in\{1, 2, \dots, n\}\mid (x_0)_j=0\}$. 
For this set $I$, let us denote the expansion appeared in Definition \ref{Pdef} by
\[
f(x_1, x_2, \dots, x_n, z)
=\sum_{m\in {\rm Pr}^I(\sigma^{\vee}\cap M)}\prod_{j\in I}(x_j)^{\langle m, v_j\rangle} A_{m} (x_{I^c}, z), 
\]
where the map ${\rm Pr}^I$ is the projection from $M_{\mathbb{R}}$ to ${\rm Span}_{\mathbb{R}}\{v^j\}_{j\in I}$. 
As the dual version of this map, we denote the projection from $N_{\mathbb{R}}$ to ${\rm Span}_{\mathbb{R}}\{v_j\}_{j\in I}$ by ${\rm Pr}_I$ in the following. 
Fix a set $A\subset P(f)_((x_0)_1, (x_0)_2, \dots, (x_0)_n, z_0)$ of lattice points such that 
\[
P(f)_{((x_0)_1, (x_0)_2, \dots, (x_0)_n, z_0)}=\overline{\overline{A}}
\]
holds. 

\begin{corollary}\label{multip_ideals_of_hmin}
The followings are equivalent. 
\begin{eqnarray}
&(1)&f\in\mathcal{J}(t\varphi_{\rm min})_{((x_0)_1, (x_0)_2, \dots, (x_0)_n, z_0)}. \nonumber \\
&(2)&\min_{m\in tS(L_0, h)_\sigma}\langle m, w\rangle
<\langle m_0+\sum_{j\in I}v^j, w\rangle \text{\ for\ all\ } m_0\in A\text{\ and\ }w\in{\rm Pr}_I(\sigma)\setminus\{0\}. \nonumber
\end{eqnarray}
\end{corollary}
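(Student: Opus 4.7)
The plan is to reduce the integrability question underlying $\mathcal{J}(t\varphi_{\min})$ to an elementary convergence criterion for an exponential integral over a rational polyhedral cone, and then to recognize this criterion as condition (2).

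First, I would use Theorem \ref{main_theorem} to replace $\varphi_{\min}$ by $\psi_\sigma$ up to an additive bounded term, and then combine Lemma \ref{lem1} (with the choice of $I$ as in the statement) and Remark \ref{Srmk} to obtain
\[
 t\varphi_{\min} \;\sim_{\mathrm{sing}}\; \max_{m\in tS(L_0,h)_\sigma}\log \prod_{j\in I}|x_j|^{2\langle m,v_j\rangle}
\]
on a small polydisc around $((x_0)_1,\dots,(x_0)_n,z_0)$ in which the coordinates $x_j$ for $j\notin I$ remain bounded away from $0$. Thus, condition (1) is equivalent to the finiteness of
\[
\int |f(x,z)|^2 \cdot \min_{m\in tS(L_0,h)_\sigma}\prod_{j\in I}|x_j|^{-2\langle m,v_j\rangle}\,dV
\]
on that neighborhood.

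Next, I would pass to polar coordinates $x_j=r_je^{i\theta_j}$ for $j\in I$, integrate out the angles and the bounded coordinates, and make the logarithmic substitution $u_j=-\log r_j\ge 0$. Writing $\rho_I := \sum_{j\in I}v^j\in M$, the Jacobian contributes exactly the factor $\exp(-2\langle\rho_I,u\rangle)$, where we identify $u=(u_j)_{j\in I}$ with $\sum_{j\in I}u_jv_j\in{\rm Pr}_I(\sigma)$. Using the expansion of $f$ in powers of $x_I$ and the hypothesis $P(f)_{((x_0)_1,\dots,z_0)}=\overline{\overline{A}}$, together with the non-vanishing of the relevant Taylor coefficients $A_m$ on a set of positive measure in the other variables (which lets us bound them above and below after restricting to a suitable cross-section via Fubini), the finiteness of the above integral is equivalent to
\[
 \sum_{m_0\in A}\int_{{\rm Pr}_I(\sigma)} \exp\Bigl(-2\bigl[\langle m_0+\rho_I,u\rangle-\min_{m\in tS(L_0,h)_\sigma}\langle m,u\rangle\bigr]\Bigr)\,du <\infty.
\]

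Finally, I would invoke the standard criterion for convergence of an exponential integral over a closed rational cone: the integral converges iff the exponent is strictly positive on ${\rm Pr}_I(\sigma)\setminus\{0\}$, and by piecewise-linearity (using, e.g., a subdivision as in Lemma \ref{lem2}) and conic homogeneity this reduces to a uniform strict inequality on the unit sphere of the cone. Applied term by term in $m_0\in A$, this is precisely condition (2). To close the reduction to the finite set $A$, for any $m$ in the actual support of $f$ one has $m\in \overline{\overline{A}}$, so for every $w\in\sigma$ there exists $m_0\in A$ with $\langle m_0,w\rangle\le\langle m,w\rangle$; this propagates the strict inequality from $A$ to all of the support.

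The main obstacle is the handling of the coefficients $A_m$: to deduce the ``only if'' direction one must ensure that the Taylor coefficients produce a lower bound on $|f|^2$ that is not crushed by cancellations. The clean way to do this is to restrict to a generic value of $(x_{I^c},z)$ at which the relevant $A_m$'s are simultaneously non-vanishing (possible because they are not identically zero), then use Fubini to separate the $x_I$-integral from the others. A secondary technical point is making the equivalence of ``$\max$ of monomials'' with the corresponding $L^1$-sum uniform over the cone; this is handled by the piecewise-linear chamber decomposition of ${\rm Pr}_I(\sigma)$ induced by $S(L_0,h)_\sigma$, which reduces the problem on each chamber to integrating a single exponential.
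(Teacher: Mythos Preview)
Your argument is correct in outline and yields the corollary, but it follows a different route from the paper. The paper does not carry out the integral computation: once Theorem~\ref{main_theorem} together with Lemma~\ref{lem1} has reduced $t\varphi_{\rm min}$ to the toric plurisubharmonic weight $\max_{m\in tS(L_0,h)_\sigma}\log\prod_{j\in I}|x_j|^{2\langle m,v_j\rangle}$, the paper simply invokes Guenancia's description \cite{G} of the multiplier ideal of a toric plurisubharmonic function (the analytic analogue of Howald's theorem \cite{H}), which delivers the Newton-polytope criterion (2) directly. Your proposal instead re-derives that criterion by hand via the logarithmic substitution and the exponential-integral convergence test on the cone ${\rm Pr}_I(\sigma)$; this is more self-contained and makes the mechanism explicit, at the cost of some length. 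Two small points worth tightening: the appeal to Lemma~\ref{lem2} is misplaced, since $S(L_0,h)_\sigma$ need not be rational (in the examples of Section~6 it is not), but this is harmless because your convergence criterion only requires continuity and positive homogeneity of the exponent, both of which hold without any rational subdivision; and for the lower bound on $|f|^2$ the cleanest device is Parseval in the angular variables, which kills the cross terms and yields $\int|f|^2\,d\theta_I=(2\pi)^{\#I}\sum_m r_I^{2m}|A_m(x_{I^c},z)|^2$, so that each monomial in the support contributes a nonnegative summand---this is sharper than the generic-slice argument you sketch and avoids any worry about cancellations.
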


Corollary \ref{multip_ideals_of_hmin} immediately follows from Theorem \ref{main_theorem} and 
the result of Guenancia \cite{G} referring to the way to compute the multiplier ideal sheaves associated to 
``toric plurisubharmonic functions", 
which can be regarded as a generalization of the famous Howald's result (\cite[Theorem 11]{H}) in algebraic setting. 

According to Corollary \ref{multip_ideals_of_hmin}, \cite[1.10, 1.11]{DEL}, and \cite[11.2.12 (ii)]{L2}, we obtain next corollary. 
\begin{corollary}\label{conti}
Let $X$ be the total space of a smooth projective toric bundle over a complex torus, 
$D$ a big divisor on $X$, 
and $e^{-\varphi_{\rm min}}$ be a minimal singular metric on the line bundle $\mathcal{O}_X(D)$. \\
$(1)$ If $f\in\mathcal{J}(t\varphi_{\rm min})_{x_0}$ at the point $x_0$, then 
$f\in\mathcal{J}((1+\varepsilon)t\varphi_{\rm min})_{x_0}$ holds for sufficiently small positive number $\varepsilon$ and any positive real number $t$.  
Especially, since the sheaf $\mathcal{J}(t\varphi_{\rm min})$ is coherent, it follows that 
\[
\mathcal{J}(t\varphi_{\rm min})=\mathcal{J}_+(t\varphi_{\rm min}). 
\]
 \\
$(2)$ Let $P$ be a nef big divisor on $X$, then 
\[
  H^j(X, \mathcal{O}_X(K_X+P+L)\otimes\mathcal{J}(\varphi_{\rm min}))=0
\]
holds for all $j>0$. 
\end{corollary}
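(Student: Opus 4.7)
\textbf{Proof proposal for Corollary~\ref{conti}.}

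For part (1), the plan is to combine the combinatorial criterion of Corollary~\ref{multip_ideals_of_hmin} with a compactness argument. By that criterion, $f \in \mathcal{J}(t\varphi_{\rm min})_{x_0}$ amounts to the strict inequality
\[
t\cdot \min_{m \in S(L_0, h)_\sigma}\langle m, w\rangle \;<\; \Bigl\langle m_0 + \sum_{j\in I} v^j,\; w\Bigr\rangle
\]
holding for every $m_0$ in a finite generating set $A$ of $P(f)_{x_0}$ (Remark~\ref{Prmk}) and every $w \in {\rm Pr}_I(\sigma) \setminus \{0\}$. The minimum on the left can be computed over the compact set $\Box_{\rm Nef}(L_0, h) - m_\sigma$ (Lemma~\ref{BOX}), since the operation $\overline{\overline{\,\cdot\,}}$ does not change the value of the minimum on $\sigma$; hence the left-hand side is continuous in $w$. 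Positive homogeneity in $w$ then lets me restrict to the compact slice ${\rm Pr}_I(\sigma) \cap \{\|w\|=1\}$, and with $A$ finite the pointwise strict inequality becomes a uniform gap, which survives after replacing $t$ by $(1+\varepsilon)t$ for $\varepsilon > 0$ small enough. Feeding this back into Corollary~\ref{multip_ideals_of_hmin} yields $f \in \mathcal{J}((1+\varepsilon)t\varphi_{\rm min})_{x_0}$.

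The equality $\mathcal{J}(t\varphi_{\rm min}) = \mathcal{J}_+(t\varphi_{\rm min})$ then follows stalk-wise from the definition of $\mathcal{J}_+$ as the filtered limit of $\mathcal{J}((1+\varepsilon)t\varphi_{\rm min})$ as $\varepsilon \to 0^+$; the inclusion $\mathcal{J}_+\subset \mathcal{J}$ is automatic, the reverse inclusion is what was just proved, and coherence of $\mathcal{J}(t\varphi_{\rm min})$ ensures that the filtered union stabilizes.

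For part (2), the equality $\mathcal{J}(\varphi_{\rm min}) = \mathcal{J}_+(\varphi_{\rm min})$ proved in (1) is the input that makes Nadel-type vanishing applicable to nef and big divisors. One approximates $\varphi_{\rm min}$ by $(1-\eta)\varphi_{\rm min}$ plus a small strictly positively curved contribution extracted from the nef big divisor $P$; the equality $\mathcal{J} = \mathcal{J}_+$ guarantees that the multiplier ideal is unchanged under the small perturbation $(1-\eta)$, while the added positivity is enough to invoke the analytic Nadel vanishing theorem for $K_X + P + L$. This is precisely the argument carried out in \cite[1.10, 1.11]{DEL} and \cite[11.2.12 (ii)]{L2}.

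The main obstacle lies in part (1): verifying that $\min_{m \in S(L_0, h)_\sigma}\langle m, w\rangle$ is continuous in $w$ despite $S(L_0, h)_\sigma$ being unbounded. This hinges on the observation that the $\overline{\overline{\,\cdot\,}}$-closure does not alter the value of the minimum on $\sigma$, reducing the computation to the compact set of Lemma~\ref{BOX}. Once continuity is in hand, the positive-homogeneity reduction to a compact sphere and the finiteness of $A$ routinely convert the pointwise strict inequality into the uniform gap needed for $\varepsilon$-stability; then part (2) amounts to citing the known vanishing results.
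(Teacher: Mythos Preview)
Your proposal is correct and follows essentially the same approach as the paper, which offers no argument beyond citing Corollary~\ref{multip_ideals_of_hmin} together with \cite[1.10, 1.11]{DEL} and \cite[11.2.12\,(ii)]{L2}. Your compactness argument for part~(1)---reducing the minimum over $S(L_0,h)_\sigma$ to the compact set $\Box_{\rm Nef}(L_0,h)-m_\sigma$ and then restricting by homogeneity to a compact slice of ${\rm Pr}_I(\sigma)$---is precisely the natural way to extract the $\varepsilon$-stability from the combinatorial criterion, and your account of part~(2) matches the intended use of the cited references.
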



\section{Some examples}
In this section, we will introduce three examples for $X$ and $L$ in the previous sections. 
We construct them as  $\mathbb{P}^2$-bundles over abelian surfaces, by following \cite[CHAPTER IV \S 2.6]{N} basically. 
In this section, we use notations in Example \ref{example3_2}. \par
As a preparation, we first recall a useful lemma to see $L$ is big. 

\begin{lemma}\label{cutkosky}
In the setting of Example \ref{example3_2}, $L$ is big if and only if there exists a triple $(a, b, c)$ of nonnegative integers such that
$L_0^a\otimes L_1^b\otimes L_2^c$ is ample line bundle over $V$. 
\end{lemma}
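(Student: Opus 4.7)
The plan is to translate bigness of $L=\mathcal{O}_{\mathbb{P}(L_0\oplus L_1\oplus L_2)}(1)$ into an existence statement on the simplex
\[
\Delta \,=\, \{(\alpha,\beta,\gamma)\in\mathbb{R}_{\ge 0}^3 \mid \alpha+\beta+\gamma=1\}
\]
via the standard decomposition
\[
\pi_*\mathcal{O}_X(m) \,\cong\, S^m(L_0\oplus L_1\oplus L_2) \,=\, \bigoplus_{\substack{a+b+c=m \\ a,b,c\ge 0}} L_0^a\otimes L_1^b\otimes L_2^c,
\]
which yields $h^0(X,mL)=\sum h^0(V,L_0^a\otimes L_1^b\otimes L_2^c)$. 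Since $\dim X=4$, bigness of $L$ is equivalent to $h^0(X,mL)\gtrsim m^4$ for large $m$.

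For the easy direction $(\Leftarrow)$, suppose $M=L_0^{a_0}\otimes L_1^{b_0}\otimes L_2^{c_0}$ is ample with $N=a_0+b_0+c_0$. Fix a small $\varepsilon>0$, and, for each large $k$, consider the integer triples $(ka_0+\alpha,kb_0+\beta,kc_0+\gamma)$ with $\alpha+\beta+\gamma=0$ and $|\alpha|,|\beta|,|\gamma|\le\varepsilon k$: there are $\gtrsim k^2$ of them, and for each, the corresponding line bundle is numerically a relative $O(\varepsilon)$-perturbation of $kM$, hence remains ample by openness of the ample cone. Mumford's vanishing together with Riemann--Roch on the abelian surface then yields $h^0(V,L_0^a\otimes L_1^b\otimes L_2^c)=c_1(\cdots)^2/2\gtrsim k^2$ for each such summand. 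Summing gives $h^0(X,kNL)\gtrsim k^4$, so $L$ is big.

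For the harder direction $(\Rightarrow)$, assume $L$ is big. Using that on the abelian surface the only significant contribution to $h^0(V,L_0^a\otimes L_1^b\otimes L_2^c)$ comes (via Mumford's vanishing) from ample summands --- where $h^0$ equals $c_1(\cdots)^2/2$ --- and interpreting the sum $\sum_{a+b+c=m}$ as a Riemann sum on $\Delta$ via the homogeneity of the quadratic form $q(\alpha,\beta,\gamma):=(\alpha\,c_1(L_0)+\beta\,c_1(L_1)+\gamma\,c_1(L_2))^2$, one obtains
\[
\mathrm{vol}(L) \,=\, 12\int_{\Delta_{\mathrm{amp}}} q(\alpha,\beta,\gamma)\, d\mu,
\]
where $\Delta_{\mathrm{amp}}\subset\Delta$ is the open subset of points for which $\alpha L_0+\beta L_1+\gamma L_2$ is $\mathbb{R}$-ample on $V$. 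Bigness of $L$ forces this integral to be positive, hence $\Delta_{\mathrm{amp}}$ is nonempty; by density of rational points in this open set, clearing denominators at any rational point of $\Delta_{\mathrm{amp}}$ produces the desired nonneg integer triple $(a,b,c)$ with $L_0^a\otimes L_1^b\otimes L_2^c$ ample.

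The main obstacle is the identification of the relevant region of $\Delta$ with the ``ample locus'' in the $(\Rightarrow)$ step. The subtlety is that $\{q>0\}\subset NS(V)_{\mathbb{R}}$ has two components, only one of which --- the cone of positive-definite hermitian forms, by Proposition \ref{bl} --- is the ample cone; pinning down the correct component requires coupling the quadratic positivity with the abundance of \emph{effective} summands supplied by bigness of $L$, together with the abelian-surface-specific identification of the pseudo-effective, nef, and ($c_1^2>0$-assisted) ample cones.
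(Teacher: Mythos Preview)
Your argument is essentially correct but takes a substantially longer route than the paper. The paper's proof is a two-line citation: Cutkosky's criterion (recorded as \cite[Lemma~2.3.2]{L1}) already asserts that $\mathcal{O}_{\mathbb{P}(L_0\oplus L_1\oplus L_2)}(1)$ is big if and only if some $L_0^a\otimes L_1^b\otimes L_2^c$ with $a,b,c\ge 0$ is big on $V$; one then finishes by the standard fact that on a complex torus the big cone coincides with the ample cone. By contrast, you are re-deriving this special case of Cutkosky from scratch via the splitting of $\pi_*\mathcal{O}_X(m)$ and a Riemann-sum volume computation. What your approach buys is self-containment and an explicit volume formula; what it costs is several estimates that you only gesture at. Two remarks: first, the step ``only ample summands contribute significantly'' needs a genuine bound on $h^0$ for the nef summands with $q=0$ --- this is easy (tensor with a fixed ample effective $A$ and use $h^0(N)\le h^0(N+A)=(N+A)^2/2=O(m)$, then observe there are at most $O(m^2)$ such summands, giving $O(m^3)=o(m^4)$), but you should say it. Second, your final paragraph about the two components of $\{q>0\}$ is over-engineered: since you are summing $h^0$, the anti-ample component already contributes zero and the region $\Delta_{\mathrm{amp}}$ emerges automatically from Mumford vanishing without any further identification; no coupling with ``abundance of effective summands'' is needed.
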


This lemma can be easily shown by applying the result known by Cutkosky (\cite[Lemma 2.3.2]{L1}) 
and the fact that the ample cones of complex tori coincide with these big cones. \par
Let $E$ be a sufficiently general smooth elliptic curve and $o$ be a point of $E$. 
For example, you can choose $\mathbb{C}/(\mathbb{Z}+(\pi+\sqrt{-1})\mathbb{Z})$ for $E$. 
Let 
\[V=E\times E. \] 
It is known that the rank of the Neron-Severi group ${\rm NS}(V)$ of $V$ is three and this group is generated by the following three classes (\cite[Chapter 1.5.B]{L1}). 
\begin{itemize}
\item $f_1=c_1(\mathcal{O}_V({F_1}))$ , \ \ where $F_1$ stands for the prime divisor $\{o\}\times E\subset V$. 
\item $f_2=c_1(\mathcal{O}_V({F_2}))$ , \ \ where $F_2$ stands for the prime divisor $E\times \{o\}\subset V$.
\item $\ \delta\,=c_1(\mathcal{O}_V({\Delta}))$\, , \ \ \ where $\Delta$ stands for the prime divisor $\{(x, y)\in E\times E\mid x=y\}$.
\end{itemize}
By using these three classes, the nef cone ${\rm Nef}(V)$ of $V$ can be written as 
\[{\rm Nef}(V)=\{af_1+bf_2+c\delta\mid a, b, c\in\mathbb{R},\ ab+bc+ca\geq 0,\ a+b+c\geq 0\}. \]
In order to obtain more useful expression of ${\rm Nef}(V)$, let us define the other basis of ${\rm NS}(V)\otimes\mathbb{R}$ by 
\[
l_1=\frac{1}{6}(f_1+f_2-2\delta),\ 
l_2=\frac{1}{6}(-\sqrt{3}f_1+\sqrt{3}f_2),\ 
{\rm and}\ l_3=\frac{1}{6}(f_1+f_2+\delta). 
\]
By using these classes, ${\rm Nef}(V)$ can be written as
\[{\rm Nef}(V)=\{al_1+bl_2+cl_3\mid c^2\geq a^2+b^2,\ c\geq 0\}. \]
This expression of ${\rm Nef}(V)$ makes it easy to judge the nef-ness of line bundles. 

\begin{example}\label{ex1}
The first example is an example which admits a Zariski decomposition after appropriate modifications. 
Let us fix two positive integers $u<v$. 
Let 
$L_0:=\mathcal{O}_V(-uF_1-uF_2-u\Delta), 
L_1:=\mathcal{O}_V((u+v)F_1+(u+v)F_2+(-2u+v)\Delta)$, and
$L_2:=\mathcal{O}_V((-u+v)F_1+(-u+v)F_2+(2u+v)\Delta)$. 
Then 
$c_1(L_0)=-6ul_3,\ c_1(L_1)=6(ul_1+vl_3)$, and $c_1(L_2)=6(-ul_1+vl_3)$ hold. 
These expressions make it clear that the line bundle $L_1\otimes L_2$ is ample 
and, according to Lemma \ref{cutkosky}, that $L$ is a big line bundle in this case. 

The set $\Box_{\rm Nef}(L_0, h)$ in this setting is rational polyhedral. 
More precisely, $\Box_{\rm Nef}(L_0, h)$ is the convex closure of the five points 
$e^1,\ e^2,\ \frac{u}{v}e^2,\ \frac{u}{2(u+v)}e^1+\frac{u}{2(u+v)}e^2,\ \frac{u}{v}e^1$
in $M_{\mathbb{R}}$. 
So, by applying Theorem \ref{main_theorem}, it immediately turns out that 
the weight of a minimal singular metric $\psi_{\sigma_j}$ satisfies
$\psi_{\sigma_j}\sim_{\rm sing}1$ 
at any points of $X$ except for the locus $\mathbb{P}(L_0)$, and 
\begin{eqnarray}\nonumber
\psi_{\sigma_1}(x_1, x_2, z)&\sim_{\rm sing}&\frac{u}{2v(u+v)}\log \max\{|x_1|^{2(2u+2v)}, |x_2|^{2(2u+2v)}, |x_1|^{2v}|x_2|^{2v}\} \\ \nonumber
&\sim_{\rm sing}&\frac{u}{2v(u+v)}\log \left(|x_1|^{2(2u+2v)}+|x_2|^{2(2u+2v)}+|x_1|^{2v}|x_2|^{2v}\right)
\end{eqnarray}
at a point $(0, 0, z_0)\in\mathbb{P}(L_0)$. 
Therefore, it follows that the non-nef locus ${\rm NNef}(L)$ is a Zariski closed subset $\mathbb{P}(L_0)$ of $X$. \par
According to \cite[2.5]{N}, the fact that $\Box_{\rm Nef}(L_0, h)$ is a rational polyhedral yields 
that $L$ admits a Zariski decomposition after appropriate proper modifications. 
Especially, when $u$ and $v$ can be written as
\[u=1, \ v=2n-2\]
for some integer $n>1$, 
$(X, L)$ is an example which admits a Zariski decomposition just after the $n$-time blow-up centered at the non-nef locus of the pull-back of $L$. 
It can be also checked out by using the above expression of the minimal singular metric on $L$. 
\par
According to the above expression of $\Box_{\rm Nef}(L_0, h)$, the result of Corollary \ref{multip_ideals_of_hmin} can be rewritten as follows. 
First, it is clear that $\mathcal{J}(h_{\rm min}^t)$ is trivial at any point in $X\setminus\mathbb{P}(L_0)$. 
Next, for a point $x_0\in\mathbb{P}(L_0)$, the stalk of $\mathcal{J}(h_{\rm min})_{x_0}$ of the multiplier ideal sheaf at $x_0$ is the ideal of $\mathcal{O}_{X, x_0}$ which is generated by the system of the polynomials
\[\{x_1^px_2^q\mid (p+1, q+1)\in {\rm Int}(S_t)\cap\mathbb{Z}^2\}, \] 
where we denote by ${\rm Int}(S_t)$ the interior of the set 
\[S_t=\{(\langle tm, e_1\rangle, \langle tm, e_2\rangle)\in\mathbb{R}^2\mid m\in S(L_0, h)_{\sigma_1}\}. \]
For the detail shape of $S_t$, see Figure \ref{example1}. 
\begin{figure}[htbp]
  \begin{center}
\unitlength 0.1in
\begin{picture}( 22.6000, 19.7000)(  2.3000,-21.1000)
%
{\color[named]{Black}{%
\special{pn 8}%
\special{pa 400 2010}%
\special{pa 2490 2010}%
\special{fp}%
\special{sh 1}%
\special{pa 2490 2010}%
\special{pa 2424 1990}%
\special{pa 2438 2010}%
\special{pa 2424 2030}%
\special{pa 2490 2010}%
\special{fp}%
}}%
%
{\color[named]{Black}{%
\special{pn 8}%
\special{pa 400 2010}%
\special{pa 400 140}%
\special{fp}%
\special{sh 1}%
\special{pa 400 140}%
\special{pa 380 208}%
\special{pa 400 194}%
\special{pa 420 208}%
\special{pa 400 140}%
\special{fp}%
}}%
%
{\color[named]{Black}{%
\special{pn 8}%
\special{pa 400 400}%
\special{pa 800 1600}%
\special{fp}%
}}%
%
{\color[named]{Black}{%
\special{pn 8}%
\special{pa 798 1600}%
\special{pa 2018 2010}%
\special{fp}%
}}%
\put(2.3000,-4.9000){\makebox(0,0)[lb]{$\frac{u}{v}$}}%
\put(19.9000,-22.4000){\makebox(0,0)[lb]{$\frac{u}{v}$}}%
\put(4.0000,-20.0000){\makebox(0,0)[lb]{$(\frac{u}{2(u+v)}, \frac{u}{2(u+v)})$}}%
%
{\color[named]{Black}{%
\special{pn 8}%
\special{pa 2470 350}%
\special{pa 1110 1710}%
\special{fp}%
\special{pa 2470 410}%
\special{pa 1160 1720}%
\special{fp}%
\special{pa 2470 470}%
\special{pa 1200 1740}%
\special{fp}%
\special{pa 2470 530}%
\special{pa 1250 1750}%
\special{fp}%
\special{pa 2470 590}%
\special{pa 1290 1770}%
\special{fp}%
\special{pa 2470 650}%
\special{pa 1340 1780}%
\special{fp}%
\special{pa 2470 710}%
\special{pa 1380 1800}%
\special{fp}%
\special{pa 2470 770}%
\special{pa 1430 1810}%
\special{fp}%
\special{pa 2470 830}%
\special{pa 1470 1830}%
\special{fp}%
\special{pa 2470 890}%
\special{pa 1520 1840}%
\special{fp}%
\special{pa 2470 950}%
\special{pa 1560 1860}%
\special{fp}%
\special{pa 2470 1010}%
\special{pa 1610 1870}%
\special{fp}%
\special{pa 2470 1070}%
\special{pa 1650 1890}%
\special{fp}%
\special{pa 2470 1130}%
\special{pa 1700 1900}%
\special{fp}%
\special{pa 2470 1190}%
\special{pa 1740 1920}%
\special{fp}%
\special{pa 2470 1250}%
\special{pa 1790 1930}%
\special{fp}%
\special{pa 2470 1310}%
\special{pa 1830 1950}%
\special{fp}%
\special{pa 2470 1370}%
\special{pa 1880 1960}%
\special{fp}%
\special{pa 2470 1430}%
\special{pa 1920 1980}%
\special{fp}%
\special{pa 2470 1490}%
\special{pa 1970 1990}%
\special{fp}%
\special{pa 2470 1550}%
\special{pa 2010 2010}%
\special{fp}%
\special{pa 2470 1610}%
\special{pa 2070 2010}%
\special{fp}%
\special{pa 2470 1670}%
\special{pa 2130 2010}%
\special{fp}%
\special{pa 2470 1730}%
\special{pa 2190 2010}%
\special{fp}%
\special{pa 2470 1790}%
\special{pa 2250 2010}%
\special{fp}%
\special{pa 2470 1850}%
\special{pa 2310 2010}%
\special{fp}%
\special{pa 2470 1910}%
\special{pa 2370 2010}%
\special{fp}%
\special{pa 2470 1970}%
\special{pa 2450 1990}%
\special{fp}%
\special{pa 2470 290}%
\special{pa 1070 1690}%
\special{fp}%
\special{pa 2470 230}%
\special{pa 1020 1680}%
\special{fp}%
}}%
%
{\color[named]{Black}{%
\special{pn 8}%
\special{pa 2470 170}%
\special{pa 980 1660}%
\special{fp}%
\special{pa 2440 140}%
\special{pa 930 1650}%
\special{fp}%
\special{pa 2380 140}%
\special{pa 890 1630}%
\special{fp}%
\special{pa 2320 140}%
\special{pa 840 1620}%
\special{fp}%
\special{pa 2260 140}%
\special{pa 800 1600}%
\special{fp}%
\special{pa 2200 140}%
\special{pa 790 1550}%
\special{fp}%
\special{pa 2140 140}%
\special{pa 770 1510}%
\special{fp}%
\special{pa 2080 140}%
\special{pa 760 1460}%
\special{fp}%
\special{pa 2020 140}%
\special{pa 740 1420}%
\special{fp}%
\special{pa 1960 140}%
\special{pa 730 1370}%
\special{fp}%
\special{pa 1900 140}%
\special{pa 710 1330}%
\special{fp}%
\special{pa 1840 140}%
\special{pa 700 1280}%
\special{fp}%
\special{pa 1780 140}%
\special{pa 680 1240}%
\special{fp}%
\special{pa 1720 140}%
\special{pa 670 1190}%
\special{fp}%
\special{pa 1660 140}%
\special{pa 650 1150}%
\special{fp}%
\special{pa 1600 140}%
\special{pa 640 1100}%
\special{fp}%
\special{pa 1540 140}%
\special{pa 620 1060}%
\special{fp}%
\special{pa 1480 140}%
\special{pa 610 1010}%
\special{fp}%
\special{pa 1420 140}%
\special{pa 590 970}%
\special{fp}%
\special{pa 1360 140}%
\special{pa 580 920}%
\special{fp}%
\special{pa 1300 140}%
\special{pa 560 880}%
\special{fp}%
\special{pa 1240 140}%
\special{pa 550 830}%
\special{fp}%
\special{pa 1180 140}%
\special{pa 530 790}%
\special{fp}%
\special{pa 1120 140}%
\special{pa 520 740}%
\special{fp}%
\special{pa 1060 140}%
\special{pa 500 700}%
\special{fp}%
\special{pa 1000 140}%
\special{pa 490 650}%
\special{fp}%
\special{pa 940 140}%
\special{pa 470 610}%
\special{fp}%
\special{pa 880 140}%
\special{pa 460 560}%
\special{fp}%
\special{pa 820 140}%
\special{pa 440 520}%
\special{fp}%
\special{pa 760 140}%
\special{pa 430 470}%
\special{fp}%
}}%
%
{\color[named]{Black}{%
\special{pn 8}%
\special{pa 700 140}%
\special{pa 410 430}%
\special{fp}%
\special{pa 640 140}%
\special{pa 400 380}%
\special{fp}%
\special{pa 580 140}%
\special{pa 400 320}%
\special{fp}%
\special{pa 520 140}%
\special{pa 400 260}%
\special{fp}%
\special{pa 420 180}%
\special{pa 400 200}%
\special{fp}%
\special{pa 460 140}%
\special{pa 420 180}%
\special{fp}%
}}%
\end{picture}%
    \caption{The shaded area of this figure represents the set $S_1$. 
The set $S_t$ is the set of points $p\in\mathbb{R}^2$ which satisfies $\frac{p}{t}\in S_1$. }
    \label{example1}
  \end{center}
\end{figure}
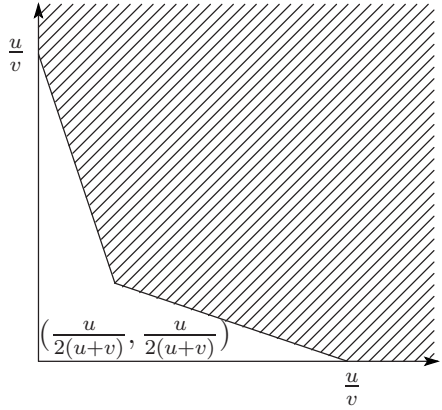

The set of the whole jumping numbers ${\rm Jump}(\psi_{\sigma_1}; x_0)$ at a point $x_0\in \mathbb{P}(L_0)$ can be written as 
${\rm Jump}(\psi_{\sigma_1}; x_0)=\left\{\left.2p+(p+q)\frac{v}{u}\right|p, q\in\mathbb{Z},\ 1\leq p\leq q\right\}$, 
and the singularity exponent $c_{x_0}(\psi_{\sigma_1})$, which is the least number in ${\rm Jump}(\psi_{\sigma_1}; x_0)$, satisfies
$c_{x_0}(\psi_{\sigma_1})=2\left(1+\frac{v}{u}\right)$. 
\end{example}

\begin{remark}\label{propertis_of_jumping_numbers_in_algebraic_cases}
In Example \ref{ex1}, the behavior of the multiplier ideal sheaf $\mathcal{J}(\psi_{\sigma_1})$ around a point of $\mathbb{P}(L_0)$ 
coincides with that of the (algebraic) multiplier ideal sheaf $\mathcal{J}(\mathfrak{a}^c)$, where $\mathfrak{a}$ is an ideal generated by $(x_1^{2(u+v)}, x_2^{2(u+v)}, x_1^vx_2^v)$ 
and $c$ is the rational number $\frac{u}{2v(u+v)}$. \par
This means that the analytic multiplier ideal sheaf $\mathcal{J}(\psi_{\sigma_1})_{x_0}$ has properties same as algebraic multiplier ideal sheaves. 
For example, it is known that, related to the algebraic multiplier ideal sheaf $\mathcal{J}(\mathfrak{a}^c)$, 
the set of the whole jumping numbers ${\rm Jump}(\mathfrak{a} ; x_0)$ is a discrete subset of the set of rational numbers $\mathbb{Q}$, and has the property so-called ``periodicity" in a sufficiently big parts of this set (see \cite[1.12]{ELSV} for details). 
Indeed, it can be easily checked that ${\rm Jump}(\psi_{\sigma_1}; x_0)$ is a discrete subset of $\mathbb{Q}$, and has a ``period" $c^{-1}=2v(1+\frac{v}{u})$. 
\end{remark}

\begin{example}\label{Nakayama}
Second example is the example found out by Nakayama (\cite{N}), 
which admits no Zariski decomposition even after modifications. \par
Let us fix an integer $a>1$ and set
$
L_0:=\mathcal{O}_V(2F_1-4F_2+2\Delta), 
L_1:=\mathcal{O}_V((a-1)F_1+(a-1)F_2+(a+2)\Delta)$
, and
$L_2:=\mathcal{O}_V((a+3)F_1+(a-3)F_2+a\Delta)$. 
Then 
$c_1(L_0)=-6(l_1+\sqrt{3}l_2),\ c_1(L_1)=6(-l_1+al_3)$, and $c_1(L_2)=6(-\sqrt{3}l_2+al_3)$ hold. 
By these expressions, it turns out that the line bundles $L_1$ and $L_2$ are ample 
and, according to Lemma \ref{cutkosky}, that $L$ is also a big line bundle in this case. 
For this example, see Section 1. 
\end{example}

\begin{example}
Finally, we introduce an example which can be proved that admits no Zariski decomposition even after modifications 
in the almost same way to the case of previous Nakayama example, 
however whose minimal singular metric can be expressed more easily. 

Let
$
L_0:=\mathcal{O}_V(4F_1+4F_2+\Delta), 
L_1:=\mathcal{O}_V$
, and 
$L_2:=\mathcal{O}_V(-F_1+9F_2+\Delta)$. 
Then 
$c_1(L_0)=6(l_1+3l_3),\ c_1(L_1)=0$, and $c_1(L_2)=6l_1+10\sqrt{3}l_2+18l_3$ hold. 
By this expression, it turns out that the line bundle $L_0$ is ample 
and, from Lemma \ref{cutkosky}, that $L$ is also a big line bundle in this case. 

The set $\Box_{\rm Nef}(L_0, h)$ in this setting is not rational, but is polyhedral. 
More precisely, $\Box_{\rm Nef}(L_0, h)$ is the convex closure of the three points 
$0,\ e^1$, and $\frac{2\sqrt{6}}{5}e^2$
in $M_{\mathbb{R}}$. 
So, applying theorem \ref{main_theorem}, it  immediately turns out that 
the weight of a minimal singular metric $\psi_{\sigma_j}$ satisfies $\psi_{\sigma_j}\sim_{\rm sing}1$ 
at any points of $X$ except for the locus $\mathbb{P}(L_2)$, and 
\begin{eqnarray}\nonumber
\psi_{\sigma_3}(x_1, x_2, z)&\sim_{\rm sing}&\log \max\{|x_0|^{2\alpha}, |x_1|^{2}\} \\ \nonumber
&\sim_{\rm sing}&\log \left(|x_0|^{2\alpha}+|x_1|^{2}\right)
\end{eqnarray}
at a point $(0, 0, z_0)\in\mathbb{P}(L_2)$, where we denote by $\alpha$ the positive irrational number $1-\frac{2\sqrt{6}}{5}$. 

According to the above expression of $\Box_{\rm Nef}(L_0, h)$, the result of Corollary \ref{multip_ideals_of_hmin} can be rewritten as follows. 
First, it is clear that $\mathcal{J}(h_{\rm min}^t)$ is trivial at any point in $X\setminus\mathbb{P}(L_2)$. 
Next, for a point $x_0\in\mathbb{P}(L_2)$, the stalk $\mathcal{J}(h_{\rm min})_{x_0}$ of the multiplier ideal sheaf at $x_0$ is the ideal of $\mathcal{O}_{X, x_0}$ which is generated by the polynomials
\[\{x_1^px_2^q\mid (p+1, q+1)\in {\rm Int}(S_t)\cap\mathbb{Z}^2\}, \] 
where we denote by $S_t$ the set $\{(\langle tm, e_1\rangle, \langle tm, e_2\rangle)\in\mathbb{R}^2\mid m\in S(L_0, h)_{\sigma_3}\}$. For the detail shape of $S_t$ in this case, see Figure \ref{example3}. 
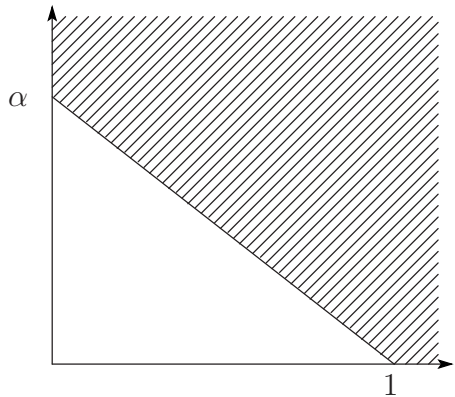
\begin{figure}[htbp]
  \begin{center}
\unitlength 0.1in
\begin{picture}( 23.2000, 19.0000)(  1.7000,-20.4000)
%
{\color[named]{Black}{%
\special{pn 8}%
\special{pa 400 2010}%
\special{pa 2490 2010}%
\special{fp}%
\special{sh 1}%
\special{pa 2490 2010}%
\special{pa 2424 1990}%
\special{pa 2438 2010}%
\special{pa 2424 2030}%
\special{pa 2490 2010}%
\special{fp}%
}}%
%
{\color[named]{Black}{%
\special{pn 8}%
\special{pa 400 2010}%
\special{pa 400 140}%
\special{fp}%
\special{sh 1}%
\special{pa 400 140}%
\special{pa 380 208}%
\special{pa 400 194}%
\special{pa 420 208}%
\special{pa 400 140}%
\special{fp}%
}}%
%
{\color[named]{Black}{%
\special{pn 8}%
\special{pa 400 610}%
\special{pa 2190 2010}%
\special{fp}%
}}%
%
{\color[named]{Black}{%
\special{pn 8}%
\special{pa 2420 700}%
\special{pa 1590 1530}%
\special{fp}%
\special{pa 2420 760}%
\special{pa 1620 1560}%
\special{fp}%
\special{pa 2420 820}%
\special{pa 1650 1590}%
\special{fp}%
\special{pa 2420 880}%
\special{pa 1690 1610}%
\special{fp}%
\special{pa 2420 940}%
\special{pa 1720 1640}%
\special{fp}%
\special{pa 2420 1000}%
\special{pa 1750 1670}%
\special{fp}%
\special{pa 2420 1060}%
\special{pa 1790 1690}%
\special{fp}%
\special{pa 2420 1120}%
\special{pa 1820 1720}%
\special{fp}%
\special{pa 2420 1180}%
\special{pa 1860 1740}%
\special{fp}%
\special{pa 2420 1240}%
\special{pa 1890 1770}%
\special{fp}%
\special{pa 2420 1300}%
\special{pa 1920 1800}%
\special{fp}%
\special{pa 2420 1360}%
\special{pa 1960 1820}%
\special{fp}%
\special{pa 2420 1420}%
\special{pa 1990 1850}%
\special{fp}%
\special{pa 2420 1480}%
\special{pa 2020 1880}%
\special{fp}%
\special{pa 2420 1540}%
\special{pa 2060 1900}%
\special{fp}%
\special{pa 2420 1600}%
\special{pa 2090 1930}%
\special{fp}%
\special{pa 2420 1660}%
\special{pa 2120 1960}%
\special{fp}%
\special{pa 2420 1720}%
\special{pa 2160 1980}%
\special{fp}%
\special{pa 2420 1780}%
\special{pa 2190 2010}%
\special{fp}%
\special{pa 2420 1840}%
\special{pa 2250 2010}%
\special{fp}%
\special{pa 2420 1900}%
\special{pa 2310 2010}%
\special{fp}%
\special{pa 2420 1960}%
\special{pa 2370 2010}%
\special{fp}%
\special{pa 2420 640}%
\special{pa 1550 1510}%
\special{fp}%
\special{pa 2420 580}%
\special{pa 1520 1480}%
\special{fp}%
\special{pa 2420 520}%
\special{pa 1490 1450}%
\special{fp}%
\special{pa 2420 460}%
\special{pa 1450 1430}%
\special{fp}%
\special{pa 2420 400}%
\special{pa 1420 1400}%
\special{fp}%
\special{pa 2420 340}%
\special{pa 1380 1380}%
\special{fp}%
\special{pa 2420 280}%
\special{pa 1350 1350}%
\special{fp}%
\special{pa 2420 220}%
\special{pa 1320 1320}%
\special{fp}%
}}%
%
{\color[named]{Black}{%
\special{pn 8}%
\special{pa 2390 190}%
\special{pa 1280 1300}%
\special{fp}%
\special{pa 2330 190}%
\special{pa 1250 1270}%
\special{fp}%
\special{pa 2270 190}%
\special{pa 1220 1240}%
\special{fp}%
\special{pa 2210 190}%
\special{pa 1180 1220}%
\special{fp}%
\special{pa 2150 190}%
\special{pa 1150 1190}%
\special{fp}%
\special{pa 2090 190}%
\special{pa 1110 1170}%
\special{fp}%
\special{pa 2030 190}%
\special{pa 1080 1140}%
\special{fp}%
\special{pa 1970 190}%
\special{pa 1050 1110}%
\special{fp}%
\special{pa 1910 190}%
\special{pa 1010 1090}%
\special{fp}%
\special{pa 1850 190}%
\special{pa 980 1060}%
\special{fp}%
\special{pa 1790 190}%
\special{pa 950 1030}%
\special{fp}%
\special{pa 1730 190}%
\special{pa 910 1010}%
\special{fp}%
\special{pa 1670 190}%
\special{pa 880 980}%
\special{fp}%
\special{pa 1610 190}%
\special{pa 850 950}%
\special{fp}%
\special{pa 1550 190}%
\special{pa 810 930}%
\special{fp}%
\special{pa 1490 190}%
\special{pa 780 900}%
\special{fp}%
\special{pa 1430 190}%
\special{pa 740 880}%
\special{fp}%
\special{pa 1370 190}%
\special{pa 710 850}%
\special{fp}%
\special{pa 1310 190}%
\special{pa 680 820}%
\special{fp}%
\special{pa 1250 190}%
\special{pa 640 800}%
\special{fp}%
\special{pa 1190 190}%
\special{pa 610 770}%
\special{fp}%
\special{pa 1130 190}%
\special{pa 580 740}%
\special{fp}%
\special{pa 1070 190}%
\special{pa 540 720}%
\special{fp}%
\special{pa 1010 190}%
\special{pa 510 690}%
\special{fp}%
\special{pa 950 190}%
\special{pa 480 660}%
\special{fp}%
\special{pa 890 190}%
\special{pa 440 640}%
\special{fp}%
\special{pa 830 190}%
\special{pa 410 610}%
\special{fp}%
\special{pa 770 190}%
\special{pa 400 560}%
\special{fp}%
\special{pa 710 190}%
\special{pa 400 500}%
\special{fp}%
\special{pa 650 190}%
\special{pa 400 440}%
\special{fp}%
}}%
%
{\color[named]{Black}{%
\special{pn 8}%
\special{pa 590 190}%
\special{pa 400 380}%
\special{fp}%
\special{pa 530 190}%
\special{pa 400 320}%
\special{fp}%
\special{pa 470 190}%
\special{pa 400 260}%
\special{fp}%
}}%
\put(1.7000,-6.6000){\makebox(0,0)[lb]{$\alpha$}}%
\put(21.3000,-21.7000){\makebox(0,0)[lb]{$1$}}%
\end{picture}%
    \caption{The shaded area of this figure represents the set $S_1$. 
The set $S_t$ is the set of points $p\in\mathbb{R}^2$ which satisfies $\frac{p}{t}\in S_1$. }
    \label{example3}
  \end{center}
\end{figure}
\par
Let $x_0$ be a point in $\mathbb{P}(L_2)$. 
In this case, ${\rm Jump}(\psi_{\sigma_3}; x_0)$ can be calculated that 
${\rm Jump}(\psi_{\sigma_3}; x_0)=\mathbb{Z}_{>0}\oplus\frac{1}{\alpha}\cdot\mathbb{Z}_{>0}$, 
and the singularity exponent can be calculated that 
$c_{x_0}(\psi_{\sigma_1})=1+\frac{1}{\alpha}$, 
which is not rational, too. 
It can easily be proved by using (\cite[2.11]{N}) that $L$ admits no Zariski decomposition even after modifications in this settings. 

\end{example}


\end{document}